\numberwithin{equation}{section}
\def\namedlabel#1#2{\begingroup
    #2%
    \def\@currentlabel{#2}%
    \phantomsection\label{#1}\endgroup
}
\newtheorem{Theorem}{Theorem}[section]
\newtheorem{Remark}{Remark}[section]
\newtheorem{Corollary}{Corollary}[section]
\newtheorem{Definition}{Definition}[section]
\newtheorem{Lemma}{Lemma}[section]
\newcommand{\mbb}{\mathbb}
\newcommand{\mbf}{\mathbf}
\newcommand{\mcal}{\mathcal}
\newcommand{\E}{\mathbf{E}}
\newcommand{\Nb}{\mathbf{N}}
\newcommand{\cX}{\mathcal{X}}
\newcommand{\xd}{\tilde}
\newcommand{\XD}{\widetilde}
\newcommand{\xde}{\check}
\newcommand{\XDE}{\widecheck}
\newcommand{\X}{{\mathbb X}}
\DeclareMathOperator{\Var}{Var}
\def\Biggiven{\,\Big{|}\,}
\def\tr{\mathop{\text{tr}}\kern.2ex}
\def\tV{{\tilde V}}
\def\tW{{\tilde W}}
\def\E{{\mathrm E}}
\def\d{{\mathrm d}}
\title[Approximation Bounds for ATE Estimators]{\sf \bf Gaussian and Bootstrap Approximation for Matching-based Average Treatment Effect Estimators}
\author[Z. Shi]{Zhaoyang Shi}
\address{Department of Statistics, Harvard University.}
\email{zshi@fas.harvard.edu}
\author[C. Bhattacharjee]{Chinmoy Bhattacharjee}
\address{Department of Mathematics, University of Hamburg.}
\email{chinmoy.bhattacharjee@uni-hamburg.de}
\author[K. Balasubramanian]{Krishnakumar Balasubramanian}
\address{Department of Statistics, University of California, Davis.}
\email{kbala@ucdavis.edu}
\author[W. Polonik]{Wolfgang Polonik}
\address{Department of Statistics, University of California, Davis.}
\email{wpolonik@ucdavis.edu}
\begin{document}

\begin{abstract}
We establish Gaussian approximation bounds for covariate and rank-matching-based Average Treatment Effect (ATE) estimators. By analyzing these estimators through the lens of stabilization theory, we employ the Malliavin-Stein method to derive our results. Our bounds precisely quantify the impact of key problem parameters, including the number of matches and treatment balance, on the accuracy of the Gaussian approximation. Additionally, we develop multiplier bootstrap procedures to estimate the limiting distribution in a fully data-driven manner, and we leverage the derived Gaussian approximation results to further obtain bootstrap approximation bounds. Our work not only introduces a novel theoretical framework for commonly used ATE estimators, but also provides data-driven methods for constructing non-asymptotically valid confidence intervals. 
\end{abstract}

\maketitle

\section{Introduction}

Nearest neighbor matching estimators are non-parametric methods in causal inference used to estimate treatment effects by comparing \emph{treated} and \emph{untreated} units that are most similar in observable characteristics. This approach is commonly applied in observational studies where random assignment is not possible, necessitating statistical methods to estimate counterfactual outcomes (i.e., what would have happened to a treated unit if it had not received treatment). Matching involves pairing each treated unit with one or more untreated units that closely resemble it based on these characteristics, creating a comparison group that approximates the treated group but without the treatment. This process helps estimate the treatment effect by minimizing confounding due to observed differences. In particular, the aforementioned procedure is used to calculate the Average Treatment Effect (ATE) which reflects the treatment's effect across the entire population, including those who did and did not receive the treatment. Such ATE estimators have been widely used in various fields~\citep{imbens2004nonparametric,morgan2006matching,rosenbaum2010design,stuart2010matching}.

In two seminal works, \cite{abadie2006large,abadie2011bias} proposed ATE estimators based on nearest neighbor matching and provided their first asymptotic analysis. In particular~\cite{abadie2006large} showed that their proposed estimator has non-negligible bias if the dimension is greater than one. As a remedy,~\cite{abadie2011bias} proposed a bias-correction procedure and established asymptotic properties (including asymptotic normality) under the crucial assumption that the number of matches is fixed. More recently, \cite{lin2023estimation} established the consistency and asymptotic normality for matching-based ATE estimators allowing the number of matches to diverge with the number of observations. Through their analysis, they further showed that the estimator is doubly robust and semiparametrically efficient. 


Our primary objective in this work is to derive precise Gaussian approximation results for nearest-neighbor matching-based ATE estimators. Existing asymptotic normality results, commonly used for constructing confidence intervals, have significant limitations. In particular, they provide no information on when Gaussianity ``kicks in'', making the resulting confidence intervals valid only asymptotically. In addition, key parameters, such as the number of matches and the balance between treatment groups, become obscured in asymptotics. For instance, the rate at which the number of matches increases with the sample size directly impacts the accuracy of the Gaussian approximation, and consequently, the validity of the constructed confidence intervals. Establishing a fine-grained Gaussian approximation bound in this context would allow us to quantify these effects non-asymptotically, improving the reliability of the confidence intervals for the practitioner.



To this end, we introduce a novel approach for quantifying the accuracy of Gaussian approximations in matching-based ATE estimators. Our method combines stabilization theory with the \emph{Malliavin-Stein} method. A key observation for our analysis is that the leading term in the bias-corrected ATE estimator (specifically, the term \( E_n \) defined at \eqref{eq2:F}) exhibits a local geometric property termed as \emph{stabilization}, as illustrated in detail in Section \ref{sec:ATEstabilization} later. To the best of our knowledge, only~\cite{shi2022berry} has previously applied Stein's method in the context of causal inference problems. However, their work focused on leveraging classical results from the Stein's method literature on permutational statistics to derive Berry–Esseen bounds for design-based causal inference.

By leveraging the aforementioned insight and by refining the Gaussian approximation results in \cite{lachieze2019normal} (see our Theorem \ref{rateswithconstant} specifically), we present in Theorem \ref{thm:covariate} the first Gaussian approximation result for the bias-corrected matching-based ATE estimator. This result explicitly quantifies the approximation’s accuracy based on key parameters, including the number of matches and the treatment group balance. For example, a consequence of our result in the one-dimensional setting with balanced data, gives the 
 Gaussian approximation upper bound $M^5n^{-\frac{1}{2}}+M^{-\frac{1}{2}}$, where $M$ is the number of nearest neighbor matches and $n$ is the number of observations; see Corollary~\ref{coro:covariate} and~\eqref{dim1} for details. Similarly, in Theorem \ref{thm:rank}, we establish Gaussian approximation results for the \(\phi\)-transformed rank-based ATE estimator proposed and analyzed in~\cite{cattaneo2023rosenbaum}. 

On a more technical note, another contribution of our work is a refinement on Theorems B.3 and B.4 in \citet{lin2023estimation}. In this context, we derive a mathematically rigorous, fully non-asymptotic bound for the estimation error of the nearest-neighbor-based density ratio, as presented in Lemma \ref{lemma:l2convdensityratio}. In contrast, the error bounds in \cite{lin2023estimation} included asymptotic simplifications tailored to their purpose without providing a fully non-asymptotic expression. 

As an application of our main results, we analyze a multiplier bootstrap method to estimate the limiting distribution and establish bootstrap approximation rates for both the covariate-based and rank-based ATE estimators in Theorem \ref{thm:boot}. Notably, our bootstrap approximation results allow the number of matches to increase with the sample size. This contrasts with the findings in~\cite{abadie2008failure}, which demonstrate that the naive bootstrap procedure is inconsistent when the number of matches remains fixed. Our results, on the other hand, specify the exact rate at which the number of matches can diverge with the sample size for the multiplier bootstrap method to remain consistent. 

During the final stages of preparing this manuscript, we became aware of a concurrent work by~\cite{lin2024consistency} that establishes consistency results for the naive bootstrap procedure when the number of matches is allowed to grow with the sample size. We would like to point out three significant distinctions between this and our current work: (i) they employ the naive bootstrap procedure, similar to~\cite{abadie2008failure}, (ii) they do not provide rates for bootstrap approximation, and (iii) their proof techniques differ fundamentally, being more canonical, whereas our approach relies on the Malliavin-Stein method and stabilization theory.

\section{Notation}
Throughout the paper, we will use the following notation and conventions.
\begin{itemize}
    \item For an integer $n>0$, $[n]:=\{1,2,\ldots,n\}$.
    \item $\mathds{1}(A)$: the indicator function of $A$.
    \item $\text{Bin}(n,p)$: binomial random variable with parameters $n$ and $p$.
    \item $\mcal{N}(a,b)$: normal random variable with mean $a$ and variance $b$; When $a=0$ and $b=1$, we simply use $\mcal{N}$ to denote a standard normal random variable.
    \item For any two real sequences $\{a_n\}$ and $\{b_n\}$, we write $a_n\lesssim b_n$ (or equivalently, $b_n\gtrsim a_n$, $a_n=O(b_n)$) if there exists a constant $C>0$ such that $a_n\le Cb_n$ for $n$ large enough. We also write $a_n\asymp b_n$ if $a_n\lesssim b_n$ and $a_n\gtrsim b_n$.
    \item $d(\cdot,\cdot)$ represents the Euclidean distance in $\mbb{R}^{m}$ and $B(x,r)$ denotes the closed Euclidean ball centered at $x$ with radius $r$. For $m \in {\mathbb N}$, we let $V_m$ be the volume of the  unit ball in ${\mathbb R}^m$.
    \item $\X \subseteq \mbb{R}^m$ with $m \in \mbb{N}$ represents the support of the covariate $X$ in our model.
    \item For a set $A \subseteq \mbb{R}^m$, we denote $\XD{A}:=A\times \{0,1\}$. The set $\XD{\X}$ represents the support of the covariate pair $(X,D)$.
    \item For a set $A \subseteq \mbb{R}^m$, we denote $\XDE{A}:=A\times \{0,1\}\times \mbb{R}$. The set   $\XDE{\X}$ represents the support of the triplet $(X,D,\bm{\varepsilon})$.
    \item $\xd{x}:=(x,d)$ and $\xde{x}:=(x,d,\varepsilon)$ represent elements in $\XD{\X}$ and $\XDE{\X}$, respectively.
    \item $\cX_n$, $\XD{\cX}_n$ and $\XDE{\cX}_n$ stand for the (marked) point collections $\{X_{i}\}_{i=1}^{n}$, $\{\XD{X}_i\}_{i=1}^{n}$ and $\{\XDE{X}_i\}_{i=1}^{n}$, respectively.
    \item $\mbb{Q}$: the probability measure associated to the distribution of $X$; $\XDE{\mbb{Q}}$: the joint probability distribution of the triplet $(X,D,\bm{\varepsilon})$.
    \item Throughout the paper, $C$ stands for a generic finite positive constant whose actual value may vary from line to line in our computations. We do this to simplify many expressions, where the constants do not depend on the parameters of interest to us. For such parameters (such as $\eta,n$ and $M$), say $\Delta$, we specifically write $C(\Delta)>0$ to denote a finite positive constant that depends on $\Delta$.
\end{itemize}

\section{Matching-based Average Treatment Effect Estimators}

Following the framework by \cite{abadie2006large,abadie2011bias}, we are interested in estimating the average treatment effect (ATE) on outcomes in a binary treatment experiment. Consider $(X,Y,D) \in \mathbb{X} \times \mathbb{R} \times \{0,1 \}$, where $X$ corresponds to the unit, $Y$ corresponds to the response, and $D$ is the binary variable (possibly dependent on $X$) such that $D=1$ and $D=0$ corresponds to when the unit $X$ belongs to the treatment and the control groups, respectively. Under the framework of \cite{rubin1974estimating} (also see \cite{rosenbaum1995observationalstudies,imbens2009recent}), $X$ has two potential outcomes, $Y(0)$ and $Y(1)$, depending on whether $D=0$ or $D=1$, but we observe only one of them. In other words, $Y=DY(1)+(1-D)Y(0)$. The central goal is to estimate the population ATE defined as
\begin{equation}\label{eq:ATE}
    \tau=\mbb{E}\left(\mbb{E}(Y|X,D=1)-\mbb{E}(Y|X,D=0)\right),
\end{equation}
given observations $\{(X_i,Y_i,D_i)\}_{i=1}^{n}$ that are assumed to be independent and identically distributed copies of $(X,Y,D)$.

\subsection{Covariate-based matching}\label{sec:covariateATE}
We first discuss covariate-based matching estimators for ATE. Let $n_1=\sum_{i=1}^{n}D_i$ and $n_0=\sum_{i=1}^{n}(1-D_i) = n-n_1$ denote the number of treated and control units, respectively. Note here that while $n$ is a deterministic variable, $n_0$ and $n_1$ are \emph{random variables} depending on the specific instance of $\{D_i\}_{i=1}^n$.
For a point collection $\xd{\nu}_n=\{\xd{x}_{i} \in \XD{\X} : i \in [n]\}$, $\omega\in \{0,1\}$ and an integer $M>0$, we denote by $\mcal{J}^{\omega}_{M}(i, \xd{\nu}_n)$ the index set of $M$-NNs of ${x}_i$ within the set $\nu_n^\omega :=\{x \in \X : (x,\omega) \in \xd{\nu}_n\}$, namely, the set of all indices $j\in[n]$ such that $d_j=\omega$ and
\begin{align*}
    \sum_{l=1,d_{l}=\omega}^{n}\mathds{1}(d(x_l,x_i)\le d(x_j,x_i))\le M.
\end{align*}

Since only one of the potential outcomes $Y_i(0)$ and $Y_{i}(1)$ is observed, we impute the missing potential outcomes (see \cite{abadie2006large}) via nearest neighbor (NN) matching as
\begin{align*}
    \hat{Y}_{i}(0)=\left\{
    \begin{aligned}
        &Y_{i},\ \qquad\qquad \text{if}\ D_i=0;\\
        &\frac{1}{M}\sum_{j\in\mcal{J}^{0}_{M}(i,\XD{\cX_n})}Y_j,\ \text{if}\ D_{i}=1,
    \end{aligned}
    \right.
\end{align*}
and
\begin{align*}
    \hat{Y}_{i}(1)=\left\{
    \begin{aligned}
        &\frac{1}{M}\sum_{j\in\mcal{J}^{1}_{M}(i,\XD{\cX_n})}Y_j,\ \text{if}\ D_{i}=0;\\
        &Y_{i},\ \qquad\qquad \text{if}\ D_i=1.
    \end{aligned}
    \right.
\end{align*}
Then, the matching-based estimator for ATE is defined as the empirical counterpart of \eqref{eq:ATE} based on the $\hat Y_i(0)$ and $\hat Y_i(1)$, i.e., 
$$\hat \tau_M := \frac{1}{n}\sum_{i=1}^n \hat Y_i(1) - \frac{1}{n}\sum_{i=1}^n \hat Y_i(0).
$$

Let $K^{\omega}_{M}(i,\xd{\nu}_n)$ be the matched times for unit $i$ with $d_i=\omega \in \{0,1\}$, i.e.,
\begin{align}\label{matchedtimes}
    K^{\omega}_{M}(i,\xd{\nu}_n)=\sum_{j=1,d_j=1-\omega}^{n}\mathds{1}(i\in \mcal{J}^{\omega}_{M}(j,\xd{\nu}_n)).
\end{align}
In other words, $K^{\omega}_{M}(i,\xd{\nu}_n)$ denotes the total number of units $j$ in $\xd{\nu}_n$ with $d_j=1-\omega$ (i.e. with the opposite label to $x_i$) such that $x_i$ is one of its $M$-NNs in $\nu_n^\omega$. Then $\hat{\tau}_{M}$ can be further expanded as
\begin{align*}
    \hat{\tau}_{M} =\frac{1}{n}\bigg(\sum_{i=1,D_i=1}^{n}\bigg(1+\frac{K_{M}^{1}(i,\XD{\cX}_n)}{M}\bigg)Y_i-\sum_{i=1,D_i=0}^{n}\bigg(1+\frac{K_{M}^{0}(i,\XD{\cX}_n)}{M}\bigg)Y_i\bigg).
\end{align*}

The estimator $\hat{\tau}_{M}$ however suffers from an asymptotically non-negligible bias when the dimension $m$ is strictly larger than $1$ (\cite{abadie2006large}). To circumvent this, in a follow-up work, \cite{abadie2011bias} proposed a bias-corrected version $\hat{\tau}_{M}^{bc}$ (see \eqref{eq:F} below), defined as follows. Consider the regression model $Y_i=\mu_{D_i}(X_i)+\bm{\varepsilon}_{i}$, $i\in[n]$, define conditional expectations 
\begin{align*}
    \mu_0(x):=\mbb{E}(Y|X=x,D=0),\quad \mu_1(x):=\mbb{E}(Y|X=x,D=1),
\end{align*}
and let $\hat{\mu}_{0}(x)$ and $\hat{\mu}_{1}(x)$, respectively, be corresponding regression estimators. The estimator $\hat{\tau}_{M}^{bc}$ is then obtained by replacing the $\hat Y_i(0)$ and $\hat Y_i(1)$ in $\hat \tau_M$ by the corrected quantities $\hat Y_i(0) + \hat \mu_0(X_i) - \hat \mu_1(X_i)$ and $\hat Y_i(1) + \hat \mu_1(X_i) - \hat \mu_0(X_i),$ respectively. Writing the residuals as 
\begin{align*}
    \hat{R}_{i}:=Y_i-\hat{\mu}_{D_{i}}(X_i),\ i\in [n],
\end{align*}
and denoting the regression based estimator of the population ATE as
\begin{align*}
    \hat{\tau}_{\text{reg}}:=\frac{1}{n}\sum_{i=1}^{n}(\hat{\mu}_{1}(X_i)-\hat{\mu}_{0}(X_i)),
\end{align*}
the bias corrected estimator of ATE can be expressed as
\begin{align}\label{eq:F}
\hat{\tau}_{M}^{\text{bc}}:=\hat{\tau}_{\text{reg}}+\frac{1}{n}\bigg(\sum_{i=1,D_i=1}^{n}\bigg(1+\frac{K^{1}_{M}(i,\XD{\cX}_n)}{M}\bigg)\hat{R}_{i}-\sum_{i=1,D_i=0}^{n}\bigg(1+\frac{K^{0}_{M}(i,\XD{\cX}_n)}{M}\bigg)\hat{R}_{i}\bigg).
\end{align}

\cite{lin2023estimation} showed that the estimator \eqref{eq:F} is indeed \emph{doubly robust}. Consequently, it should also enjoy all the desirable properties of doubly robust estimators \citep[see][]{scharfstein1999adjusting,bang2005doubly}. Moreover, following \cite{lin2023estimation}, the estimator \eqref{eq:F} can be conveniently decomposed as
\begin{align}\label{eq2:F}
\begin{aligned}
    \hat{\tau}_{M}^{\text{bc}}&=\frac{1}{n}\sum_{i=1}^{n}(\hat{\mu}_{1}(X_i)-\hat{\mu}_{0}(X_i))+\frac{1}{n}\sum_{i=1}^{n}(2D_i-1)\bigg(1+\frac{K^{D_i}_{M}(i,\XD{\cX}_n)}{M}\bigg)(Y_i-\hat{\mu}_{D_i}(X_i))\\
    &=\frac{1}{n}\sum_{i=1}^{n}(\mu_{1}(X_i)-\mu_{0}(X_i))+\frac{1}{n}\sum_{i=1}^{n}(2D_i-1)\bigg(1+\frac{K^{D_i}_{M}(i,\XD{\cX}_n)}{M}\bigg)\bm{\varepsilon}_i+(B_{M}-\hat{B}_{M})\\
    &=:E_{n}+(B_{M}-\hat{B}_{M}),
\end{aligned}
\end{align}
where $\bm{\varepsilon}_i$ are the errors in our regression model, and 
\begin{align*}
    B_M&:=\frac{1}{n}\sum_{i=1}^{n}(2D_i-1)\bigg(\frac{1}{M}\sum_{m=1}^{M}(\mu_{1-D_i}(X_i)-\mu_{1-D_i}(X_{j_{m}^{1-D_i}(i,\XD{\cX}_n)}))\bigg),\\
    \hat{B}_{M}&:=\frac{1}{n}\sum_{i=1}^{n}(2D_i-1)\bigg(\frac{1}{M}\sum_{m=1}^{M}(\hat{\mu}_{1-D_i}(X_i)-\hat{\mu}_{1-D_i}(X_{j_{m}^{1-D_i}(i,\XD{\cX}_n)}))\bigg),
\end{align*}
with $j_{m}^{\omega}(i,\XD{\cX}_n)$ denoting the $m$-th nearest neighbor of the point $X_i$ in $\{X_j:D_j=\omega\}_{j=1}^{n}$ for $m\in[n]$. Here, $E_n$ can be viewed as the main contributing term and $(B_{M}-\hat{B}_{M})$ as the bias term. In Theorem \ref{thm:covariate} and Corollary \ref{coro:covariate} in Section \ref{sec:mainresult}, using stabilization theory and Malliavin-Stein method, we provide a quantitative estimate for the error in the Gaussian approximation of $\hat{\tau}_{M}^{\text{bc}}$ (appropriately centered and scaled).

\subsection{Rank-based matching}\label{sec:rank}

The above covariate-based matching uses the Euclidean distance for determining the nearest neighbor matching. It may however exhibit sensitivity to alterations in scale and to the existence of outliers or heavy-tailed distributions. Also, in practice distance metrics are often derived from a `standardized' representation of the data, and the selection of a metric is an important factor in causal inference because different metrics can lead to different conclusions (\cite{rosenbaum2010design}, chapter 9). Therefore, in two influential contributions, \cite{rosenbaum2005exact,rosenbaum2010design} advocated for using the distances between component-wise ranks, instead of the original data, to measure covariate similarity when constructing matching estimators of average treatment effects. This approach is called Rosenbaum’s rank-based matching estimator for ATE.

Compared to the covariate-based matching ATE estimator, Rosenbaum’s rank-based matching estimator is obtained by replacing the original values of the $X_i$'s with their component-wise ranks when performing nearest neighbor matching. The detailed construction is as follows.
    \vspace{0.1in}
    
\textbf{Step 1.} Write $X_i=(X_{i,1},\ldots,X_{i,m})^{T}$ for $i\in [n]$. Define the vector of the marginal empirical cumulative distribution functions, $\widehat{\mbf{F}}_{n}:\mbb{R}^{m}\rightarrow [0,1]^{m}$, as follows: for any $x=(x_1,\ldots,x_m)^{T}\in\mbb{R}^{m}$,
\begin{align*}
    \widehat{\mbf{F}}_{n}(x):=(\widehat{F}_{n,1}(x_1),\ldots,\widehat{F}_{n,m}(x_m))^{T},\ \text{with}\ \widehat{F}_{n,k}(x_k):=\frac{1}{n}\sum_{i=1}^{n}\mathds{1}(X_{i,k}\le x_k), k\in[m].
\end{align*}

For each $i\in[n]$, define $\hat{L}_{i}:=\widehat{\mbf{F}}_{n}(X_i)$ and note that for $k \in [m]$, the $k$-th component of $n\hat{L}_{i}$ is the corresponding rank of $X_{i,k}$ among $\{X_{j,k}\}_{j=1}^{n}$ (with ties broken arbitrarily). Also, let $\mbf{F}:\mbb{R}^{m}\rightarrow [0,1]^{m}$ be the vector of marginal population cumulative distribution functions, i.e. for any $x=(x_1,\ldots,x_m)^{T}\in\mbb{R}^{m}$,
\begin{align*}
    \mbf{F}(x):=(F_{1}(x_1),\ldots,F_{m}(x_m))^{T},\ \text{with}\ F_{k}(x_k):=\mbb{P}(X_{1,k}\le x_k), k\in[m].
\end{align*}
Write $L=\mbf{F}(X)$ with $L_i:=\mbf{F}(X_i)$, for $i\in[n]$.
    \vspace{0.1in}
    
\textbf{Step 2.} Similar to the covariate-based matching, regression adjustment is employed to correct the bias. Let $\hat{\mu}_{r,0}(\cdot)$ and $\hat{\mu}_{r,1}(\cdot)$ be estimators of the conditional means
\begin{align*}
    \mu_{r,0}(l):=\mbb{E}(Y|L=l,D=0),\quad \mu_{r,1}(l):=\mbb{E}(Y|L=l,D=1),
\end{align*}
respectively.
    \vspace{0.1in}
    
\textbf{Step 3.} The rank-based ATE estimator $\hat{\tau}_{r,M}^{bc}$ now is constructed by applying bias-correction and matching to $\{(\hat{L}_i,D_i,Y_i)\}_{i=1}^{n}$: Let $\mcal{J}_{r,M}(i)$ denote the index set of $M$-NNs of $\hat{L}_{i}$ in $\{\hat{L}_{j}:D_{j}=1-D_{i}\}_{j=1}^{n}$ with ties broken in some arbitrary way. The rank-based ATE estimator $\hat{\tau}_{r,M}^{bc}$ is then defined as
\begin{align}\label{rankATEest}
    \hat{\tau}_{r,M}^{bc}:=\frac{1}{n}\sum_{i=1}^{n}(\hat{Y}_i(1)-\hat{Y}_i(0)),
\end{align}
where, for $\omega\in\{0,1\}$,
\begin{align*}
    \hat{Y}_i(\omega):=\left\{
    \begin{aligned}
        &\frac{1}{M}\sum_{j\in\mcal{J}_{r,M}(i)}(Y_j+\hat{\mu}_{\omega}(\hat{L}_{i})-\hat{\mu}_{\omega}(\hat{L}_{j})),\ \text{if}\ D_{i}=1-\omega;\\
        &Y_{i},\ \qquad\ \qquad\qquad\quad\ \qquad\qquad\quad \quad\text{if}\ D_i=\omega.
    \end{aligned}
    \right.
\end{align*}

This can be further generalized by considering a functional transform of the data, as considered in \cite{cattaneo2023rosenbaum}. For the sake of completeness, we also explain this general case below. For $\omega\in\{0,1\}$, consider functions $\phi_{\omega}:\mbb{X}\rightarrow\mbb{X}_{\phi}$ with ${X}_{\phi}\subseteq \mbb{R}^{m'}$ for some $m'\in \mbb{N}$. Note here that $m'$ can indeed be different from $m$. Then, for possibly unknown $\phi_{\omega}$, let $\hat{\phi}_{0}$ and $\hat{\phi}_{1}$ be generic estimators based on the sample $\{(X_i,D_i,Y_i)\}_{i=1}^{n},$  and define

\begin{align*}
    L_{\phi,\omega}:=\phi_{\omega}(X),\ \text{and}\ \hat{L}_{\phi,\omega,i}:=\hat{\phi}_{\omega}(X_i),\ i\in[n].
\end{align*}
Note that when $\phi_0=\phi_1=\mbf{F}$ and $\hat{\phi}_{0}=\hat{\phi}_{1}=\hat{\mbf{F}}_n$, it recovers $L$ and $\hat{L}_i$ from \textbf{Step 1} above. Let $\mcal{J}_{\phi,M}(i)$ represent the index set of $M$-NN matches of $\hat{L}_{\phi,1-D_i,i}$ in $\{\hat{L}_{\phi,1-D_i,j}:D_j=1-D_i\}_{j=1}^{n}$ with ties broken in an arbitrary way. In other words, for determining the nearest
neighbors, this approach measures the similarity based on the Euclidean distance between transformed data points with the transformation function possibly also needing to be learned from the
same data. Let $K_{\phi}(i)$ stand for the number of matched times for the unit $i$, i.e.,
\begin{align}\label{def:kphii}
    K_{\phi}(i):=\sum_{j=1,D_j=1-D_i}^{n}\mathds{1}(i\in\mcal{J}_{\phi,M}(j)).
\end{align}
Moreover, let $\hat{\mu}_{\phi,\omega}(l)$ be  mappings from $\mbb{X}_{\phi}$ to $\mbb{R}$ that estimate the conditional means
\begin{align*}
    \mu_{\phi,\omega}(l):=\mbb{E}(Y|L_{\phi,\omega}=l,D=\omega)).
\end{align*}
The general $\phi$-transformed rank-based bias-corrected matching estimator $\hat{\tau}_{\phi,M}^{bc}$ is then given by
\begin{align}\label{phirankATEest}
    \hat{\tau}_{\phi,M}^{bc}:=\frac{1}{n}\sum_{i=1}^{n}(\hat{Y}_{\phi,i}(1)-\hat{Y}_{\phi,i}(0)),
\end{align}
where, for $\omega\in\{0,1\}$,
\begin{align*}
    \hat{Y}_{\phi,i}(\omega):=\left\{
    \begin{aligned}
        &\frac{1}{M}\sum_{j\in\mcal{J}_{\phi,M}(i)}(Y_j+\hat{\mu}_{\phi,\omega}(\hat{L}_{\phi,\omega,i})-\hat{\mu}_{\phi,\omega}(\hat{L}_{\phi,\omega,j})),\ \text{if}\ D_{i}=1-\omega;\\
        &Y_{i},\ \qquad\ \qquad\qquad\quad\ \qquad\qquad\qquad \text{if}\ D_i=\omega.
    \end{aligned}
    \right.
\end{align*}
Note again that when $\phi_0=\phi_1=\mbf{F}$ and $\hat{\phi}_{0}=\hat{\phi}_{1}=\hat{\mbf{F}}_n$, \eqref{phirankATEest} indeed recovers \eqref{rankATEest}. In the general setting, we provide a Gaussian approximation bound for $\hat{\tau}_{\phi,M}^{bc}$ in Theorem \ref{thm:rank} and Corollary \ref{coro:cdf} in Section \ref{sec:mainresult}.

\section{Assumptions}
In this paper, we work under the standard assumptions put forward in prior works by \cite{abadie2011bias}, \cite{lin2023estimation} and \cite{cattaneo2023rosenbaum}. We refer the reader to these works for further motivations for these assumptions. We do however make a few minor modifications to the assumptions, as required for our stabilization-based Gaussian approximation techniques.

\subsection{Covariate-based matching}\label{sebsec41}
For the covariate-based matching, we assume the following two sets of conditions.
\subsubsection{\bf Assumption set A}\label{AssumpA}
(Data Distribution)
\begin{itemize}
    \item[(1)] $X$ is supported on a compact, convex set $\mbb{X}\subset\mbb{R}^{m}$.
    \vspace{0.1in}
    \item [(2)] The distribution of $X$ is absolutely continuous w.r.t.\ the Lebesgue measure and its density $g$ is uniformly bounded from below and above, i.e., $0<g_{\min}\le g\le g_{\max}<\infty$ on $\mbb{X}$.
    \vspace{0.1in}
    \item [(3)] For almost all $x\in\mbb{X}$, $D$ is independent of $(Y(0),Y(1))$ conditional on $X=x$, and there exists a constant $\eta\in  (0,1/2]$ such that
    \begin{align*}
        \eta\le \mbb{P}(D=1|X=x)\le 1-\eta.
    \end{align*}
    \item [(4)] Denote by $g_0$ and $g_1$ the conditional densities of $X|D=0$ and $X|D=1$ with supports $S_0$ and $S_1$ (subsets of $\mbb{X}$), respectively. Both, $g_0$ and $g_1$ satisfy a Lipschitz-type condition, namely, for all $x,z\in S_0$ or all $x,z\in S_1$, $(|g_0(z)-g_0(x)|\vee |g_1(z)-g_1(x)|)\le L\|x-z\|$ for some constant $L>0$. They are uniformly bounded from above and below, i.e., for $i=0,1$, $0<g_{i,\min}\le g_{i}\le g_{i,\max}<\infty$ on $S_i$. We define the maximum of the `within' density ratios $r_{\text{ratio}}:=\max_{i=0,1}\left(\frac{g_{i,\min}}{g_{i,\max}}\right)$.
    \vspace{0.1in}
    \item [(5)] There exists a constant $0<a<1$ such that for any $0<\delta\le \text{diam}(S_0)$ and any $z\in S_1$, 
    \begin{align*}
        \lambda(B(z,\delta)\cap S_0)\ge a\lambda(B(z,\delta)),
    \end{align*}
    and for any $0<\delta\le\text{diam}(S_1)$ and for any $z\in S_0$
    \begin{align*}
        \lambda(B(z,\delta)\cap S_1)\ge a\lambda(B(z,\delta)).
    \end{align*}
    \item [(6)] There exists a constant $H>0$ such that the surface area of $S_0$ and $S_1$ is bounded by $H$.
\end{itemize}

\subsubsection{\bf Assumption set B}\label{AssumpB}
(Regression functions)
\begin{itemize}
    \item [(1)] $\mbb{E}\mu_{\omega}^{2}(X)$ is bounded for $\omega\in\{0,1\}$.
    \vspace{0.1in}
    \item[(2)] There exists $M_{l}>0$ such that $\mbb{E} \left[(Y-\mu_{D}(X))^2\right]\ge M_{l}>0$. Moreover, there exist $p>0$ and $0< M_{u,p}<\infty$ such that $\mbb{E}\left[|Y(\omega)-\mu_{\omega}(X)|^{4+p}|(X,D)=(x,\omega) \right]\le M_{u,p}$ for all $(x,\omega)\in\XD{\mbb{X}}$.
    \vspace{0.1in}
    \item [(3)] For $\omega=0,1$, $\mu_{\omega}$ is continuously differentiable up to order $\lfloor m/2\rfloor+1$, where $\lfloor\cdot\rfloor$ denotes the floor function. In particular, this implies that $\max_{t\in\Lambda_{\lfloor m/2\rfloor+1}}\|\partial^{t}\mu_{\omega}\|_{\infty}$ is bounded, where for any positive integer $k$, $\Lambda_{k}$ is the set of all vectors $t=(t_1,\ldots,t_m)\in \mbb{R}^{m}$ with non-negative integer coordinates such that $\sum_{i=1}^{m}t_i=k$.
    \vspace{0.1in}
    \item[(4)] There exists some constant $\epsilon_{\mu}>0$ such that for $\omega\in\{0,1\}$, the estimator $\hat{\mu}_{\omega}$ satisfies
    \begin{align*}
        \mbb{E}\underset{t\in\Lambda_{\lfloor m/2\rfloor+1}}{\max}\|\partial^{t}\hat{\mu}_{\omega}\|^{2}_{\infty}=O(1),\ \text{and}\ \mbb{E}\underset{l\in[\lfloor m/2\rfloor]}{\max}\underset{t\in\Lambda_{l}}{\max}\|\partial^{t}\mu_{\omega}-\partial^{t}\hat{\mu}_{\omega}\|^{2}_{\infty}=O(n^{-2\gamma_{l}}),
    \end{align*}
    with some constant $\gamma_{l}>\frac{1}{2}-\frac{l}{m}+\epsilon_{\mu}$ for $l=1,2,\ldots,\lfloor m/2\rfloor.$
\end{itemize}

\begin{Remark}
    Compared to \citet{lin2023estimation}, we do not require their assumptions 4.4 (i) and (ii) that $ \mbb{E} [(Y(\omega)-\mu_{\omega}(X))^2 | X=x]=\mbb{E} [\bm{\varepsilon}^2 | \XD{X}=(x,\omega)]$ is uniformly bounded away from zero for almost all $(x,\omega)\in \XD{\X}$, and that the $(2+\kappa)$-th conditional moments of the errors are uniformly bounded. The first assumption is needed in \citet{lin2023estimation} to invoke the Lindeberg-Feller central limit theorem, which they use for the asymptotic normality result. 
    
    The use of `stabilization' approach (see Section \ref{sec:stabilization}) to derive a non-asymptotic bound of Gaussian approximation however necessitates Assumption B.2. Note that we require a bounded $(4+p)$-th moment for our bound. Our assumption could potentially be relaxed to bounded $(2+p)$-th moment by using more sophisticated techniques as employed in~\cite{trauthwein2022quantitative}. 
\end{Remark}

\subsection{Rank-based matching} To accommodate the changes in the rank-based matching, some of the assumptions above need appropriate adjustments for the $\phi$-transformation; see \cite{cattaneo2023rosenbaum} for more details.

\subsubsection{\bf Assumption set C}\label{AssumpC}
(Data Distribution)
    \begin{itemize}
    \item [(1)] The image of $\phi_{\omega}$, i.e., $\mbb{X}_{\phi}\subset \mbb{R}^{m'},$ is a compact and convex set with bounded surface area.
    \vspace{0.1in}
    \item [(2)] The densities of $\phi_{\omega}(X), \omega \in \{0,1\},$ are continuous and uniformly bounded from above and below over $\mbb{X}_{\phi}.$ 
    \vspace{0.1in}
    \item[(3)] Same as A.3.
    \vspace{0.1in}
    \item [(4)] $\mbb{P}(D=1|\phi_{\omega}(X)=\phi_{\omega}(x))=\mbb{P}(D=1|X=x)$ for almost all $x\in\X$ and any $\omega\in\{0,1\}$.
    \vspace{0.1in}
    \item [(5)] For $\omega \in \{0,1\}$, let $g_{\phi,\omega,0}$ be the conditional densities of $\phi_{\omega}(X)|D=0$, both with support $S_{\phi,0},$ and similarly, let $g_{\phi,\omega,1}$ be the conditional densities of $\phi_{\omega}(X)|D=1$ with supports $S_{\phi,1}.$
 These conditional densities satisfy a Lipschitz-type condition, namely, for all $x,z\in S_{\phi,0}$ or all $x,z\in S_{\phi,1}$, $(|g_{\phi,\omega,0}(z)-g_{\phi,\omega,0}(x)|\vee |g_{\phi,\omega,1}(z)-g_{\phi,\omega,1}(x)|)\le L_{\phi,\omega}\|x-z\|$ for some constant $L_{\phi,\omega}>0$. They are also uniformly bounded from above and below, i.e., for $i=0,1$, $0<g_{\phi,\omega,i,\min}\le g_{\phi,\omega,i}\le g_{\phi,\omega,i,\max}<\infty$ on $S_{\phi,i}$. We define the maximum of `within' density ratios $r_{\text{ratio},\phi}:=\max_{i=0,1}\max_{\omega=0,1}\left(\frac{g_{\phi,\omega,i,\min}}{g_{\phi,\omega,i,\max}}\right)$.
    \vspace{0.1in}
    \item [(6)] There exists a constant $0<a_{\phi}<1$ such that for any $0<\delta\le \text{diam}(S_{\phi,0})$ and any $z\in S_{\phi,1}$, 
    \begin{align*}
        \lambda(B(z,\delta)\cap S_{\phi,0})\ge a_{\phi}\lambda(B(z,\delta)),
    \end{align*}
    and for any $0<\delta\le\text{diam}(S_{\phi,1})$ and for any $z\in S_{\phi,0}$
    \begin{align*}
        \lambda(B(z,\delta)\cap S_{\phi,1})\ge a_{\phi}\lambda(B(z,\delta)).
    \end{align*}
    \item [(7)] Both the surface areas of $S_{\phi,0}$ and $S_{\phi,1}$ are bounded by a constant $H_{\phi} > 0$.
\end{itemize}

\subsubsection{\bf Assumption set D}\label{AssumpD}
(Regression functions)
\begin{itemize}
    \item [(1)] $\mbb{E}\mu_{\phi,\omega}^{2}(L_{\phi,\omega})$ is bounded for $\omega\in\{0,1\}$.
    \vspace{0.1in}
    \item[(2)] There exists $M_{l,\phi}>0$ such that $\mbb{E} [Y-\mu_{\phi,D}(L_{\phi,D})]^2\ge M_{l,\phi}>0$. Moreover, there exist $p>0$ and $0< M_{u,\phi,p}<\infty$ such that $ \mbb{E}\left[|Y(\omega)-\mu_{\phi,\omega}(L_{\phi,\omega})|^{4+p}|(X,D) \right]=(x,\omega))\le M_{u,\phi,p}$ for all $(x,\omega)\in\XD{\mbb{X}}$.
    \vspace{0.1in}
    \item [(3)] For $\omega=0,1$, $\mu_{\phi,\omega}$ is continuously differentiable up to order $\lfloor m'/2\rfloor+1$. In particular, $\max_{t\in\Lambda_{\lfloor m'/2 \rfloor\vee 1 +1}}\|\partial^{t}\mu_{\phi,\omega}\|_{\infty}$ is bounded, where for any positive integer $k$, $\Lambda_{k}$ is the set of all vectors $t=(t_1,\ldots,t_m')\in \mbb{R}^{m'}$ with non-negative integer coordinates such that $\sum_{i=1}^{m'}t_i=k$.
    \vspace{0.1in}
    \item[(4)] For $\omega\in\{0,1\}$, the estimator $\hat{\mu}_{\phi,\omega}$ satisfies:
    \begin{align*}
        \mbb{E}\underset{t\in\Lambda_{\lfloor m'/2 \rfloor\vee 1 +1}}{\max}\|\partial^{t}\hat{\mu}_{\phi,\omega}\|^{2}_{\infty}=O(1),\ \text{and}\ \mbb{E}\underset{l\in[\lfloor m'/2\rfloor \vee 1]}{\max}\underset{t\in\Lambda_{l}}{\max}\|\partial^{t}\mu_{\phi,\omega}-\partial^{t}\hat{\mu}_{\phi,\omega}\|^{2}_{\infty}=O(n^{-2\gamma_{\phi,l}}),
    \end{align*}
    with some constant $\gamma_{\phi,l}>\left(\frac{1}{2}-\frac{l}{m'}\right)\vee 0$ for $l=1,2,\ldots,\lfloor m'/2 \rfloor\vee 1.$
\end{itemize}

Note again that similar to the case of covariate bases matching, compared to the assumptions in \citet{cattaneo2023rosenbaum}, we do not require their assumption 5.5 (i) that $ \mbb{E} [(Y(\omega)-\mu_{\phi,\omega}(U_{\phi,\omega}))^2 | U_{\phi,\omega}=u]$ is uniformly bounded away from zero for almost all $(u,\omega)$, which is needed there to apply the Lindeberg-Feller CLT, as well as the uniform boundedness of the $(2+\kappa)$-th moment in their assumption 5.5 (ii). 

\section{Main results}\label{sec:mainresult}
We now present our main results on Gaussian approximation bounds for the matching-based and rank-based ATE estimators. All our bounds are stated for the Kolmogorov distance $\mathsf{d}_{K}(\cdot,\cdot)$, which, for two real-valued random variables $Z_1,Z_2$, is defined as
\begin{align*}
    \mathsf{d}_{K}(Z_1,Z_2):=\sup_{t\in\mbb{R}}~|\mbb{P}(Z_1\le t)-\mbb{P}(Z_2\le t)|.
\end{align*}
For $r_{0}:=r_{\text{ratio}}\vee r_{\text{ratio},\phi}$ (with $r_{\text{ratio}}$ and $r_{\text{ratio},\phi}$ from Assumptions A.3 and C.4, respectively), define the quantities
\begin{align}\label{delta211}
\begin{aligned}
    \delta_{H_1}&:=\frac{1}{n^2\eta^4}+\left(\frac{n}{M\eta}\right)^2\left(e^{-(1-\log 2)M}+e^{M-r_0n\eta-M\log M+M\log (r_0n\eta)}\right)^2,\\
    \delta_{H_2}&:=\left(\frac{M}{n\eta}\right)^{1/m}+\frac{1}{n\eta}+\frac{n}{M}\left(e^{-(1-\log 2)M}+e^{M-r_0n\eta-M\log M+M\log (r_0n\eta)}\right),\\
    \delta_{H_3}&:=\left(\frac{n}{M\eta}\right)^2 e^{-(1-\log 2)M}.
\end{aligned}
\end{align}
For more details about the above terms, see the discussion following Theorem~\ref{thm:covariate}. All our results hold as long as $n \geq 9$. This is due to the fact that a certain tail bound on the radius of stabilization, required for the stabilization technique, holds as long as $n \geq 9$; see Section~\ref{sec:radius} and Lemma \ref{lem:tb} for additional details. 

\subsection{Rates for covariate-based ATE}
\begin{Theorem}[Gaussian approximation bound for $\hat{\tau}_{M}^{bc}$]\label{thm:covariate}
    Let Assumptions A and B in Sections \ref{AssumpA} and \ref{AssumpB} hold with $n\ge 9$, $M\in [n]$ and $\eta \in (0,1/2]$ such that, for constants $C_0,C_1 > 0$, $M\le C_0n\eta$  
    and $n\eta^2 \ge C_1$, where  $C_0 \le \max\limits_{i=1,2}\big(\frac{g_{i,\min}}{4}\big)^{m+1} \frac{V_m}{2L^m}$. 
    Then, for any $p\in (0,1]$ and $m\in\mbb{N}$, there exists a finite constant $C>0$ not depending on $n,M,\eta$ or $p$ such that
    \begin{align*}
        \mathsf{d}_{K}\left(\sqrt{n}(\hat{\tau}_{M}^{bc}-\tau),\mcal{N}(0,\sigma^2)\right)\le C(B_1+B_2+B_3),
    \end{align*}
    where
    \begin{align*}
        B_1&:=\frac{\alpha^{-1} \big((\frac{M}{\zeta\eta})^{\frac{20}{8+p}}\vee 1\big)\cdot \big((\frac{M}{\eta})^{\frac{16+3p}{16+2p}}\vee 1\big)}{n^{\frac{1}{2}}}+ \frac{(\frac{M}{\zeta\eta})^{\frac{40}{8+p}}\vee 1}{n^{\frac{1}{2}}},\\
    B_2&:=(\eta^{-k/(2m)}+\delta_{H_1}^{1/2})\left(M^{\frac{k}{2m}}n^{-\frac{k}{2m}+\frac{1}{4}}+\max_{l\in [k-1]}\left(n^{-\frac{\gamma_{l}}{2}-\frac{l}{2m}+\frac{1}{4}}M^{\frac{l}{2m}}\right)\right),\\
    B_3&:=\frac{1}{\eta}\left(\frac{M}{n\eta}\right)^{1/(2m)}+\delta_{H_1}^{1/2}+(\delta_{H_2}^{1/2}+1)\cdot\frac{1}{\eta M^{1/2}}+\delta_{H_3}^{1/2}+\frac{1}{\eta^{3}n^{1/3}},
    \end{align*}
    with $\alpha=p/(16+2p)$, $\zeta=p/(40+10p)$, $k=\lfloor m/2 \rfloor+1$, $\delta_{H_1}$-\,$\delta_{H_3}$ as in \eqref{delta211}, and $\gamma_l$ as defined in Assumption B.4 in Section \ref{AssumpB}. The limiting variance $\sigma^2$ is given by
    \begin{align}\label{def:sigma}
    \sigma^2:=\Var (\mu_1(X)-\mu_0(X))+\mbb{E}\left(\frac{\sigma_1^2(X)}{e(X)}+\frac{\sigma^2_0(X)}{1-e(X)}\right)>0,\ (\text{by Assumption B.2})
    \end{align}
    where $e(x):=\mbb{P}(D=1|X=x)$ and $\sigma^2_\omega(x):= \mbb{E} [(Y(\omega)-\mu_{\omega}(X))^2 | X=x]$.
\end{Theorem}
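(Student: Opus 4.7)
The plan is to exploit the decomposition in \eqref{eq2:F}, namely $\hat{\tau}_M^{\mathrm{bc}}-\tau = (E_n-\tau) + (B_M-\hat{B}_M)$, and reduce the theorem to two subproblems: (i) a Gaussian approximation for $\sqrt{n}(E_n-\tau)$ and (ii) a probabilistic bound on the bias $\sqrt{n}(B_M-\hat{B}_M)$. Since $\mathcal{N}(0,\sigma^2)$ has a bounded density, the smoothing inequality
\begin{align*}
    \mathsf{d}_K(Y+Z,\mathcal{N}(0,\sigma^2)) \le \mathsf{d}_K(Y,\mathcal{N}(0,\sigma^2)) + \mathbb{P}(|Z|>t) + Ct/\sigma,\quad t>0,
\end{align*}
applied with $Y=\sqrt{n}(E_n-\tau)$ and $Z=\sqrt{n}(B_M-\hat{B}_M)$ combines the two subproblems. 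Optimizing $t$ will yield the three-term form of the bound: $B_1$ from (i), $B_2$ from (ii), and $B_3$ absorbing variance-matching and small truncation remainders.

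The core of the proof is the Gaussian approximation of $\sqrt{n}(E_n-\tau)$. First I would view it as a centered sum of local scores on the marked point cloud $\XDE{\cX}_n=\{(X_i,D_i,\bm{\varepsilon}_i)\}_{i=1}^n$, with score at the $i$-th mark equal to $\mu_1(X_i)-\mu_0(X_i)-\tau + (2D_i-1)(1+K_M^{D_i}(i,\XD{\cX}_n)/M)\bm{\varepsilon}_i$. The second piece depends only on those opposite-label points that can reach $X_i$ via $M$-NN, so the sum is a stabilizing statistic in the sense required by Theorem \ref{rateswithconstant}, with radius of stabilization $R_M(i)$ comparable to the distance from $X_i$ to its $M$-th opposite-labeled nearest neighbor. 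The plan is then to invoke Theorem \ref{rateswithconstant}, which refines Lachi\`{e}ze-Rey--Peccati--Schulte by exposing explicit dependence on moment and stabilization constants. Its hypotheses require $(4+p)$-th moment control of the scores---available from Assumption B.2 together with a tail bound on $K_M^{D_i}/M$---and tail bounds on $R_M(i)$, supplied by Lemma \ref{lem:tb} of Section \ref{sec:radius} and producing the $\delta_{H_j}$ quantities in \eqref{delta211}. Tracking $(M,\eta,n,p)$ through the resulting rate yields $B_1$; the exponents $\alpha=p/(16+2p)$ and $\zeta=p/(40+10p)$ are the H\"{o}lder conjugates that appear each time the $(4+p)$-th moment is interpolated against bounded quantities.

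For the bias term, write
\begin{align*}
    B_M-\hat{B}_M = \frac{1}{nM}\sum_{i=1}^n (2D_i-1)\sum_{m=1}^M\Big[(\hat{\mu}_{1-D_i}-\mu_{1-D_i})(X_{j_m^{1-D_i}(i,\XD{\cX}_n)}) - (\hat{\mu}_{1-D_i}-\mu_{1-D_i})(X_i)\Big],
\end{align*}
and Taylor-expand $\hat{\mu}_\omega-\mu_\omega$ at $X_i$ up to order $k=\lfloor m/2\rfloor+1$. Assumption B.4 bounds the $l$-th-derivative mismatch by $O_{\mathbb{P}}(n^{-\gamma_l})$ for $l<k$, while the $k$-th-order remainder is controlled by the uniform bound on $\|\partial^t\hat{\mu}_\omega\|_\infty$. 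Combined with the $M$-NN distance estimate $\|X_{j_m^{1-D_i}(i,\XD{\cX}_n)}-X_i\|\lesssim (M/(n\eta))^{1/m}$ on the high-probability event from Lemma \ref{lem:tb} (whose complement contributes $\delta_{H_1}$, explaining the prefactor $\eta^{-k/(2m)}+\delta_{H_1}^{1/2}$), multiplying by $\sqrt{n}$ yields $B_2$. The term $B_3$ collects the remaining contributions: the variance-matching error $|\Var(\sqrt{n}(E_n-\tau))-\sigma^2|$, which via $\mathbb{E}[K_M^{D_i}/M\mid X_i]\approx (1-e(X_i))/e(X_i)$ produces the $(\eta M^{1/2})^{-1}$ contribution; a transfer of the nearest-neighbor geometry term $\eta^{-1}(M/(n\eta))^{1/(2m)}$ through the smoothing inequality; the truncation cost $(\eta^3 n^{1/3})^{-1}$ arising from handling unbounded $\bm{\varepsilon}_i$; and the $\delta_{H_j}^{1/2}$ factors from removing boundary and exceptional-neighbor events.

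The hardest part will be carrying the joint dependence on $(M,\eta)$ through every step non-asymptotically: classical stabilization-based Malliavin--Stein bounds conceal these parameters inside absolute constants, yet here they are precisely what must appear in the final rate. Achieving this requires the refinement in Theorem \ref{rateswithconstant}, the sharpened tail estimates for $R_M(i)$ in Lemma \ref{lem:tb}, and careful H\"{o}lder juggling arising each time the $(4+p)$-th moment from Assumption B.2 is applied. Beyond these refinements, the proof assembles standard ingredients: Taylor expansion for the bias, smoothing for the Kolmogorov distance, and tail truncation for unbounded errors.
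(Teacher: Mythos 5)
Your proposal follows essentially the same route as the paper's proof: the decomposition $\hat{\tau}_M^{\mathrm{bc}}-\tau=(E_n-\tau)+(B_M-\hat{B}_M)$, the application of Theorem \ref{rateswithconstant} to the stabilizing score representation of $E_n$ (with tails from Lemma \ref{lem:tb} and moments from Assumption B.2) to get $B_1$, the variance-matching step via Lemma \ref{Variancelowerbound} to get $B_3$, and the Markov-plus-Gaussian-anti-concentration smoothing argument for the bias to get $B_2$. The only minor inaccuracy is attributing the $\eta^{-3}n^{-1/3}$ term to truncation of unbounded errors; in the paper it arises from the concentration of $n_1/n_0$ around $p_1/p_0$ inside the variance-comparison bound, but this does not affect the validity of the overall argument.
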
 
The above Gaussian approximation bound consists of three parts. The term $B_1$ corresponds to the Gaussian approximation bound for $E_n$ in \eqref{eq2:F}, centered at the true ATE and scaled by the sample variance. Similar to the classical Berry-Essen Theorem (see, for example, \eqref{BE}), the polynomials involving $M$ in the numerators are from Assumption B.2 in Section \ref{AssumpB}, and the denominator $n^{1/2}$ in $B_1$ corresponds to a variance lower bound.

The term $B_2$ arises from the bias correction for $(B_M-\hat{B}_M)$. This can be further improved by assuming existence of higher order moments in Assumption B.4 as in Section \ref{AssumpB} instead of just $L^2$ moments. To ensure that the bias term $B_2\rightarrow 0$, one could pick $M\lesssim n^{\iota}$ with
    \begin{align*}
        \iota:=\min_{l\in\lfloor m/2\rfloor}\left\{\left[1-\left(\frac{1}{2}-\gamma_{l}+\epsilon_{\mu}\right)\frac{m}{l}\right]\wedge \left[1-\left(\frac{1}{2}+\epsilon_{\mu}\right)\frac{m}{\lfloor m/2\rfloor+1}\right]\right\},
    \end{align*}
    where $\epsilon_{\mu}$ and $\gamma_{l}$'s are defined in Assumption B.4 in Section \ref{AssumpB}.

The term $B_3$ relies on the convergence rate of the sample variance to its limiting variance $\sigma^2$. For the sake of generality, we keep track of the data balance parameter $\eta$ in Assumption A.3 in \ref{AssumpA}, and have made minimal assumptions on the relationship between $\eta$ and $n$ (or $M$); see also Lemma \ref{Variancelowerbound}. It is obtained via a modified and rigorous non-asymptotic convergence rate argument for the density ratio estimation in contrast to the crude asymptotic arguments in \cite{lin2023estimation}; see Lemma~\ref{lemma:l2convdensityratio}. This also necessitates the addition of the non-asymptotic error terms $\delta_{H_1}$-$\delta_{H_3}$. Moreover, the assumptions that there exists positive constants $C_0,C_1$ such that $M\le C_0n\eta$ and $C_1\le n\eta^2$ are mild in the sense that they are also required for  $B_3$ and $\delta_{H_1}$ to tend to zero.
 
The following corollary to Theorem \ref{thm:covariate} provides a user-friendly bound under some additional mild assumptions that ensure that all the error terms involving $\delta_{H_1}$-$\delta_{H_3}$ are negligible compared to the rest of the summands. 

\begin{Corollary}\label{coro:covariate}
    Let the assumptions of Theorem \ref{thm:covariate} prevail. Additionally, assume that $M^{-1}\log n=o(1), n^{-1}M\log n=o(1)$ and that $\eta$ is bounded away from $0$. Then, for any $p\in (0,1]$ and $m\in\mbb{N}$, there exists a finite constant $C>0$ not depending on $n,M,\eta$ or $p$ such that 
    \begin{align*}
        {\sf d}_{K}\left(\sqrt{n}(\hat{\tau}_{M}^{bc}-\tau),\mcal{N}(0,\sigma^2)\right)\le C(B_1'+B_2'+B_3'),
    \end{align*}
    where
    \begin{align*}
        B_1'&:=M^{\frac{40}{8+p}}n^{-\frac{1}{2}},\\
        B_2'&:=M^{\frac{k}{2m}}n^{-\frac{k}{2m}+\frac{1}{4}}+\max_{l\in [k-1]}\left(n^{-\frac{\gamma_{l}}{2}-\frac{l}{2m}+\frac{1}{4}}M^{\frac{l}{2m}}\right),\\
        B_3'&:=\left(\frac{M}{n}\right)^{1/(2m)}+\frac{1}{M^{1/2}}+\frac{1}{n^{1/3}}.
    \end{align*}
\end{Corollary}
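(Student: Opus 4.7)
The strategy is to take the bound in Theorem~\ref{thm:covariate} term by term, show that the non-asymptotic correction quantities $\delta_{H_1}, \delta_{H_2}, \delta_{H_3}$ become negligible under the additional hypotheses, and then absorb the $\eta$-powers (now $\Theta(1)$) together with algebraic constants into the generic $C$. No new probabilistic technology is needed; the entire argument is bounding the $\delta_H$'s and carrying out the arithmetic.

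The main technical point is the decay of the exponential
\begin{equation*}
    E(M, n, \eta) := \exp(M - r_0 n\eta - M\log M + M\log(r_0 n\eta))
\end{equation*}
appearing inside $\delta_{H_1}$ and $\delta_{H_2}$. Setting $x = M/(n\eta)$ and factoring out $n\eta$, the exponent becomes $n\eta[-r_0 + f(x)]$, where $f(x) := x(1 + \log(r_0/x))$. A direct computation shows that $f$ is strictly increasing on $(0, r_0)$ with $f(r_0) = r_0$ and $f(x) \to 0$ as $x \to 0^+$. Under $n^{-1} M \log n = o(1)$ together with $\eta$ bounded below, we have $x \to 0$, so for $n$ large the exponent is at most $-r_0 n\eta/2 \lesssim -n$, giving $E(M,n,\eta) = o(n^{-K})$ for every $K > 0$. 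The simpler exponential $e^{-(1 - \log 2) M}$ is similarly $o(n^{-K})$ via $M^{-1} \log n = o(1)$. Multiplying by the polynomial prefactors in $\delta_{H_1}, \delta_{H_2}, \delta_{H_3}$ then yields $\delta_{H_1}^{1/2} \lesssim 1/n$, $\delta_{H_2}^{1/2} \lesssim (M/n)^{1/(2m)} + 1/\sqrt{n}$, and $\delta_{H_3}^{1/2} = o(n^{-K})$.

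The remaining reductions are arithmetic. For $B_1$, after absorbing $\alpha^{-1}$, $\zeta$, and the $\eta$-powers into $C$, the elementary inequality $20/(8+p) + (16+3p)/(16+2p) \leq 40/(8+p)$ (valid for $p \in (0,1]$, as one checks by cross-multiplying and noting it reduces to $p \leq 8$) shows the first summand of $B_1$ is dominated by $M^{40/(8+p)} n^{-1/2} = B_1'$. For $B_2$, the prefactor $\eta^{-k/(2m)} + \delta_{H_1}^{1/2}$ is $O(1)$, so the bound collapses directly to $B_2'$. For $B_3$, the terms $\delta_{H_1}^{1/2}$, $\delta_{H_3}^{1/2}$, and $1/(\eta^3 n^{1/3})$ are absorbed into $1/n^{1/3}$, while $(\delta_{H_2}^{1/2} + 1)/(\eta M^{1/2}) \lesssim (M/n)^{1/(2m)}/M^{1/2} + 1/M^{1/2} \lesssim (M/n)^{1/(2m)} + 1/M^{1/2}$, giving $B_3 \lesssim B_3'$. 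The only place where $n^{-1}M\log n = o(1)$ is genuinely needed (as opposed to merely $M \leq C_0 n\eta$) is in forcing $x$ bounded away from $r_0$ so that $f(x)$ is bounded away from $r_0$; this is the main, and only mild, obstacle in the argument.
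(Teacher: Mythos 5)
Your proof is correct and follows essentially the same route as the paper: the paper's own proof simply invokes the exponent analysis from the proof of Lemma~\ref{lemma:l2convdensityratio} (writing the exponent as $-2n\big(r_0\eta+\tfrac{M\log M}{n}-\tfrac{M}{n}-\tfrac{M\log(r_0\eta n)}{n}\big)$, which is your $n\eta[-r_0+f(x)]$ in different packaging) to conclude that $\delta_{H_1}$--$\delta_{H_3}$ are negligible, and then suppresses them. Your reparametrization via $f(x)=x(1+\log(r_0/x))$ and your explicit verification of the exponent inequality $\tfrac{20}{8+p}+\tfrac{16+3p}{16+2p}\le\tfrac{40}{8+p}$ are slightly more detailed than what the paper records, but constitute the same argument.
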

A further simplified bound is provided in~\eqref{dim1} later, where we also compare covariate and rank-based ATE estimators in the univariate setting. We now make some remarks regarding Theorem \ref{thm:covariate} and Corollary \ref{coro:covariate}.

\begin{Remark}[Dependence on $M$]
    As for the dependence on $M$ in $B_1$, we show later in Section \ref{sec:ATEstabilization} that it is derived from viewing the ATE estimator as a sum of certain score functions whose dependencies are restrained within a ball. In particular, as $M$ increases, the radius of the ball becomes large resulting in increased dependency between the scores, deviating further away from an i.i.d.\ setup, which negatively affects the Gaussian convergence. This is due to the nature of the stabilization techniques, which also appear in many other nearest neighbor based estimators (see for instance the weighted entropy estimation in \cite{shi2024flexible} and the random forest estimation in \cite{shi2024multivariate}). 
\end{Remark}

\begin{Remark}[The balance of data]
    The parameter $\eta$ in Assumption A.3 in Section \ref{AssumpA} controls the balance of the data (i.e. the number of the treated and controlled individuals) ensuring sufficient individuals in both groups. According to $B_1$ (for fixed $p$), when $\eta=o(M)$, the bound tends to infinity. This regulates the choice of $M$ when the data is imbalanced. A phenomenon also occurs for $B_3$, where the choices of both $M$ and $n$ must be adjusted according to $\eta$.
\end{Remark}

The doubly robust estimator of ATE considered in \cite{lin2023estimation} actually uses a $K$-fold random partition of the data and averages the estimation on each subset to output a final estimator. We emphasize here that the stabilization technique could also be applied in a similar way as for the bound $B_1$, since both of these estimators use nearest neighbor matching. Carrying out this exercise is left as a future work. Furthermore, although both Theorem \ref{thm:covariate} and Corollary \ref{coro:covariate} are stated in the context of the Euclidean setting, the stabilization technique introduced later in Section \ref{sec:stabilization} used to obtain $B_1$ is valid for general metric spaces. Particularly, it can also be applied in the $m$-dimensional manifold setting, for example, as in \cite{penrose2013limit}.

\subsection{Rates for rank-based ATE}
\begin{Theorem}[Gaussian approximation bound for $\hat{\tau}_{\phi,M}^{bc}$]\label{thm:rank}
     Let Assumptions C and D in Section \ref{AssumpC} and \ref{AssumpD} hold with $n\ge 9$, $M\in [n]$ and $\eta \in (0,1/2]$ such that, for constants $C_0,C_1 > 0$, $M\le C_0n\eta$, 
     and $n\eta^2 \ge C_1 $ where $C_0 \le \max\limits_{i=1,2;\,\omega = 1,2}\big(\frac{g_{\phi,\omega,i,\min}}{4}\big)^{m'+1} \frac{V_{m'}}{2L_{\phi,\omega}^{m'}}.$  
     Then for any $p\in (0,1],m,m'\in \mbb{N}$, there exists a finite constant $C>0$ not depending on $n,M,\eta$ or $p$ such that
     \begin{align*}
        {\sf d}_{K}\left(\sqrt{n}(\hat{\tau}_{\phi,M}^{bc}-\tau),\mcal{N}(0,\sigma_{\phi}^2)\right)\le C(B_4+B_5+B_6),
    \end{align*}
    where
    \begin{align*}
        B_4&:=\frac{\alpha^{-1} \big((\frac{M}{\zeta\eta})^{\frac{20}{8+p}}\vee 1\big)\cdot \big((\frac{M}{\eta})^{\frac{16+3p}{16+2p}}\vee 1\big)}{n^{\frac{1}{2}}}+ \frac{\big((\frac{M}{\zeta\eta})^{\frac{40}{8+p}}\vee 1\big)}{n^{\frac{1}{2}}},\\
    B_5&:=(\eta^{-\frac{k}{2m'}}+\delta_{H_1}^{\frac{1}{2}})\Bigg(M^{k/(2m')}n^{-k/(2m')+1/4}+\max_{l\in [k-1]}\Big(n^{-\gamma_{\phi,l}/2+1/4}\Big(\Big(\frac{M}{n}\Big)^{l/(2m')}+n^{-l/4}\Big)\Big)\\
    &\quad +n^{-k/4+1/4}+n
    ^{1/4}(\sup_{\omega\in\{0,1\}}\lim_{\delta\rightarrow 0}\mbb{E}\sup_{x,y\in\mbb{X},\|\phi_{\omega}(x)-\phi_{\omega}(y)\|\le \delta}\|(\hat{\phi}_{\omega}-\phi_{\omega})(x)-(\hat{\phi}_{\omega}-\phi_{\omega})(y)\|_{\infty})^{1/2}\Bigg),\\
    B_6&:=\frac{1}{\eta}\left(\frac{M}{n\eta}\right)^{1/(2m')}+\delta_{H_1}^{1/2}+(\delta_{H_2}^{1/2}+1)\cdot\frac{1}{\eta M^{1/2}}+\delta_{H_3}^{1/2}+\frac{1}{\eta^{3}n^{1/3}}\\
    &\quad+\left(\frac{n}{M}\right)^{m/m'}\cdot \left(\frac{n^2}{M^2}\mbb{E}\Big(\sup_{\omega\in\{0,1\}}\sup_{x_1,x_2\in \X}\|\hat{\phi}_{\omega}(\cdot;x_1,x_2)-\phi_{\omega}\|_{\infty}^{2m}\Big)\right)^{1/4},
    \end{align*}
    with $\alpha=p/(16+2p)$, $\zeta=p/(40+10p)$, $k=\lfloor m'/2 \rfloor\vee 1 +1$, $\delta_{H_1}$-\,$\delta_{H_3}$ as in \eqref{delta211}, and $\gamma_{\phi,l}$ as defined in Assumption D.4 in Section \ref{AssumpD}. Here, $\hat{\phi}_{\omega}(\cdot;x_1,x_2)$ stands for the estimator constructed by inserting two new points $x_1,x_2\in \X$ into the point cloud with $D=1-\omega$. The limiting variance $\sigma_{\phi}^2$ is defined as
    \begin{align}\label{eq:sigmaphi}
        \sigma_{\phi}^2:=\Var (\mu_{\phi,1}(L_{\phi,1})-\mu_{\phi,0}(L_{\phi,0}))+\mbb{E}\left(\frac{\sigma_{\phi,1}^2(X)}{e(X)}+\frac{\sigma^2_{\phi,0}(X)}{1-e(X)}\right),
\end{align}
with $e(x):=\mbb{P}(D=1|X=x)$ and $\sigma_{\phi,\omega}(x)^2:= \mbb{E} [(Y(\omega)-\mu_{\phi,\omega}(L_{\phi,\omega}))^2 | X=x]$. 
\end{Theorem}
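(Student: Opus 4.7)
\textbf{Proof strategy for Theorem \ref{thm:rank}.} The plan is to run the proof of Theorem \ref{thm:covariate} on the transformed point cloud $\{\phi_\omega(X_i)\}$ in $\mathbb{X}_\phi \subset \mathbb{R}^{m'}$, and then pay an extra price for replacing the unobserved $\phi_\omega$ by the estimator $\hat{\phi}_\omega$ throughout the $M$-NN matching and the bias correction. More concretely, we first decompose $\hat{\tau}_{\phi,M}^{bc}$ in perfect analogy with \eqref{eq2:F},
\begin{align*}
\hat{\tau}_{\phi,M}^{bc} = E_{\phi,n} + (B_{\phi,M} - \hat{B}_{\phi,M}),
\end{align*}
where $E_{\phi,n}:= \tfrac{1}{n}\sum_i(\mu_{\phi,1}(L_{\phi,1,i})-\mu_{\phi,0}(L_{\phi,0,i})) + \tfrac{1}{n}\sum_i(2D_i-1)(1+K_{\phi}(i)/M)\bm{\varepsilon}_i$, with $K_\phi(i)$ defined in \eqref{def:kphii}, and $B_{\phi,M}, \hat{B}_{\phi,M}$ are the obvious $\phi$-analogues of the bias terms. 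Then $\sqrt{n}(\hat{\tau}_{\phi,M}^{bc}-\tau)$ is the sum of a centered-and-rescaled $E_{\phi,n}$ plus the bias remainder, and the three summands $B_4,B_5,B_6$ arise from (i) Gaussian approximation of the main term $E_{\phi,n}$, (ii) bias control, and (iii) replacing the sample variance by $\sigma_\phi^2$.

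For the main term (source of $B_4$), the goal is to show that $E_{\phi,n}$ fits into the stabilization framework developed for Theorem \ref{thm:covariate} and hence falls under the refined Malliavin-Stein bound of Theorem \ref{rateswithconstant}. Under Assumption C the transformed covariates $\phi_\omega(X) \in \mathbb{X}_\phi$ satisfy exactly the geometric and density conditions that Assumption A imposed on $X$ (bounded convex support, bounded densities above and below on each group, interior ball condition, bounded surface area), so the tail bounds on the radius of stabilization of $K_\phi(\cdot)$ transfer verbatim with the dimension $m$ replaced by $m'$. Assumption D.2 provides the $(4+p)$-th moment on the regression errors needed to bound the score moments, giving the exponents $p/(16+2p)$ and $p/(40+10p)$. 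The extra step is that the matching is performed with respect to the estimated $\hat{L}_{\phi,\omega,i}=\hat{\phi}_\omega(X_i)$ rather than $L_{\phi,\omega,i}$; we absorb this by conditioning on $\hat{\phi}$ and using its uniform closeness to $\phi$ to compare the two matching sets. Assumption D.2 together with Lemma \ref{Variancelowerbound} (adapted to the transformed setting) yields the variance lower bound that produces the $n^{-1/2}$ denominator in $B_4$, reproducing the structure of $B_1$.

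For bias control (source of $B_5$), the plan is to mirror the proof of $B_2$ by expanding $\mu_{\phi,\omega}(L_{\phi,\omega,i})-\mu_{\phi,\omega}(L_{\phi,\omega,j})$ along the $M$-NN matches via a Taylor expansion up to order $k=\lfloor m'/2\rfloor \vee 1 + 1$, using the smoothness in Assumption D.3 and the estimation rates in Assumption D.4 (evaluated at exponents $\gamma_{\phi,l}$). The first two summands inside $B_5$ are then the direct $\phi$-analogues of $B_2$. The third and fourth terms, involving $n^{-k/4+1/4}$ and the limit of $\mathbb{E}\sup\|(\hat{\phi}_\omega-\phi_\omega)(x)-(\hat{\phi}_\omega-\phi_\omega)(y)\|_\infty$ over pairs $x,y$ at $\phi$-distance at most $\delta$, come from the additional Taylor error of evaluating $\hat{\mu}_{\phi,\omega}$ at $\hat{L}_{\phi,\omega,j}$ rather than $L_{\phi,\omega,j}$; this is estimated via the modulus of continuity of the increments of $\hat{\phi}_\omega-\phi_\omega$ over NN pairs, which are themselves close in the transformed space on the event that $\hat{\phi}\approx \phi$.

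For the variance step (source of $B_6$), the non-$\phi$ pieces inherit directly from the covariate case through the non-asymptotic density-ratio estimation bound Lemma \ref{lemma:l2convdensityratio}, which only uses density bounds in Assumption C. The novel final term in $B_6$ arises when one needs to replace NN statistics built from $\hat{\phi}$ by those built from $\phi$ inside a sample variance estimator: a standard stability argument for $M$-NN graphs shows that the number of points whose NN set changes is controlled by a neighborhood of the decision boundary of thickness $\|\hat{\phi}_\omega-\phi_\omega\|_\infty$; raising to $2m$ moments (to convert an $L^\infty$ error into a volume bound in $\mathbb{X}_\phi$) and then taking square roots yields the $L^{2m}$-type factor, with the $(n/M)^{m/m'}$ prefactor capturing how a uniform $\hat{\phi}-\phi$ perturbation propagates to the NN distance scale $(M/n)^{1/m'}$.

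The hard part is the propagation of the $\hat{\phi}$-estimation error: $\hat{\phi}_\omega$ depends on the full sample and hence couples all summands in $E_{\phi,n}$, destroying the local-dependence structure that stabilization requires. We handle this by freezing $\hat{\phi}$ on a high-probability event where $\|\hat{\phi}_\omega-\phi_\omega\|_\infty$ is small and the neighborhoods computed from $\hat{\phi}$ and from $\phi$ coincide except on a controllable set; on this event the stabilization analysis proceeds as in the covariate case, and the complementary event is absorbed into the last term of $B_6$ via the two-point-insertion stability quantity $\hat{\phi}_\omega(\cdot;x_1,x_2)$, which is precisely the object that appears after applying the add-one/add-two cost inequalities underlying Theorem \ref{rateswithconstant}. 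Combining the three bounds by the triangle inequality for $\mathsf{d}_K$, as in the covariate proof, then yields the claim.
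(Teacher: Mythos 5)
Your overall architecture matches the paper's: decompose into a main term plus bias remainders, run the stabilization/Malliavin--Stein machinery of Theorem \ref{rateswithconstant} on the main term in the transformed space $\mbb{X}_\phi\subset\mbb{R}^{m'}$ (Assumption C playing the role of Assumption A with $m\mapsto m'$), control the bias via the Taylor expansion under Assumptions D.3--D.4 to get $B_5$, and control the variance mismatch via the density-ratio lemma to get $B_6$. The identification of where each extra $\hat{\phi}$-dependent term comes from is also essentially right, up to one bookkeeping slip: the final term of $B_6$ does not come from the sample-variance comparison but from treating the swap of NN statistics as an additional \emph{bias}, i.e.\ it is $(\sqrt{n}\,\mbb{E}|\Delta E_{\phi,n}|)^{1/2}$ after the Markov-plus-anti-concentration optimization, exactly parallel to how $B_2$ is produced in the covariate case.

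The genuine soft spot is your treatment of the full-sample dependence of $\hat{\phi}_\omega$. You propose to apply stabilization to $E_{\phi,n}$ (built from $K_\phi(i)$, i.e.\ from $\hat{\phi}$-based matches) after ``conditioning on $\hat{\phi}$'' or ``freezing $\hat{\phi}$ on a high-probability event.'' This does not work as stated: conditioning on a statistic of the whole sample destroys the i.i.d.\ binomial structure that Theorem \ref{rateswithconstant} requires, and there is no simple event on which the $\hat{\phi}$-based and $\phi$-based $M$-NN sets coincide deterministically. The paper's fix is an exact algebraic decomposition rather than an event-based one: it introduces the oracle counts $K_\phi^*(i)$ (defined via the true $L_{\phi,\omega,i}=\phi_\omega(X_i)$, which \emph{are} i.i.d.), writes $\hat{\tau}_{\phi,M}^{bc}=E_{\phi,n}^*+(B_{\phi,n}-\hat{B}_{\phi,n})+\Delta E_{\phi,n}$, applies Theorem \ref{rateswithconstant} only to $E_{\phi,n}^*$, and bounds $\sqrt{n}\,\mbb{E}|\Delta E_{\phi,n}|$ in Lemma \ref{rank_biasbound} through the catchment-area comparison of \citet{cattaneo2023rosenbaum}, which is where the two-point-insertion quantity $\hat{\phi}_\omega(\cdot;x_1,x_2)$ actually enters (not through the add-one/add-two cost operators, as you suggest). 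Your proposal gestures at the right remainder but does not supply the decomposition that makes the stabilization step legitimate; without it, the claim that ``the stabilization analysis proceeds as in the covariate case'' is exactly the step the paper flags as impossible for $E_{\phi,n}$ itself.
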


Similar to Theorem \ref{thm:covariate}, we aim to keep track of $\eta$, which measures the balance of the data. Consequently, in practice one should also be careful while picking $M$ and $n$ in relation to the speed of decay of $\eta$ to $0$. Different from Theorem~\ref{thm:covariate}, the Gaussian approximation bound for the $\phi$-transformation based estimator $\hat{\tau}_{\phi,M}^{bc}$ depends on the convergence of the estimator for $\phi_{\omega}$, that is $\hat{\phi}_{\omega}$, as appearing in $B_5$ and $B_6$. Comparing to the asymptotic normality result in \cite{cattaneo2023rosenbaum}, for $B_5$, they assumed a Donsker-type condition: for any $\omega\in\{0,1\}$ and any $\epsilon_e>0$,
\begin{align*}
    \lim_{\delta\rightarrow 0}\limsup_{n\rightarrow\infty}~\mbb{P}\Big(\sqrt{n}\sup_{x,y\in \X:\|\phi_{\omega}(x)-\phi_{\omega}(y)\|\le \delta}\|(\hat{\phi}_{\omega}-\phi_{\omega})(x)-(\hat{\phi}_{\omega}-\phi_{\omega})(y)\|\ge \epsilon_{e}\Big)=0,
\end{align*}
which is an asymptotic property. In our case, such a term directly appears in our non-asymptotic bound. Moreover, for $B_6$, \citet[Assumption 5.9]{cattaneo2023rosenbaum} requires that
\begin{align*}
    \lim_{n\rightarrow\infty}\frac{n^2}{M^2}\mbb{E}\Big(\sup_{\omega\in\{0,1\}}\sup_{x_1,x_2\in \X}\|\hat{\phi}_{\omega}(\cdot;x_1,x_2)-\phi_{\omega}\|_{\infty}^{2m}\Big)=0,
\end{align*}
which does not offer any rates of convergence. We again have this difference featuring in our bound instead.

From Theorem~\ref{thm:rank}, it is also seen that the Gaussian approximation bound depends on the choice of the transformation $\phi$ (including the dimension of the embedded space, $m'$) and its estimation $\hat{\phi}$. This selection is crucial as different transformations can lead to different conclusions for ATE estimation. We refer interested readers to (\cite{rosenbaum2010design}, chapter 9) and \cite{rosenbaum2005exact,rosenbaum2010design} for more details on the influence of choices for the transformation $\phi$. This general issue is beyond the scope of the current paper which focuses on Gaussian approximation bounds. However, focusing on the particular choice of $\phi$-transformation as the cumulative distribution function (CDF), we present the corresponding simplified result in Corollary \ref{coro:cdf}. For the sake of simplicity, we make further mild assumptions that $M^{-1}\log n=o(1), n^{-1}M\log n=o(1)$ and $\eta$ bounded away from $0$.

\begin{Corollary}[Gaussian approximation bound for CDF-rank-based estimator $\hat{\tau}_{r,M}^{\text{bc}}$]\label{coro:cdf}
    Let the assumptions in Theorem \ref{thm:rank} prevail. Assume in addition that $M^{-1}\log n=o(1), n^{-1}M\log n=o(1)$ and that $\eta$ is bounded away from $0$. If $\phi_0=\phi_1=\mbf{F}$ and $\hat{\phi}_0=\hat{\phi}_1=\hat{\mbf{F}}_n$ in Section \ref{sec:rank}, then for the corresponding rank-based ATE estimator $\hat{\tau}_{r,M}^{\text{bc}}$ in \eqref{rankATEest} and any $p\in (0,1],m\in \mbb{N}$, there exists a finite constant $C>0$ not depending on $n,M,\eta$ or $p$ such that
    \begin{align*}
        {\sf d}_{K}\left(\sqrt{n}(\hat{\tau}_{r,M}^{bc}-\tau),\mcal{N}(0,\sigma_{\phi}^2)\right)\le C(B_4'+B_5'+B_6'),
    \end{align*}
    where
    \begin{align*}
        B_4'&:=M^{\frac{40}{8+p}}n^{-\frac{1}{2}},\\
    B_5'&:=M^{k/(2m)}n^{-k/(2m)+1/4}+\max_{l\in [k-1]}\Big(n^{-\gamma_{\phi,l}/2+1/4}\Big(\Big(\frac{M}{n}\Big)^{l/(2m)}+n^{-l/4}\Big)\Big)+n
    ^{-1/4},
    \end{align*}
    and given $B_4'\le C_2$ for some $C_2>0$,
    \begin{align*}
    B_6'&:=\left(\frac{M}{n}\right)^{1/(2m)}+\frac{1}{M^{1/2}}+\left\{
        \begin{aligned}
            &\frac{1}{M^{1/4}},\ \qquad\qquad\  m=1,\\
            &\left(\frac{1}{Mn}\right)^{1/6},\ \quad\ \ m= 2,\\
            &M^{-3/2}n^{(-m+3)/2},\ m\ge 3,
        \end{aligned}
    \right.
    \end{align*}
    with $\alpha=p/(16+2p)$, $\zeta=p/(40+10p)$, $k=\lfloor m/2 \rfloor\vee 1 +1$, and $\gamma_{\phi,l}$ as defined in Assumption D.4 in Section \ref{AssumpD}. The limiting variance $\sigma_{\phi}^2$ is defined by plugging $\phi=\mbf{F}$ in \eqref{eq:sigmaphi}.
\end{Corollary}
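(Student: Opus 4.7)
The plan is to apply Theorem~\ref{thm:rank} directly with the specialization $\phi_0=\phi_1=\mathbf{F}$ and $\hat{\phi}_0=\hat{\phi}_1=\hat{\mathbf{F}}_n$, for which $m'=m$ since $\mathbf{F}$ maps $\mathbb{R}^m$ to $[0,1]^m$. First I would verify that Assumptions~C and~D reduce to consequences of Assumptions~A and~B under this transformation: the image $[0,1]^m$ is compact, convex and has bounded surface; the density of $\mathbf{F}(X)$ is a ratio involving $g$ and its marginals, whose boundedness and Lipschitz property are inherited from A.2 and A.4; C.3 and C.4 follow directly from A.3 together with the determinism of $\mathbf{F}$; and the smoothness and estimation conditions D.3--D.4 follow from B.3--B.4 applied to $\mu_{\omega}\circ\mathbf{F}^{-1}$ together with the regularity of $\mathbf{F}$ under A.2. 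Granted this, Theorem~\ref{thm:rank} yields $\mathsf{d}_K\le C(B_4+B_5+B_6)$, and the rest of the argument is a careful simplification of each summand.

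The first simplification is that $\eta$ being bounded away from $0$ collapses every factor of the form $\eta^{-r}$ into the absolute constant $C$; this immediately converts $B_4$ into $B_4'$. Next, under the extra assumptions $M^{-1}\log n=o(1)$ and $n^{-1}M\log n=o(1)$, each of $\delta_{H_1},\delta_{H_2},\delta_{H_3}$ from~\eqref{delta211} is $o(1)$: the polynomial parts follow from the growth conditions, while the exponentials $e^{-(1-\log 2)M}$ and $e^{M-r_0 n\eta-M\log M+M\log(r_0 n\eta)}$ decay super-polynomially because $M\gg\log n$ and, writing $t:=M/(r_0 n\eta)\to 0$, the second exponent equals $r_0 n\eta(t(1-\log t)-1)\le -c\,n\eta$ for some $c>0$. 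Consequently the prefactor $(\eta^{-k/(2m')}+\delta_{H_1}^{1/2})$ in $B_5$ and the $\delta_{H_i}^{1/2}$ summands in $B_6$ are absorbed into constants.

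The substantive work is in the two CDF-specific pieces. For $B_5$, the Donsker-type quantity $n^{1/4}\bigl(\lim_{\delta\to 0}\mathbb{E}\sup_{\|\mathbf{F}(x)-\mathbf{F}(y)\|\le\delta}\|(\hat{\mathbf{F}}_n-\mathbf{F})(x)-(\hat{\mathbf{F}}_n-\mathbf{F})(y)\|_\infty\bigr)^{1/2}$ is controlled by observing that $\mathbf{F}$ is continuous under A.2 while $\hat{\mathbf{F}}_n$ has coordinate jumps of size $1/n$ at the data points; a pointwise analysis at each $X_i$ shows that the inner limit is $O(1/n)$, so the whole summand contributes $O(n^{-1/4})$, matching the final $n^{-1/4}$ term of $B_5'$. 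For $B_6$, I would invoke multivariate Dvoretzky--Kiefer--Wolfowitz-type bounds to obtain $\mathbb{E}\|\hat{\mathbf{F}}_n(\cdot;x_1,x_2)-\mathbf{F}\|_\infty^{2m}\lesssim n^{-m}$ up to logarithmic factors, after checking that the two inserted points perturb $\hat{\mathbf{F}}_n$ uniformly by at most $2/(n+2)$. Plugging this into $(n/M)^{m/m'}(n^2/M^2\cdot\mathbb{E}\|\cdot\|^{2m})^{1/4}$ and combining with the $(M/n)^{1/(2m)}$ and $M^{-1/2}$ pieces, the side condition $B_4'\le C_2$ (equivalently $M\lesssim n^{(8+p)/80}$) allows one to compare summands; the case split on $m\in\{1,2,\ge 3\}$ in $B_6'$ records which of the resulting contributions dominates in the respective dimensions.

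The principal technical obstacle is the explicit dimensional quantification of the empirical-process modulus-of-continuity limit and of the DKW bound for the two-point-perturbed empirical CDF, with constants tracked carefully enough to yield the clean case split in $B_6'$; everything else is essentially algebraic bookkeeping on the bound provided by Theorem~\ref{thm:rank}.
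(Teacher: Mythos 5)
Your overall strategy---specializing Theorem \ref{thm:rank} to $\phi=\mathbf{F}$, $\hat\phi=\hat{\mathbf{F}}_n$ with $m'=m$, absorbing the $\eta^{-r}$ factors into constants, discarding $\delta_{H_1}$--$\delta_{H_3}$ under the growth conditions, and reducing everything to the two CDF-specific empirical-process quantities---is the same as the paper's, and your treatment of the modulus-of-continuity term in $B_5$ (order $n^{-1}$, hence a contribution of $n^{-1/4}$) agrees with the paper's, which gets it from a Bernstein-type tail bound in \cite{cattaneo2023rosenbaum}.

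The gap is in $B_6$, and it is twofold. First, the rate you propose, $\mathbb{E}\sup_{x_1,x_2}\|\hat{\mathbf{F}}_n(\cdot;x_1,x_2)-\mathbf{F}\|_\infty^{2m}\lesssim n^{-m}$ from a DKW-type argument, is weaker than the $n^{-2m}$ that the paper imports from \citet[Proof of Theorem 3.1, Part I]{cattaneo2023rosenbaum}; with $n^{-m}$ the last summand of $B_6$ becomes $n^{(6-m)/4}M^{-3/2}$, which does not even reproduce the stated $M^{-3/2}n^{(3-m)/2}$ for $m\ge 3$. Second, and more fundamentally, even with the correct rate $n^{-2m}$, your plan of ``plugging this into $(n/M)^{m/m'}(n^2/M^2\cdot\mathbb{E}\|\cdot\|^{2m})^{1/4}$'' fails for $m=1,2$: direct substitution gives $nM^{-3/2}$ and $n^{1/2}M^{-3/2}$ respectively, which do not decay in the admissible range of $M$ and bear no resemblance to $M^{-1/4}$ or $(Mn)^{-1/6}$. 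The paper's proof does not stop at the statement of Theorem \ref{thm:rank}; it re-enters its proof (equations \eqref{L2x} and \eqref{withdeltabiasbound}), keeps the internal localization parameters $\epsilon,\epsilon'\asymp\delta$ free rather than fixing $\delta\asymp(M/n)^{1/m}$, and minimizes $\delta+\delta^{-m}M^{-1/2}n^{(1-m)/2}$ over $\delta\gtrsim(M/n)^{1/m}$, obtaining $\delta^*\asymp(Mn^{m-1})^{-1/(2m+2)}$; this optimization is the sole source of the $M^{-1/4}$ ($m=1$) and $(Mn)^{-1/6}$ ($m=2$) terms. Relatedly, the hypothesis $B_4'\le C_2$ is not there to ``compare summands'': it is used to verify that $\delta^*$ lies in the admissible domain $\delta\gtrsim(M/n)^{1/m}$ so that the infimum is actually attained there. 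Without this re-optimization step your argument cannot produce the stated $B_6'$ in dimensions one and two.
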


It is instructive to compare the rates with covariate and CDF-base rank ATE. From Corollaries \ref{coro:covariate} and \ref{coro:cdf}, we note that $B_1'=B_4'$ and the additional term $n^{-1/4}$ in $B_5'$ is dominated by $n^{-1/6}$ in $B_6'$, thus not affecting the overall rate. Then, the terms that actually affect the overall rate are $B_3'$ and $B_6'$. When $m\geq 3$, the last term in $B_6'$ decays fast leading to the same overall rate with or without a CDF transformation. On the contrary, for example, when $m=1$, the last summand in $B_6'$ (which equals $M^{-1/6}$) dominates $B_5'$ as well as every other summand in $B_6'$. Hence, we have (in the case of CDF-based rank ATE and $m=1$)
\begin{align*}
    {\sf d}_{K}\left(\sqrt{n}(\hat{\tau}_{r,M}^{bc}-\tau),\mcal{N}(0,\sigma_{\phi}^2)\right)\lesssim M^{\frac{40}{8+p}}n^{-\frac{1}{2}}+M^{-\frac{1}{4}}.
\end{align*}
On the other hand, for the case without a CDF transformation and $m=1$ (i.e., covariate-based rank ATE), it holds that
\begin{align}\label{dim1}
    {\sf d}_{K}\left(\sqrt{n}(\hat{\tau}_{M}^{bc}-\tau),\mcal{N}(0,\sigma^2)\right)\lesssim M^{\frac{40}{8+p}}n^{-\frac{1}{2}}+M^{-\frac{1}{2}}.
\end{align}
We can then see that when $M^{\frac{40}{8+p}}n^{-\frac{1}{2}}\lesssim M^{-1/2}$, we expect a strictly worse rate with a CDF transformation.

\section{Bootstrap Approximation Bounds}\label{sec:boostrapresults} 

In the context of matching based estimators, the asymptotic normality results from~\cite{abadie2006large,lin2023estimation} could be used to construct confidence intervals for the ATE parameter $\tau$. Specifically, one claims that $\hat{\tau}_{M}^{\text{bc}}\pm z_{1-\frac{\alpha}{2}}\frac{\sigma}{\sqrt{n}}$, where $z_{1-\alpha/2}$ is the $(1-\alpha/2)$-quantile of the standard normal distribution, provides a $1-\alpha$ confidence interval in this context. However, there are two main shortcomings of such a claim: (i) The validity of the obtained confidence intervals holds only asymptotically as $n\to\infty$, and (ii) The limiting standard deviation $\sigma$ has to be consistently estimated (see \cite{abadie2006large}).

By the definition of the Kolmogorov metric, a direct application of Theorem \ref{thm:covariate} (or Corollary \ref{coro:covariate}) yields that, for any $0<\alpha<1$,
\begin{align*}
    \mbb{P}\Big(\tau\in \Big(\hat{\tau}_{M}^{\text{bc}}-z_{1-\frac{\alpha}{2}}\frac{\sigma}{\sqrt{n}},\hat{\tau}_{M}^{\text{bc}}+z_{1-\frac{\alpha}{2}}\frac{\sigma}{\sqrt{n}}\Big)\Big)\ge 1-\alpha-C(B_1+B_2+B_3).
\end{align*}
While this avoids the shortcoming in point (i) above, the unknown $\sigma$ makes it still impractical. To overcome both shortcomings (i) and (ii) simultaneously, we now provide an application of the Gaussian approximation results developed in the previous sections, for obtaining confidence intervals that are valid in a non-asymptotic sense. 

Bootstrap serves as one of the most important inferential techniques for non-parametric statistical analysis.   However, \cite{abadie2008failure} provided an example showing that the naive bootstrap (i.e., resampling from the empirical distribution of the observations) fails to provide an asymptotically valid standard error and quantiles for a matching-based ATE estimators. In addition, they argue that the main reason for this failure is that the naive bootstrap fails to reproduce the distribution of the number of matched times $K^{\omega}_{M}(i,\xd{\nu}_n)$ in \eqref{matchedtimes}, with fixed $M$.

Later, \cite{otsu2017bootstrap} proposed an alternative bootstrap called the weighted bootstrap to overcome this difficulty and showed validity of their procedure. They only considered the setting with fixed $M$, i.e., when the number of the nearest neighbors stays fixed. Other methodological studies include works by~\cite{abadie2022robust},~\cite{walsh2023nearest} and~\cite{kosko2024fast}. In the following, we adopt a multiplier bootstrap (or wild bootstrap) to not only bootstrap the distribution of the statistic $\sqrt{n}(\hat{\tau}_{M}^{bc}-\tau)$, but also provide a bound for the approximation accuracy in terms of the Kolmogorov distance, while allowing $M$ to diverge with $n$.


We now describe the details of the multiplier bootstrap procedure and present the approximation bounds. Recall the definition of the bias-corrected ATE estimator in \eqref{eq2:F}, given by
\begin{align*}
    \hat{\tau}_{M}^{\text{bc}}&:=\frac{1}{n}\sum_{i=1}^{n}(\hat{\mu}_{1}(X_i)-\hat{\mu}_{0}(X_i))+\frac{1}{n}\sum_{i=1}^{n}(2D_i-1)\Big(1+\frac{K^{D_i}_{M}(i,\XD{\cX}_n)}{M}\Big)(Y_i-\hat{\mu}_{D_i}(X_i)).
\end{align*}

Below, for notational ease, we denote
$$\Delta\hat{\mu}(X_i):=\hat{\mu}_1(X_i)-\hat{\mu}_0(X_i),\; i\in[n]\quad \text{ and }\quad \widebar{\Delta\hat{\mu}}:=\frac{1}{n}\sum_{i=1}^{n}(\hat{\mu}_1(X_i)-\hat{\mu}_0(X_i)).
$$
The multiplier bootstrap is constructed via the following steps:
\begin{itemize}
    \item Start with two sequences of i.i.d.\ random variables $\{V_i\}_{i=1}^{n}$ and $\{W_{i}\}_{i=1}^{n}$ following $\mcal{N}(0,1)$ and $\mcal{N}(1,1)$ distributions, respectively, as multipliers; these two sequences are also independent of each other.
    \vspace{0.1in}
    \item Based on the sample $\{(X_i,Y_i,D_i)\}_{i=1}^{n}$, compute the residuals
    $$\hat{R}_{i}:=Y_i-\hat{\mu}_{D_i}(X_i),\ i\in[n].$$
    \item For each $i\in [n]$, we obtain the bootstrap sample $\{(X_i,Y_{i}^{*},D_i)\}_{i=1}^{n}$ according to 
    \begin{align*}
        Y_{i}^{*}=\hat{\mu}_{D_i}(X_i)+W_i\hat{R}_{i}.
    \end{align*}
    \item Plugging in the bootstrap sample $\{(X_i,Y_{i}^{*},D_i)\}_{i=1}^{n}$ with the multipliers $\{W_i\}_{i=1}^{n},$ and using the additional multipliers $\{V_i\}_{i=1}^{n}$, the bootstrap estimator $\hat\tau_{M}^{\text{boot}}$ is then given by
    \begin{align*}    \hspace*{1.4cm}\hat \tau_{M}^{\text{boot}}&:=\widebar{\Delta\hat{\mu}}+\frac{1}{n}\sum_{i=1}^{n}(\Delta\hat{\mu}(X_i)-\widebar{\Delta\hat{\mu}})V_i+\frac{1}{n}\sum_{i=1}^{n}(2D_i-1)\Big(1+\frac{K^{D_i}_{M}(i,\XD{\cX}_n)}{M}\Big)(Y_i^{*}-\hat{\mu}_{D_i}(X_i)).
    \end{align*}
\end{itemize}

Turning to the rank-based ATE estimator, note that by definition \eqref{rankATEest}, we can rewrite the estimator in~\eqref{phirankATEest} as
\begin{align*}
    \hat{\tau}_{\phi,M}^{\text{bc}}=\frac{1}{n}\sum_{i=1}^{n}(\hat{\mu}_{\phi,1}(\hat{L}_{\phi,1,i})-\hat{\mu}_{\phi,0}(\hat{L}_{\phi,0,i}))+\frac{1}{n}\sum_{i=1}^{n}(2D_i-1)\left(1+\frac{K_{\phi}(i)}{M}\right)(Y_{i}-\hat{\mu}_{D_i}(\hat{L}_{\phi,D_i,i})).
\end{align*}
By replacing $X_i$ with the $\phi$-transformed sample $\hat{L}_{\phi,D_i,i}$, we can analogously construct the bootstrapped version of the rank-based ATE estimators, which we denote by $\hat\tau_{\phi,M}^{\text{boot}}$. The following result provides rate of convergence for the above multiplier bootstrapping procedures.

\begin{Theorem}\label{thm:boot}
Let
\begin{align*}
\mathsf{E}_1\coloneqq \max_{\omega=0,1}\|\mu_{\omega}-\hat{\mu}_{\omega}\|_{\infty}~~\;\;\text{and}\;\;\;~~\mathsf{E}_2\coloneqq  \max_{\omega=0,1}\|\mu_{\phi,\omega}-\hat{\mu}_{\phi,\omega}\|_{\infty}+\max_{\omega=0,1}\|\phi_{\omega}-\hat{\phi}_{\omega}\|_{\infty}.
\end{align*} 
Further, let
\begin{align*}
    L(\mu,\hat{\mu},n)&:=\bigg(M_{l}-\sqrt{M_{u,p}}\ n^{-1/3}-2\mathsf{E}_1\left(M_{u,p}+(2M_{u,p})^{1/4}n^{-5/12}\right)\bigg)\vee 0,
\end{align*}
and
\begin{align*}
    L(\mu,\hat{\mu},\phi,\hat{\phi},n)&:=\bigg(M_{l,\phi}-\sqrt{M_{u,\phi,p}}\ n^{-1/3}-2\mathsf{E}_2\left(M_{u,\phi,p}+(2M_{u,\phi,p})^{1/4}n^{-5/12}\right)\bigg)\vee 0.
\end{align*}
Under the assumptions of Theorem \ref{thm:covariate} and Theorem \ref{thm:rank}, respectively, there exists a finite constant  $C>0$ not depending on $n,M,\eta$ or $p$, such that
\begin{align*}
    &\mathsf{d}_{K}(\sqrt{n}(\hat\tau^{\text{boot}}_{M}-\hat{\tau}_{M}^{bc})|\XDE{\cX}_n,\sqrt{n}(\hat{\tau}_{M}^{bc}-\tau)) \le C\bigg( B_1+B_2+\frac{(1+\mathsf{E}_1^2) B_3}{L(\mu,\hat{\mu},n)}+\frac{\eta^{-1}\mathsf{E}_1+\eta^{-2}n^{-1/4}}{L(\mu,\hat{\mu},n) }\bigg),
   \shortintertext{and}
   &\mathsf{d}_{K}(\sqrt{n}(\hat \tau^{\text{boot}}_{\phi,M}-\hat{\tau}_{\phi,M}^{bc})|\XDE{\cX}_n,\sqrt{n}(\hat{\tau}_{\phi,M}^{bc}-\tau))\le C\bigg( B_4+B_5+\frac{(1+\mathsf{E}_2^2) B_6}{L(\mu,\hat{\mu},\phi,\hat{\phi},n)}+\frac{\eta^{-1}\mathsf{E}_2+\eta^{-2}n^{-1/4}}{L(\mu,\hat{\mu},\phi,\hat{\phi},n) }\bigg),
\end{align*}
where the two statements hold with probabilities at least $1-16B_3\wedge 1$ (in the covariance-based case) and $1-16B_6\wedge 1$ (in the rank-based case), respectively. The terms $B_3$ and $B_6$ are given in the statements of Theorem \ref{thm:covariate} and Theorem \ref{thm:rank} respectively.

\end{Theorem}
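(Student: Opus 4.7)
The key structural observation is that, conditional on the data $\XDE{\cX}_n$, the multiplier bootstrap increment is \emph{exactly} centered Gaussian. Subtracting the definitions of $\hat\tau^{\text{boot}}_M$ and $\hat\tau^{bc}_M$ gives
\begin{align*}
\sqrt{n}(\hat\tau^{\text{boot}}_M - \hat\tau^{bc}_M) = \frac{1}{\sqrt{n}}\sum_{i=1}^n(\Delta\hat{\mu}(X_i) - \widebar{\Delta\hat{\mu}})V_i + \frac{1}{\sqrt{n}}\sum_{i=1}^n(2D_i-1)\Bigl(1+\frac{K_M^{D_i}(i,\XD{\cX}_n)}{M}\Bigr)(W_i-1)\hat{R}_i,
\end{align*}
which, conditionally on $\XDE{\cX}_n$, has law $\mcal{N}(0,\widehat{V}_n)$ with
\begin{align*}
\widehat{V}_n := \frac{1}{n}\sum_{i=1}^n(\Delta\hat{\mu}(X_i) - \widebar{\Delta\hat{\mu}})^2 + \frac{1}{n}\sum_{i=1}^n\Bigl(1+\frac{K_M^{D_i}(i,\XD{\cX}_n)}{M}\Bigr)^2\hat{R}_i^2.
\end{align*}
The analogous formula holds in the rank-based case, replacing $X_i$ by $\hat{L}_{\phi,D_i,i}$ and $K_M^{D_i}$ by $K_\phi(i)$ from \eqref{def:kphii}. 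Thus the problem reduces entirely to a Gaussian-vs-Gaussian comparison.

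\textbf{Triangle inequality.} By the triangle inequality,
\begin{align*}
\mathsf{d}_K\bigl(\sqrt{n}(\hat\tau^{\text{boot}}_M-\hat\tau^{bc}_M)\,|\,\XDE{\cX}_n,\;\sqrt{n}(\hat\tau^{bc}_M-\tau)\bigr) \le \mathsf{d}_K\bigl(\mcal{N}(0,\widehat{V}_n),\mcal{N}(0,\sigma^2)\bigr) + \mathsf{d}_K\bigl(\sqrt{n}(\hat\tau^{bc}_M-\tau),\mcal{N}(0,\sigma^2)\bigr),
\end{align*}
where the second term is bounded by $C(B_1+B_2+B_3)$ via Theorem \ref{thm:covariate}. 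For the first term I apply the elementary comparison $\mathsf{d}_K(\mcal{N}(0,s_1^2),\mcal{N}(0,s_2^2))\le C|s_1^2-s_2^2|/\min(s_1^2,s_2^2)$, reducing the task to (a) an upper bound on $|\widehat{V}_n-\sigma^2|$ on a high-probability event, and (b) a lower bound on $\widehat{V}_n\wedge\sigma^2$, where the lower bound on $\widehat{V}_n$ is exactly what produces the denominator $L(\mu,\hat{\mu},n)$ in the statement.

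\textbf{Variance bounds.} For the lower bound, discard the non-negative first piece of $\widehat{V}_n$ and use $\hat{R}_i = \bm{\varepsilon}_i - (\hat{\mu}_{D_i}(X_i)-\mu_{D_i}(X_i))$ to obtain $\widehat{V}_n \ge \tfrac{1}{n}\sum_i\bm{\varepsilon}_i^2 - 2\mathsf{E}_1 \cdot \tfrac{1}{n}\sum_i|\bm{\varepsilon}_i| - \mathsf{E}_1^2$; Chebyshev applied to $\tfrac{1}{n}\sum_i\bm{\varepsilon}_i^2$ using the $(4+p)$-moment bound in Assumption B.2 and Markov applied to $\tfrac{1}{n}\sum_i|\bm{\varepsilon}_i|$ using the second moment bound yield $\widehat{V}_n \ge L(\mu,\hat{\mu},n)$ with probability at least $1-Cn^{-1/3}$, which matches the definition of $L$ in the statement. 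For the upper bound on $|\widehat{V}_n-\sigma^2|$, decompose into three pieces: (i) the substitution $\hat{\mu}\to\mu$, which plugs $\mathsf{E}_1$-size errors into the $(1+K_M^{D_i}/M)^2$-weighted sum and into $\widehat{V}_n^{(1)}$; (ii) the substitution $\hat{R}_i^2\to\bm{\varepsilon}_i^2$, which contributes further $\eta^{-1}\mathsf{E}_1 + \eta^{-2}n^{-1/4}$ after controlling $\tfrac{1}{n}\sum_i(K_M^{D_i}/M)$ via the density-ratio estimate of Lemma \ref{lemma:l2convdensityratio} and the $(4+p)$-moment tail; and (iii) the passage from the resulting empirical variance to $\sigma^2$, which is precisely the step quantified by $B_3$ in Theorem \ref{thm:covariate}. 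Markov's inequality converts these expectation bounds into the probability-$1-16B_3$ statement, and dividing by $L(\mu,\hat{\mu},n)$ yields the two stated summands $\frac{(1+\mathsf{E}_1^2)B_3}{L(\mu,\hat{\mu},n)}$ and $\frac{\eta^{-1}\mathsf{E}_1+\eta^{-2}n^{-1/4}}{L(\mu,\hat{\mu},n)}$.

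\textbf{Main obstacle.} The principal difficulty is the concentration of $\widehat{V}_n^{(2)} = \tfrac{1}{n}\sum_i(1+K_M^{D_i}/M)^2\hat{R}_i^2$: the match counts $K_M^{D_i}(i,\XD{\cX}_n)$ are strongly dependent across $i$ because a single perturbation of $(X_j,D_j)$ can alter many $K_M^{D_i}$ simultaneously. Establishing the required expectation bound forces one to reuse the stabilization machinery of Section \ref{sec:stabilization} together with the non-asymptotic density-ratio estimate of Lemma \ref{lemma:l2convdensityratio} developed in the proof of $B_3$; this is what makes the $B_3$-term reappear in the bootstrap bound. The rank-based case then follows by the same recipe with $X_i$ replaced by $\hat{L}_{\phi,D_i,i}$, the Lipschitz continuity of $\mu_{\phi,\omega}$ from Assumption D.3 propagating the additional $\|\phi-\hat{\phi}\|_\infty$ errors (folded into $\mathsf{E}_2$) through the substitutions, and the combinatorial variance analysis carried out against $K_\phi(i)$ rather than $K_M^{D_i}$.
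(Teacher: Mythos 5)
Your proposal is correct and follows essentially the same route as the paper: conditional Gaussianity of the multiplier-bootstrap increment, a triangle inequality through $\mcal{N}(0,\sigma^2)$ combined with Theorem \ref{thm:covariate}, a Gaussian-vs-Gaussian variance comparison, a lower bound on the conditional variance yielding $L(\mu,\hat\mu,n)$ via Markov/Chebyshev on $\tfrac1n\sum_i\bm{\varepsilon}_i^2$ and $\tfrac1n\sum_i|\bm{\varepsilon}_i|$, and control of $|\widehat V_n-\sigma^2|$ by recycling the $\mbb{E}|J_2|$-type density-ratio bounds behind $B_3$. (The extra $-\mathsf{E}_1^2$ in your lower bound is unnecessary, since $(\bm{\varepsilon}_i+\delta_i)^2\ge\bm{\varepsilon}_i^2-2|\bm{\varepsilon}_i|\,|\delta_i|$ already after discarding $\delta_i^2\ge 0$, which recovers the stated $L$ exactly.)
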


    To interpret the bounds in Theorem \ref{thm:boot}, first note that the terms appearing in the numerators are similar to those appearing in Theorems \ref{thm:covariate} and \ref{thm:rank}, except $\mathsf{E}_1$ and $\mathsf{E}_2$, which estimate the quality of the approximation of $\mu_\omega, \mu_{\phi,\omega}$ and $\phi_\omega$. The quantity $L$ appearing in the denominators serve as a lower bound of the conditional variance of the bootstrap estimators. For its eventual positivity, one needs that $\mathsf{E}_1$ and $\mathsf{E}_2$ tend to zero, i.e., the construction of the regression estimator $\hat{\mu}$ and the transformation estimator $\hat{\phi}$ need to be consistent with high probability. One then should expect with high probability that $L(\mu,\hat{\mu},n),L(\mu,\hat{\mu},\phi,\hat{\phi},n)\ge L_{l}>0$ for a strictly positive constant $L_{l}>0$ for $n$ large enough, which can be explicitly determined from the convergence rates of $\hat \mu$ and $\hat \phi$. The bound, in the covariate-based case for instance, then simplifies to the following: with high probability one has
    \begin{align*}
        \mathsf{d}_{K}(\sqrt{n}(\hat\tau^{\text{boot}}_{M}-\hat{\tau}_{M}^{bc})|\XDE{\cX}_n,\sqrt{n}(\hat{\tau}_{M}^{bc}-\tau)) \le C\left( B_1+B_2+(1+E_1^2) B_3+\eta^{-1}E_1+\eta^{-2}n^{-1/4}\right)
    \end{align*}
    for some finite constant $C>0$.

In the proof of Theorem \ref{thm:boot}, we use the limiting Gaussian distribution as the bridge to bound the distributions between the bootstrap and the original estimators. While showing the consistency of bootstrap does not necessarily need a Gaussian limit, our aim here is to highlight a general procedure to derive the rate of convergence of bootstrap via the rate of convergence for the Gaussian approximation using stabilization theory (see Section \ref{sec:stabilization}), which could potentially be useful for many geometric statistics having some form of local dependency structure.

\section{Gaussian approximation of stabilizing statistics}\label{sec:stabilization}
In this section, we discuss briefly our approach to prove the results in Section \ref{sec:mainresult} based on the notion of stabilization and Malliavin-Stein method. Our proof of Theorem \ref{thm:covariate} relies on Theorem \ref{rateswithconstant}, that we introduce here. It is a refinement of the seminal work of \cite{lachieze2019normal}, providing a quantitative bound for Gaussian approximation of Poisson functionals, and serves as the key step towards obtaining Theorems \ref{thm:covariate} and \ref{thm:rank}. Before we can state the result, we first explicitly introduce the setting for functionals of point processes, and the notion add-one cost operators acting on such functionals.

Recall that $\XDE{\X}:=\X\times \{0,1\}\times \mbb{R}$, with $\X \subseteq \mbb{R}^m$. As before, we will often denote an $\XDE{\X}$-valued random vector $(X,D,\bm{\varepsilon})$ or a the nonrandom vector $(x,d,\varepsilon) \in \XDE{\X}$ by $\XDE{X}$ and $\xde{x}$, respectively. While the concepts and results in this section can be extended to more general spaces, we will stick to the space $\XDE{\X}$ related to our ATE estimation (see Section \ref{sec:ATEstabilization}). We refer to  \cite{lachieze2019normal} for results in more general spaces. Let $(\XDE{\X},\mcal{F})$ be a measure space with the joint probability measure $\XDE{\mbb{Q}}$ of $(X,D,\bm{\varepsilon})$, with $\mbb{Q}$ denoting the marginal distribution of $X$. We also define a semi-metric $d_S(\cdot,\cdot)$ on $\XDE{\X}$ as
$$
d_S(\xde{x},\xde{y})= d(x,y), \quad \xde{x},\xde{y} \in \XDE{\X},
$$
where $d(\cdot,\cdot)$ denotes the Euclidean metric on $\mbb{R}^m$. Let \textbf{N} be the set of $\sigma$-finite counting measures on $(\Omega,\mcal{F})$, which can be interpreted as point configurations in $\Omega$. The set \textbf{N} is equipped with the smallest $\sigma$-field $\mathscr{N}$ such that the maps $m_{A}:\textbf{N}\rightarrow \mbb{N}\cup\{0,\infty\},\mcal{M}\mapsto\mcal{M}(A)$ are measurable for all $A\in\mcal{F}$. A point process is a random element in \textbf{N}. For $\mu \in \textbf{N}$, we write $x\in\mu$ if $\mu(\{x\})\ge 1$. Denote by $\textbf{F}(\textbf{N})$ the class of all measurable functions $f:\textbf{N}\rightarrow \mbb{R}$, and by $L^{0}(\XDE{\X}):=L^{0}(\XDE{\X},\mcal{F})$ the class of all real-valued, measurable functions $F$ on $\Omega$. Note that, as $\mcal{F}$ is the completion of $\sigma(\mu)$, each $F\in L^{0}(\XDE{\X})$ can be written as $F=f(\mu)$ for some measurable function $f\in \textbf{F}(\textbf{N})$. Such a mapping $f$, called a \textit{representative} of $F$, is $\XDE{\mbb{Q}}\circ \mu^{-1}$-a.s. uniquely defined. In order to simplify the discussion, we make the following convention: whenever a general function $F$ is introduced, we will select one of its representatives and denote such a representative mapping by the same symbol $F$.

\begin{Definition}[Cost/Difference Operators]\label{def:cost}
Let $F$ be a measurable function on $\Nb$. The family of add-one cost operators, $\mathsf{D}=(\mathsf{D}_{\xde{x}})_{\xde{x}\in \XDE{\mbb{X}}}$, are defined as 
\begin{align*}
	\mathsf{D}_{\xde{x}}F(\mu):=F(\mu\cup\{\xde{x}\})-F(\mu), \quad \xde{x} \in \XDE{\mbb{X}},\, \mu \in \Nb.
\end{align*}
Similarly, we can define a second-order cost operator (also called iterated add-one cost operator): for any  $\xde{x_1},\xde{x_{2}}\in\XDE{\mbb{X}}$ and $ \mu \in \Nb$,
\begin{align*}
	\mathsf{D}^2_{\xde{x_1},\xde{x_2}}F(\mu):=F(\mu\cup\{\xde{x_1}\}\cup\{\xde{x_2}\})-F(\mu\cup\{\xde{x_1}\})-F(\mu\cup\{\xde{x_2}\})+F(\mu).
\end{align*}
\end{Definition}

\begin{Theorem}\label{rateswithconstant}
    For $p\in (0,1]$ and $n\ge 9$, there exist a constant $C>0$ and a quantity $c(M,\eta,p)>0$ such that, for $E_n$ as defined in (\ref{eq2:F}),
	\begin{equation*}
	\mathsf{d}_{K}\left(\frac{E_n-\mbb{E}E_n}{\sqrt{\Var E_n}},\mcal{N}(0,1)\right)\le C(S_{1}+S_{2}+S_{3}+S_{4}+S_{5})
	\end{equation*}
	with 
	\begin{align*}
	S_{1}&:=c(M,\eta,p)^{\frac{2}{4+p/2}}\frac{1}{n\Var E_n}\sqrt{\int_{\XDE{\X}^{2}}\psi_{n}(\xde{x},\xde{x'})\XDE{\mbb{Q}}^{2}(d(\xde{x},\xde{x'}))},\\
    S_{2}&:=c(M,\eta,p)^{\frac{2}{4+p/2}}\frac{1}{n^{\frac{1}{2}}\Var E_n}\sqrt{\int_{\XDE{\mbb{X}}}\left(\int_{\XDE{\mbb{X}}}\psi_{n}(\xde{x},\xde{x'})\XDE{\mbb{Q}}(d\xde{x'})\right)^{2}\XDE{\mbb{Q}}(d\xde{x})},\\
    S_{3}&:=c(M,\eta,p)^{\frac{2}{4+p/2}}\frac{\sqrt{\Gamma_{n}}}{n^2\Var E_n},\\
	S_{4}&:=\bigg(\max\left\{c(M,\eta,p)^{\frac{1}{4+p/2}}\frac{\Gamma_{n}^{\frac{1}{2}}}{(n^2\Var E_n)^{\frac{1}{2}}},c(M,\eta,p)^{\frac{1}{4+p/2}}\frac{\Gamma_{n}^{\frac{1}{4}}}{(n^2\Var E_n)^{\frac{1}{2}}}+1\right\}\\& \quad \quad\quad \quad+c(M,\eta,p)^{\frac{1}{4+p/2}}\frac{\Gamma_{n}^{\frac{1}{4}}}{n^{\frac{1}{4}}(n^2\Var E_n)^{\frac{1}{2}}}\bigg)c(M,\eta,p)^{\frac{3}{4+p/2}}\frac{\Gamma_{n}}{(n^2\Var E_n)^{\frac{3}{2}}}\\&
 \quad \quad\quad \qquad\qquad+c(M,\eta,p)^{\frac{4}{4+p/2}}\frac{\Gamma_{n}}{(n^2\Var E_n)^{2}},\\
	S_{5}&:=c(M,\eta,p)^{\frac{3}{4+p/2}}\frac{\Gamma_{n}}{(n^2\Var E_n)^{\frac{3}{2}}},
	\end{align*}
	where
	\begin{align*}
	\Gamma_{n}&:=n\int_{\XDE{\mbb{X}}}\mbb{P}(\mathsf{D}_{\xde{x}}E_n(\XDE{\cX}_{n-1})\neq 0)^{\frac{p}{16+2p}}\XDE{\mbb{Q}}(d\xde{x}),\\
\psi_{n}(\xde{x},\xde{x'})&:=\underset{\XDE{\mcal{A}}\subset\XDE{\mbb{X}}:|\XDE{\mcal{A}}|\le 1}{\sup}\mbb{P}(\mathsf{D}^2_{\xde{x},\xde{x'}}E_n(\XDE{\mcal{X}}_{n-2-|\check{\mcal{A}}|}\cup\XDE{\mcal{A}})\neq 0)^{\frac{p}{16+2p}}.
\end{align*}
\end{Theorem}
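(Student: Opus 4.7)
The plan is to start from the general quantitative Gaussian approximation inequality for Poisson functionals established by \cite{lachieze2019normal} (essentially their Theorem 1.1, with the refinement for binomial input obtained via de-Poissonization), and then to refine the abstract moment quantities appearing there into the probability-of-non-vanishing form that defines $\psi_n$ and $\Gamma_n$. Concretely, the Lachièze-Rey--Peccati bound controls $\mathsf{d}_K\bigl((E_n-\mathbb{E}E_n)/\sqrt{\Var E_n},\mathcal{N}\bigr)$ by five terms of the same structural shape as $S_1,\dots,S_5$, but with integrands involving fourth-moment quantities such as $\mathbb{E}|\mathsf{D}_{\xde{x}}E_n|^{4}$ and $\mathbb{E}|\mathsf{D}^2_{\xde{x},\xde{x'}}E_n|^{4}$ (the second one with a suitable supremum over one-point perturbations of the configuration, matching the sup over $\XDE{\mathcal{A}}$ in $\psi_n$). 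The goal is therefore to convert these fourth moments into the product of a constant $c(M,\eta,p)$ and a power of the probability that the corresponding cost operator is non-zero.

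The main step to achieve this is a Hölder-type truncation argument. Assumption B.2 provides a $(4+p)$-th conditional moment bound on $\bm{\varepsilon}$, and combining this with the explicit form of $E_n$ in~\eqref{eq2:F} and the boundedness of $\mu_\omega, \hat\mu_\omega$ (Assumption B.3--B.4) one obtains a bound of the form
\begin{equation*}
\mathbb{E}\bigl|\mathsf{D}_{\xde{x}}E_n(\XDE{\cX}_{n-1})\bigr|^{4+p/2}\le c(M,\eta,p),\qquad \sup_{|\XDE{\mathcal{A}}|\le 1}\mathbb{E}\bigl|\mathsf{D}^2_{\xde{x},\xde{x'}}E_n(\XDE{\cX}_{n-2-|\XDE{\mathcal{A}}|}\cup \XDE{\mathcal{A}})\bigr|^{4+p/2}\le c(M,\eta,p),
\end{equation*}
where $c(M,\eta,p)$ captures the dependence on the number of matches $M$ (which governs the size of the scaling factor $1+K^{D_i}_M/M$ and hence the typical magnitude of $\mathsf{D}_{\xde{x}}E_n$) and on $\eta$ (through tail bounds on $K^{D_i}_M$). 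Applying Hölder with exponents $q_1=(8+p)/8$ and $q_2=(8+p)/p$ to split $|\cdot|^{4}\mathds{1}(\cdot\neq 0)$ then yields, for any admissible configuration,
\begin{equation*}
\mathbb{E}\bigl|\mathsf{D}^2_{\xde{x},\xde{x'}}E_n\bigr|^{4}\le c(M,\eta,p)^{8/(8+p)}\cdot \mathbb{P}\bigl(\mathsf{D}^2_{\xde{x},\xde{x'}}E_n\neq 0\bigr)^{p/(8+p)},
\end{equation*}
and analogously for the first-order operator. Taking the square root produces exactly the factor $c(M,\eta,p)^{2/(4+p/2)}=c(M,\eta,p)^{4/(8+p)}$ together with the probability raised to $p/(2(8+p))=p/(16+2p)$, which is precisely how $\psi_n$ and $\Gamma_n$ are defined.

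The last step is bookkeeping: I substitute the above pointwise bounds into each of the five terms produced by the Lachièze-Rey--Peccati inequality, factor out the uniform constants $c(M,\eta,p)^{a/(4+p/2)}$ for $a\in\{2,3,4\}$ as they arise from first- versus second-order operators, and recognize the remaining probability integrals as $\psi_n$ and $\Gamma_n$. The de-Poissonization from a Poisson$(n)$ input to the fixed-size binomial input $\XDE{\cX}_n$ produces only lower-order contributions of the same form, provided $n\ge 9$ (which is also what secures the tail bound on the radius of stabilization used in Lemma~\ref{lem:tb} and thus implicitly in the moment bound above).

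\textbf{Main obstacle.} The delicate part is the verification of the $(4+p/2)$-th moment bound on $\mathsf{D}^2_{\xde{x},\xde{x'}}E_n$ uniformly over the one-point perturbation $\XDE{\mathcal{A}}$. The scaling factor $1+K^{D_i}_M(i,\XDE{\cX}_n)/M$ in the definition of $E_n$ can become large when many units share the same set of $M$-NNs, so one must combine the $(4+p)$-th moment assumption on the error $\bm{\varepsilon}$ with a moment bound on $K^{D_i}_M/M$ that is uniform after adding one or two arbitrary points; this is where the explicit dependence of $c(M,\eta,p)$ on $M$ and $\eta$ is built up, using the stabilization radius properties developed in Section~\ref{sec:stabilization} and the geometric lemmas of Section~\ref{sec:radius}. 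Once this moment estimate is in hand, the rest of the proof is essentially Hölder plus substitution.
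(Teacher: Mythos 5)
Your overall architecture matches the paper's: the proof there also starts from a five-term abstract bound, establishes a $(4+p/2)$-moment estimate on the add-one cost of $nE_n$ with explicit $(M,\eta,p)$ dependence (Lemma \ref{MBofFn}, which yields $c(M,\eta,p)\asymp (M/(\zeta\eta))^5\vee 1$ via the stabilization radius tail bound of Lemma \ref{lem:tb}), and then applies exactly the H\"older split you describe, $\mbb{E}[\mathds{1}(\mathsf{D}^2\neq 0)|\mathsf{D}f|^{4}]\le (\mbb{E}|\mathsf{D}f|^{4+p/2})^{\frac{8}{8+p}}\,\mbb{P}(\mathsf{D}^2\neq 0)^{\frac{p}{8+p}}$, to produce $\psi_n$, $\Gamma_n$ and the powers of $c(M,\eta,p)$. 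Two points in your write-up, however, deviate from what actually works. First, the binomial-input bound is not obtained by de-Poissonizing the Poisson result: the paper invokes the second-order Poincar\'e inequality for symmetric functionals of i.i.d.\ input due to \cite{lachieze2017new} (restated as Theorem \ref{Poincarebinomial}, in terms of recombinations $V,V',W$ of $\{U,U',U''\}$ and the quantities $\gamma_{V,W}$, $\gamma'_{V,V',W}$), which directly yields the five terms for fixed sample size. Your assertion that de-Poissonization ``produces only lower-order contributions of the same form'' is unsubstantiated; controlling the Kolmogorov distance under the random thinning/augmentation from $\mathrm{Poisson}(n)$ to exactly $n$ points is delicate and is precisely what the dedicated binomial theorem is designed to avoid. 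As written, this step is a gap.

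Second, the claimed uniform moment bound $\mbb{E}|\mathsf{D}_{\xde{x}}E_n|^{4+p/2}\le c(M,\eta,p)$ cannot hold pointwise in $\xde{x}=(x,d,\varepsilon)$: the score at the inserted point contains the mark $\varepsilon$ deterministically and $\varepsilon$ is unbounded, so the left-hand side grows like $|\varepsilon|^{4+p/2}$. The paper's Lemma \ref{MBofFn} therefore bounds the moment by $C\,M_n(\xde{x},\XDE{\mcal{A}})^{4+p/2}\,(1+(\zeta\eta)^{-5}M^{5})$ with $M_n(\xde{x},\XDE{\mcal{A}})=1+|\varepsilon|+\sum_{(x_k,d_k,\varepsilon_k)\in\XDE{\mcal{A}}}|\varepsilon_k|$, and only integrates the $\varepsilon$'s out against $\XDE{\mbb{Q}}$ at the very end using Assumption B.2; this bookkeeping is needed to legitimately extract the factor $c(M,\eta,p)^{4/(4+p/2)}$ in front of $\int\psi_n\,d\XDE{\mbb{Q}}^2$. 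A smaller imprecision: the terms of the abstract bound involve $\mathds{1}(\mathsf{D}^2\neq 0)\,|\mathsf{D}f|^{4}$ rather than $|\mathsf{D}^2 f|^{4}$, so only the first-order moment estimate is required; your ``main obstacle'' concerning a uniform $(4+p/2)$-moment bound on $\mathsf{D}^2 E_n$ is not actually needed.
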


\begin{Remark}\label{rmk:relationwithgeneral}
As mentioned above, although the above theorem is stated in terms of the ATE estimation $E_n$, 
under a finite $4+p$-moment condition, \citet[Theorem 4.2]{lachieze2019normal} provides a Gaussian approximation bound for functionals of Binomial point processes. Compared to their general result however, since we are interested in some key parameters such as $M$, $\eta$ and $p$, we keep track of the dependency on $M$, $\eta$ and $p$ of the generic constants in their proof resulting in the constant $c(M,\eta,p)$. In particular, from the proof of Theorem~\ref{rateswithconstant}, it will follow that
\begin{align}\label{def:cMetap}
    c(M,\eta,p)\asymp \Big(\frac{M}{\zeta\eta}\Big)^{5}\vee 1,
\end{align}
with $\zeta:=p/(40+10p)$.
\end{Remark}

Notice that to apply Theorem \ref{rateswithconstant}, one needs to find a lower bound for the variance of $E_n$, which we present in the following result, along with quantitative bounds for the variance approximation. Below, we denote 
$\sigma_{\bm{\varepsilon}}^2 = \Var(\bm{\varepsilon})$.

\begin{Lemma}\label{Variancelowerbound}
    Under the assumptions of Theorem \ref{thm:covariate}, for $n\ge 1$,
    \begin{align*}
        \Var E_n\ge \frac{\sigma_{\bm{\varepsilon}}^2 + \Var (\mu_1(X)-\mu_0(X))}{n}.
    \end{align*}
    Furthermore, for $\sigma^2$ as in \eqref{def:sigma},
    \begin{align*}
        |n\Var E_n-\sigma^2|\lesssim \frac{1}{\eta}\left(\frac{M}{n\eta}\right)^{1/(2m)}+\delta_{H_1}^{1/2}+(\delta_{H_2}^{1/2}+1)\cdot\frac{1}{\eta M^{1/2}}+\delta_{H_3}^{1/2}+\frac{1}{\eta^{3}n^{1/3}},
    \end{align*}
    where $\delta_{H_1}$-$\delta_{H_3}$ are defined in \eqref{delta211}. In addition, if we assume $M^{-1}\log n=o(1), n^{-1}M\log n=o(1)$ and $\eta$ bounded away from $0$, then
    \begin{align*}
        |n\Var E_n-\sigma^2|\lesssim \left(\frac{M}{n}\right)^{1/(2m)}+\frac{1}{M^{1/2}}+\frac{1}{n^{1/3}}.
    \end{align*}
\end{Lemma}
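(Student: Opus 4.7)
The plan is to exploit the conditional independence of the noise terms $\bm{\varepsilon}_i$ given the marked point cloud $\XD{\cX}_n$, reduce the variance to an expectation involving the squared matching weights, and then invoke the non-asymptotic nearest-neighbor density-ratio estimation result in Lemma \ref{lemma:l2convdensityratio} to establish the approximation.

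\textbf{Step 1 (Conditional variance decomposition).} Starting from the expression for $E_n$ in \eqref{eq2:F}, I will use unconfoundedness (Assumption A.3) to obtain $\mbb{E}[\bm{\varepsilon}_i\mid \XD{\cX}_n]=0$ and conditional independence of the $\bm{\varepsilon}_i$. Since the matching counts $K^{D_i}_M(i,\XD{\cX}_n)$ are $\sigma(\XD{\cX}_n)$-measurable and the $(X_i,D_i)$ are i.i.d., the law of total variance yields
\begin{align*}
    n\Var E_n = \Var\bigl(\mu_1(X)-\mu_0(X)\bigr) + \mbb{E}\!\left[\Bigl(1+\tfrac{K^{D_1}_M(1,\XD{\cX}_n)}{M}\Bigr)^{2}\sigma^{2}_{D_1}(X_1)\right].
\end{align*}
The lower bound is then immediate: $(1+K^{D_1}_M/M)^{2}\ge 1$ and $\mbb{E}[\sigma^{2}_{D_1}(X_1)]=\sigma^{2}_{\bm{\varepsilon}}$ (Assumption B.2), giving $n\Var E_n\ge\Var(\mu_1(X)-\mu_0(X))+\sigma^{2}_{\bm{\varepsilon}}$.

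\textbf{Step 2 (Target for the second term).} To prove the approximation, it suffices to show that
\begin{align*}
    \Delta_n:=\left|\mbb{E}\!\left[\Bigl(1+\tfrac{K^{D_1}_M(1)}{M}\Bigr)^{2}\sigma^{2}_{D_1}(X_1)\right] - \mbb{E}\!\left[\frac{\sigma_1^2(X)}{e(X)}+\frac{\sigma_0^2(X)}{1-e(X)}\right]\right|
\end{align*}
is bounded by the stated rate. Splitting by $D_1\in\{0,1\}$, conditioning on $(X_1,D_1)$, and using $e(x)g(x)=\P(D=1)\,g_1(x)$ (and the analogue for $D=0$), the target expression is precisely what one would get if $K^{D_1}_M(1)/M$ were replaced by the local propensity-odds ratio $\P(D\neq D_1\mid X_1)/\P(D=D_1\mid X_1)$. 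That is, the matching count $K^{D_1}_M(1)/M$ acts as a nearest-neighbor estimator of this inverse-propensity-like density ratio, and its bias and conditional variance drive $\Delta_n$.

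\textbf{Step 3 (Applying the density-ratio bound).} Expanding the square, $\Delta_n$ becomes a linear combination of the bias and the variance of the ratio $K^{D_1}_M(1)/M$ as an estimator of the odds ratio, weighted by $\sigma^{2}_{D_1}(X_1)$, which is bounded by Assumption B.2. I will invoke Lemma \ref{lemma:l2convdensityratio} to control these pieces uniformly: its bias contribution of order $(M/(n\eta))^{1/m}$ produces the $\eta^{-1}(M/(n\eta))^{1/(2m)}$ term (after taking the square root required by the $L^2$-style application), its variance contribution of order $(\eta^{2}M)^{-1}$ produces the $(\delta_{H_2}^{1/2}+1)/(\eta M^{1/2})$ term, and the exceptional events controlled via Chernoff-type bounds on $n_0,n_1$ and on the radius of stabilization produce the $\delta_{H_1}^{1/2}$ and $\delta_{H_3}^{1/2}$ terms. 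A truncation argument for the rare event that $K^{D_1}_M/M$ is unusually large, combined with the $(4+p)$-moment assumption B.2 and a H\"older/Markov step, contributes the $\eta^{-3}n^{-1/3}$ remainder. The simplified bound under $M^{-1}\log n=o(1)$, $n^{-1}M\log n=o(1)$, and $\eta$ bounded away from $0$ then follows by observing that under these scalings each $\delta_{H_j}$ decays super-polynomially in $n$, so only the bias, variance, and truncation remainders survive.

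\textbf{Main obstacle.} The hard part is not the variance decomposition (Step 1) but Step 3: producing a fully non-asymptotic bound on the mean and second moment of the NN matching ratio $K^{D_1}_M(1)/M$ with explicit tracking of the imbalance parameter $\eta$ and the degree of matching $M$. This is precisely where Lemma \ref{lemma:l2convdensityratio} refines the asymptotic arguments of \cite{lin2023estimation}. Its application requires careful bookkeeping of boundary effects near $\partial S_0,\partial S_1$ (handled via Assumptions A.5 and A.6), of the small-ball and large-ball regimes for the matching radius, and of the precise $\eta$-dependence of constants, and this is where essentially all the effort lies.
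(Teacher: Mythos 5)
Your proposal follows essentially the same route as the paper: the law of total variance conditional on $\XD{\cX}_n$ gives exactly the decomposition $n\Var E_n=\Var(\mu_1(X)-\mu_0(X))+\mbb{E}[(1+K^{D_1}_M/M)^2\sigma^2_{D_1}(X_1)]$, the lower bound follows from $(1+K/M)^2\ge 1$, and the approximation is reduced to the nearest-neighbor density-ratio error controlled by Lemma \ref{lemma:l2convdensityratio}, just as in the paper. One small correction to your accounting in Step 3: the $\eta^{-3}n^{-1/3}$ remainder does not come from truncating rare large values of $K^{D_1}_M/M$ via the $(4+p)$-moment condition; it arises because Lemma \ref{lemma:l2convdensityratio} bounds the estimator normalized by the random ratio $n_0/n_1$, so one must separately control the discrepancy between $n_1/n_0$ and $p_1/p_0$ (the paper's term $J_{212}$), which is handled by a Hoeffding bound on the binomial counts at scale $t=(np_1)^{2/3}\wedge np_1/2$ and yields $\eta^{-6}n^{-2/3}$ before the final square root.
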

We note here that the lower bound in Lemma \ref{Variancelowerbound} from our Assumptions A(1) and B(2)-(3) in Section \ref{sebsec41} implies that $n \Var E_n \ge C$ for some constant $C>0$.

While a result such as Theorem \ref{rateswithconstant} is a very powerful first step in providing (optimal) rates for Gaussian convergence of functionals of Binomial processes, often the integrals appearing in the bound involving the functions $\Gamma_n$ and $\psi_{n}$ are very difficult to directly bound. An assumption on the functional $F$ that can be very effectively used to simplify such computations, is that the $F$ can be expressed as a sum of \emph{local contributions} from each point of the underlying process. This phenomenon is often referred to as \emph{stabilization} in the relevant literature. There is an ever-growing literature on the application of stabilization in combination with result such as Theorem \ref{rateswithconstant} arising from the Malliavin-Stein method. We refer to works by \cite{lachieze2019normal,lachieze2022quantitative,shi2024flexible,bhattacharjee2022gaussian,shi2024multivariate} for additional details and its applications in various statistical problems. In the following section, we describe how stabilization helps us to obtain the Gaussian approximation results in Section \ref{sec:mainresult} from Theorem \ref{rateswithconstant}.

\subsection{Stabilizing functionals of binomial point processes}

Let $\XDE{X}_1,\XDE{X}_2,...,\XDE{X}_n$ be i.i.d.\ random variables sampled from $\XDE{\mbb{Q}}$. The binomial point process $\XDE{\cX}_n$ associated with $\{\XDE{X}_1,\XDE{X}_2,...,\XDE{X}_n\}$ is defined as $\XDE{\cX}_n:=\sum_{i=1}^{n}\delta_{\XDE{X}_1},$ where $\delta$ is the Dirac measure. Given its association with the i.i.d.\ sample $\{\XDE{X}_i\}_{i=1}^{n}$, with a slight abuse of notation, we will often interchangeably use the binomial process $\XDE{\cX_n}$ and the i.i.d.\ sample $\{\XDE{X}_1\}_{i=1}^{n}$. In this paper, we concern ourselves with functionals $F_{n}$ of the binomial process $\XDE{\cX_n}$ that can be represented as a sum of the form
\begin{align}\label{FoB}
F_{n}(\XDE{\cX}_n):=\sum_{i=1}^{n}f_{n}(\XDE{X}_i,\XDE{\cX}_n),
\end{align}
where $f_{n}$ is called a {\em score function}. For $i \in [n]$, when contributions $f_n(\XDE{X}_i,\XDE{\cX}_n)$ are sufficiently `local', one can expect a Gaussian limit for the sequence of functionals $F_n$ as $n \to \infty$.

\subsubsection{Radius of Stabilization}\label{sec:radius}
For $n\ge 1$, the score function $f_n$ is said to be stabilizing if there exists an almost surely finite random variable $R_{n}:\XDE{\X}\times \mbf{N}\rightarrow \mbb{R}_{+}$ such that for all $\mu\in\mbf{N}$, $\xde{x}\in\mu$, and all finite $\XDE{\mcal{A}}\subset \XDE{\X}$ with $|\XDE{\mcal{A}}|\le 7$, 
\begin{align*}
f_n\left(\xde{x},(\mu\cup \XDE{\mcal{A}})\cap B(\xde{x},R_n(\xde{x},\mu))\right)=f_n(\xde{x},\mu\cup \XDE{\mcal{A}}),
\end{align*}
where the random variable $R_{n}$ is called the \textit{radius of stabilization}.

Informally speaking, the above definition states that the value of the score function $f_{n}$ at a point $\xde{x}$ is completely determined by all those point in the configuration that lie in the ball centered at $\xde{x}$ with radius $R_{n}$. Note that in \eqref{FoB}, the score functions $f_n$ are possibly dependent on all the points $\XDE{\cX}_n$. Stabilization is a geometrical localization of the dependence between all the score functions.

We further need the radius of stabilization $R_n$ to satisfy certain tail decay conditions. It is said to decay exponentially if there exist constants $C_1,C_2>0$ such that for $\xde{x}\in\XDE{\X}$, $n\ge 9$ and $r\ge 0$,
\begin{align}\label{tailcondition}
    \mbb{P}\Big(R_n(\xde{x},\XDE{\cX}_{n-8} \cup \{\xde{x}\})\ge r \Big)\le C_1e^{-C_2 nr^{m}}.
\end{align}
The presence of $n-8$ in the above definition is for certain technical reasons, see \cite[Eqn.\ (2.5)]{lachieze2019normal} for more details. An exponential decay as above for the tail probability of the radius of stabilization ensures that the dependence between score functions remain relatively local. Instead of exponentially decaying, \cite{penrose2005normal} also proposed the polynomially decaying condition.


\subsubsection{Connection to matching based ATE estimators}\label{sec:ATEstabilization}
We now connect the terminology above with the matching-based ATE estimator notation introduced in Section~\ref{sec:covariateATE}. The same, of course, also applies to the rank-based ATE estimator in \ref{sec:rank}.

Recall the definition of $\hat{\tau}_{M}^{bc}$ in \eqref{eq2:F} given by
\begin{align}\label{eq:E_n}
    \hat{\tau}_{M}^{bc}&=\frac{1}{n}\sum_{i=1}^{n}(\mu_{1}(X_i)-\mu_{0}(X_i))+\frac{1}{n}\sum_{i=1}^{n}(2D_i-1)\Big(1+\frac{K^{D_i}_{M}(i,\XD{\cX}_n)}{M}\Big)\bm{\varepsilon}_i+(B_{M}-\hat{B}_{M})\nonumber\\
    &=: E_n+(B_{M}-\hat{B}_{M}).
\end{align}
Our focus is the main term $E_n.$ Write 
\begin{align*}
    nE_n=\sum_{i=1}^{n}(\mu_{1}(X_i)-\mu_{0}(X_i))+\sum_{i=1}^{n}(2D_i-1)\Big(1+\frac{K^{D_i}_{M}(i,\XD{\cX}_n)}{M}\Big)\bm{\varepsilon}_i.
\end{align*}

Since $\bm{\varepsilon}$ can possibly depend on the covariate pair $(X,D)$, we are naturally led to consider a binomial point process in the aforementioned space $\XDE{\X}=\X\times\{0,1\}\times \mbb{R}$. Also, as mentioned before, we see that $\XDE{\cX}_n=\{(\XDE{X}_{i},i\in[n])\}$, as the collection of the triplets $\{(X_i,D_{i},\bm{\varepsilon}_{i})\}_{i=1}^{n}$, can be viewed as a binomial point process of size $n$ in the space $\XDE{\X}$ distributed as $(X,D,\bm{\varepsilon})$. Note that the collection $\{(X_i,D_{i},\bm{\varepsilon}_{i})\}_{i=1}^{n}$ and the sample $\{(X_i,Y_i,D_i)\}_{i=1}^{n}$ are linked through the true regression functions (conditional means) $\mu_0(x)$ and $\mu_1(x)$, in particular, one has $\bm{\varepsilon}_i=Y_i - \mu_{D_i}(X_i)$ for $i \in [n]$.

Therefore, $nE_n$ can be viewed as a functional of the binomial process $\XDE{\cX}_n$ and be represented as a sum of score functions as
\begin{align*}
    nE_n(\XDE{\cX}_n) &:= \sum_{i=1}^{n}(\mu_{1}(X_i)-\mu_{0}(X_i))+\sum_{i=1}^{n}(2D_i-1)\Big(1+\frac{K^{D_i}_{M}(i,\XD{\cX}_n)}{M}\Big)\bm{\varepsilon}_i\\
    &= \sum_{j=1}^{n}(\mu_{1}(X_j)-\mu_{0}(X_j))+\sum_{j=1}^{n}(2D_j-1)\bm{\varepsilon}_j \\
    & \qquad + \frac{1}{M}\sum_{i=1}^{n}(2D_i-1)\bm{\varepsilon}_i \sum_{j=1,D_j=1-D_i}^{n}\mathds{1}(i\in \mcal{J}^{D_i}_{M}(j,\XD{\cX}_n))\\
    &= \sum_{j=1}^{n}(\mu_{1}(X_j)-\mu_{0}(X_j))+\frac{1}{n}\sum_{j=1}^{n}(2D_j-1)\bm{\varepsilon}_j \\
    & \qquad + \frac{1}{M}\sum_{j=1}^{n}(1-2D_j) \sum_{i=1,D_i=1-D_j}^{n}\bm{\varepsilon}_i \mathds{1}(i\in \mcal{J}^{1-D_j}_{M}(j,\XD{\cX}_n)) =: \sum_{j=1}^n \xi_n(\XDE{X}_j, \XDE{\cX}_n),
\end{align*}
where the score function $\xi_n$ for $\xde{\nu}_k =\{\xde{x}_i\}_{i=1}^k \in \mbf{N}, k \in \mbb{N}$ is given by
\begin{align}\label{xin}
    \xi_n(\xde{x}_j, \xde{\nu}_k) \equiv \xi(\xde{x}_j, \xde{\nu}_k)&:=(\mu_1(x_j)-\mu_0(x_j))+(2d_j-1) \varepsilon_j \nonumber\\
    & \quad+\frac{1}{M}(1-2d_j)\sum_{i=1,d_i=1-d_j}^{k}\bm{\varepsilon}_i \mathds{1}(i\in \mcal{J}^{1-d_j}_{M}(j,\xde{\nu}_k)),\ j\in[k].
\end{align}
Note that all terms in the score $\xi_n(\XDE{X}_j, \XDE{\cX}_n)$ corresponding to the $j$-th sample $\XDE{X}_j$ is determined by $\XDE{X}_j$ except for 
$$
\sum_{i=1,D_i=1-D_j}^{n}\bm{\varepsilon}_i \mathds{1}(i\in \mcal{J}^{D_i}_{M}(j,\XD{\cX}_n)) = \sum_{i=1}^{n}\bm{\varepsilon}_i \mathds{1}(i\in \mcal{J}^{1-D_j}_{M}(j,\XD{\cX}_n)) .
$$ 
This is a function of $\XDE{X}_j$ and all those points in $\XDE{\cX}_n$ that are $M$-NNs of $\XDE{X}_j$ (in the metric $d_S$) when considering only those points in $\XDE{\cX}_n^{1-D_j}$, where for a point collection $\xde{\nu}_n=\{\xde{x}_i\}_{i=1}^n$, we write
\begin{align*}
\xde{\nu}_n^\omega:= \{\xde{x}_i \in \xde{\nu}_n: d_i=\omega\},
\end{align*} 
So, it is straightforward to see that for $j \in [n]$, the score function $\xi(\XDE{X}_j, \XDE{\cX}_n)$ is stabilizing with radius of stabilization (in the metric $d_S$) given by the $M$-NN distance from $\XDE{X}_j$ among the points in $\XDE{\cX}_n^{1-D_j}$. In other words, for a point collection $\xde{\nu}_k = \{\xde{x}_i\}_{i=1}^k$, we can take for $j \in [k]$,
\begin{equation}\label{eq:RoS}
    R_n(\xde{x}_j, \xde{\nu}_k)\equiv R(\xde{x}_j, \xde{\nu}_k) =\max_{i \in  \mcal{J}^{1-d_j}_{M}(j,\xd{\nu}_k)} d(x_i,x_j).
\end{equation}
The radius is also non-increasing in the point collection $\xde{\nu}_k$, so the above holds true even if we add additional points $\mcal{A}$ with $|\mcal{A}| \le 7$ to $\xde{\nu}_k$ (see the discussion in the beginning of the proof of \cite[Theorem 3.1]{lachieze2019normal}).

\subsubsection{Moment condition}
We say that the score function $f_n$ satisfies the $(4+p)$-moment condition if there exists $p\in (0,1]$ such that for all $n\ge 9$, $\xde{x}\in\XDE{\X}$, $\XDE{\mcal{A}}\subset \XDE{\X}$ with $|\XDE{\mcal{A}}|\le 7$,
\begin{align}\label{momentcondition}
    \left(\mbb{E}|f_{n}(\xde{x},\XDE{\cX}_{n-8} \cup\{\xde{x}\}\cup\XDE{\mcal{A}})|^{4+p}\right)^{\frac{1}{4+p}}\le M_{n,p}(\xde{x},\XDE{A}).
\end{align}
In the sequel, we will usually write $M_{n}(\xde{x}, \XDE{A})$ instead for notational convenience.

The above moment condition could be motivated by connections to the classical Berry-Esseen theorem, which provides Gaussian approximation bounds under the assumption of i.i.d.\ score functions, i.e, in \eqref{FoB}, $f_{n}(\XDE{X}_i,\XDE{\cX}_n)\equiv f_{n}(\XDE{X}_i)$ for $i\in [n]$ with bounded third moments $\mbb{E}|f_n(\XDE{X}_1)|^{3}<\infty$. Specifically, it states that
\begin{align}\label{BE}
\mathsf{d}_{K}\left(\frac{F_n-\mbb{E}F_n}{\sqrt{\Var F_n}},\mcal{N}(0,1)\right)
\lesssim&~~ \frac{\mathbb{E}|f_n(\XDE{X}_1)-\mathbb{E}f_n(\XDE{X}_1)|^{3}}{\Var f_n(\XDE{X}_1) }\frac{1}{\sqrt{\Var F_n}},
\end{align}
where $\mcal{N}$ is the standard normal random variable. Note here that our score functions are far from independent and can have significant local dependencies. The motivation behind~\eqref{momentcondition} is to go beyond independence. Assumptions of the radius of stabilization in Section~\ref{sec:radius} restricts the dependence between the scores within balls with radii that decay exponentially. However, in contrast to the classical Berry-Esseen bound that require finite third moments, due to the dependence in our model, we require a slightly stronger $(4+p)$-moment condition.

\section{Road-map for the proofs}\label{sec:roadmap}
We now outline the high-level ideas behind the proofs of our Gaussian and bootstrap approximation results.

\subsection{Gaussian Approximation Results}\label{sec:gauss}
Recall from \eqref{eq2:F} the bias corrected ATE estimator $\hat{\tau}_{M}^{bc}=E_n+(B_M-\hat{B}_{M})$ for the covariate based matching. The proof of Theorem \ref{thm:covariate} follows in three steps:
\begin{itemize}
    \item[(1)] We apply stabilization theory to bound 
    \begin{align*}
        I_0:={\sf d}_{K}\left(\frac{E_n-\mbb{E}E_n}{\sqrt{\Var E_n}},\mcal{N}(0,1)\right).
    \end{align*}
    For example, to prove Theorem \ref{thm:covariate}, in Lemma~\ref{lem:tb}, we show that the radius of stabilization \eqref{eq:RoS} for the scores given by \eqref{xin} associated to $E_{n}$ does indeed have an exponentially decaying tail and satisfies \eqref{tailcondition} with $C_1=C$ and $C_2=C\eta M^{-1}$ for some $C>0$ depending only on $m$ and $g_{\min}$ (see Remark \ref{rem:tail}). This, in addition to a moment bound as in \eqref{momentcondition}, leads to a Gaussian limit for $E_n$ (appropriately centered and scaled) via an application of our general bound in Theorem \ref{rateswithconstant}.

    \item [(2)] Next, from the bound in \textbf{Step (1)}, by scaling with the factor $\sqrt{n\Var E_n}/\sigma$, which is close to $1$ by Lemma \ref{Variancelowerbound} with the help of auxiliary Lemma \ref{lemma:l2convdensityratio}, we obtain a bound on
    \begin{align*}
    I_1:={\sf d}_{K}\left(\frac{\sqrt{n}\, (E_{n}-\mbb{E}E_n)}{\sigma},\mcal{N}(0,1)\right),
\end{align*}
where $\sigma^{2}$ is defined in \eqref{def:sigma}.
\vspace{0.1in}
\item [(3)] Lastly, noting that $\mbb{E}E_n=\tau$ and bounding $\hat{\tau}_{M}^{bc}-E_n=B_M-\hat{B}_{M}$ through its moment in \eqref{biasboundcovariate}, we obtain a bound for
\begin{align*}
    I_2:={\sf d}_{K}\left(\frac{\sqrt{n}\,(\hat{\tau}_{M}^{bc}-\tau)}{\sigma},\mcal{N}(0,1)\right).
\end{align*}
\end{itemize}
The proof of Theorem \ref{thm:rank} follows similar steps suitably adapted to the rank-based matching setting.

\subsection{Bootstrap Approximation Results}
In order to bound the distributional distance between $\sqrt{n}(\tau_{M}^{\text{boot}}-\hat{\tau}_{M}^{\text{bc}})$ and $\sqrt{n}(\hat{\tau}_{M}^{\text{bc}}-\tau)$ appearing in the first assertion in Theorem \ref{thm:boot}, we use the Gaussian limit in Theorem \ref{thm:covariate} as the bridge. Roughly, The proof of Theorem \ref{thm:boot} broadly follow in three steps:
\begin{itemize}
    \item [(1)] We first show that conditional on the original data $\XDE{\cX}_n$, the random variable $\sqrt{n}(\tau_{M}^{\text{boot}}-\hat{\tau}_{M}^{\text{bc}})$ can be expressed as an average of independent normal random variables. Thus, it also has a Gaussian distribution conditional on $\XDE{\cX}_n$.
    \vspace{0.1in}
    \item [(2)] On the other hand, by Theorem \ref{thm:covariate}, $\sqrt{n}(\hat{\tau}_{M}^{\text{bc}}-\tau)$ can be quantitatively approximated by a Gaussian distribution with variance $\sigma^2$ defined at \eqref{def:sigma}.
    \vspace{0.1in}
    \item [(3)] Finally, it suffices to bound the distance between these two Gaussian distributions, which we achieve by bounding the difference between the sample variance of $\sqrt{n}(\tau_{M}^{\text{boot}}-\hat{\tau}_{M}^{\text{bc}})$ conditional on $\XDE{\cX}_n$ and the limiting variance $\sigma^2$ with high probability, using standard concentration techniques.
\end{itemize}
A similar procedure also applies for the bootstrap approximation in the case of the rank based ATE estimator.

\subsection*{Acknowledgement}
The first, third and the fourth listed authors were supported by the NSF Grant DMS-2053918. The second author was supported in part by the German Research Foundation (DFG) Project 531540467.

\bibliographystyle{abbrvnat}

\bibliography{example}

\begin{thebibliography}{33}
\providecommand{\natexlab}[1]{#1}
\providecommand{\url}[1]{\texttt{#1}}
\expandafter\ifx\csname urlstyle\endcsname\relax
  \providecommand{\doi}[1]{doi: #1}\else
  \providecommand{\doi}{doi: \begingroup \urlstyle{rm}\Url}\fi

\bibitem[Abadie and Imbens(2006)]{abadie2006large}
A.~Abadie and G.~W. Imbens.
\newblock Large sample properties of matching estimators for average treatment effects.
\newblock \emph{Econometrica}, 74\penalty0 (1):\penalty0 235--267, 2006.

\bibitem[Abadie and Imbens(2008)]{abadie2008failure}
A.~Abadie and G.~W. Imbens.
\newblock On the failure of the bootstrap for matching estimators.
\newblock \emph{Econometrica}, 76\penalty0 (6):\penalty0 1537--1557, 2008.

\bibitem[Abadie and Imbens(2011)]{abadie2011bias}
A.~Abadie and G.~W. Imbens.
\newblock Bias-corrected matching estimators for average treatment effects.
\newblock \emph{Journal of Business \& Economic Statistics}, 29\penalty0 (1):\penalty0 1--11, 2011.

\bibitem[Abadie and Spiess(2022)]{abadie2022robust}
A.~Abadie and J.~Spiess.
\newblock Robust post-matching inference.
\newblock \emph{Journal of the American Statistical Association}, 117\penalty0 (538):\penalty0 983--995, 2022.

\bibitem[Bang and Robins(2005)]{bang2005doubly}
H.~Bang and J.~M. Robins.
\newblock Doubly robust estimation in missing data and causal inference models.
\newblock \emph{Biometrics}, 61\penalty0 (4):\penalty0 962--973, 2005.

\bibitem[Bhattacharjee and Molchanov(2022)]{bhattacharjee2022gaussian}
C.~Bhattacharjee and I.~Molchanov.
\newblock Gaussian approximation for sums of region-stabilizing scores.
\newblock \emph{Electronic Journal of Probability}, 27:\penalty0 1--27, 2022.

\bibitem[Cattaneo et~al.(2023)Cattaneo, Han, and Lin]{cattaneo2023rosenbaum}
M.~D. Cattaneo, F.~Han, and Z.~Lin.
\newblock On {R}osenbaum's rank-based matching estimator.
\newblock \emph{arXiv preprint arXiv:2312.07683}, 2023.

\bibitem[Imbens(2004)]{imbens2004nonparametric}
G.~W. Imbens.
\newblock {Nonparametric estimation of average treatment effects under exogeneity: A review}.
\newblock \emph{Review of Economics and Statistics}, 86\penalty0 (1):\penalty0 4--29, 2004.

\bibitem[Imbens and Wooldridge(2009)]{imbens2009recent}
G.~W. Imbens and J.~M. Wooldridge.
\newblock Recent developments in the econometrics of program evaluation.
\newblock \emph{Journal of economic literature}, 47\penalty0 (1):\penalty0 5--86, 2009.

\bibitem[Kosko et~al.(2024)Kosko, Wang, and Santacatterina]{kosko2024fast}
M.~Kosko, L.~Wang, and M.~Santacatterina.
\newblock A fast bootstrap algorithm for causal inference with large data.
\newblock \emph{Statistics in Medicine}, 2024.

\bibitem[Lachi{\`e}ze-Rey and Peccati(2017)]{lachieze2017new}
R.~Lachi{\`e}ze-Rey and G.~Peccati.
\newblock New {B}erry--{E}sseen bounds for functionals of binomial point processes.
\newblock \emph{The Annals of Applied Probability}, 27:\penalty0 1992–2031, 2017.

\bibitem[Lachi{\`e}ze-Rey et~al.(2019)Lachi{\`e}ze-Rey, Schulte, and Yukich]{lachieze2019normal}
R.~Lachi{\`e}ze-Rey, M.~Schulte, and J.~E. Yukich.
\newblock Normal approximation for stabilizing functionals.
\newblock \emph{The Annals of Applied Probability}, 29\penalty0 (2):\penalty0 931--993, 2019.

\bibitem[Lachi{\`e}ze-Rey et~al.(2022)Lachi{\`e}ze-Rey, Peccati, and Yang]{lachieze2022quantitative}
R.~Lachi{\`e}ze-Rey, G.~Peccati, and X.~Yang.
\newblock {Quantitative two-scale stabilization on the {P}oisson space}.
\newblock \emph{The Annals of Applied Probability}, 32\penalty0 (4):\penalty0 3085--3145, 2022.

\bibitem[Lin and Han(2024)]{lin2024consistency}
Z.~Lin and F.~Han.
\newblock On the consistency of bootstrap for matching estimators.
\newblock \emph{arXiv preprint arXiv:2410.23525}, 2024.

\bibitem[Lin et~al.(2023{\natexlab{a}})Lin, Ding, and Han]{lin2023estimation}
Z.~Lin, P.~Ding, and F.~Han.
\newblock {Estimation based on nearest neighbor matching: From density ratio to average treatment effect}.
\newblock \emph{Econometrica}, 91\penalty0 (6):\penalty0 2187--2217, 2023{\natexlab{a}}.

\bibitem[Lin et~al.(2023{\natexlab{b}})Lin, Ding, and Han]{lin2023estimationsupp}
Z.~Lin, P.~Ding, and F.~Han.
\newblock Supplement to `{Estimation based on nearest neighbor matching: from density ratio to average treatment effect}'.
\newblock \emph{Econometrica Supplemental Material}, 91, 2023{\natexlab{b}}.

\bibitem[Morgan and Harding(2006)]{morgan2006matching}
S.~L. Morgan and D.~J. Harding.
\newblock {Matching estimators of causal effects: Prospects and pitfalls in theory and practice}.
\newblock \emph{Sociological Methods \& Research}, 35\penalty0 (1):\penalty0 3--60, 2006.

\bibitem[Otsu and Rai(2017)]{otsu2017bootstrap}
T.~Otsu and Y.~Rai.
\newblock Bootstrap inference of matching estimators for average treatment effects.
\newblock \emph{Journal of the American Statistical Association}, 112\penalty0 (520):\penalty0 1720--1732, 2017.

\bibitem[Penrose(2003)]{penrose2003random}
M.~Penrose.
\newblock \emph{Random geometric graphs}, volume~5 of \emph{Oxford Studies in Probability}.
\newblock Oxford University Press, 2003.

\bibitem[Penrose and Yukich(2005)]{penrose2005normal}
M.~D. Penrose and J.~E. Yukich.
\newblock Normal approximation in geometric probability.
\newblock \emph{Stein’s method and applications}, 5:\penalty0 37--58, 2005.

\bibitem[Penrose and Yukich(2013)]{penrose2013limit}
M.~D. Penrose and J.~E. Yukich.
\newblock Limit theory for point processes in manifolds.
\newblock \emph{The Annals of Applied Probability}, 23:\penalty0 2161--2211, 2013.

\bibitem[Rosenbaum(1995)]{rosenbaum1995observationalstudies}
P.~R. Rosenbaum.
\newblock \emph{Observational Studies}.
\newblock Springer, 1995.

\bibitem[Rosenbaum(2005)]{rosenbaum2005exact}
P.~R. Rosenbaum.
\newblock An exact distribution-free test comparing two multivariate distributions based on adjacency.
\newblock \emph{Journal of the Royal Statistical Society Series B: Statistical Methodology}, 67\penalty0 (4):\penalty0 515--530, 2005.

\bibitem[Rosenbaum(2010)]{rosenbaum2010design}
P.~R. Rosenbaum.
\newblock \emph{Design of observational studies}, volume~10.
\newblock Springer, 2010.

\bibitem[Rubin(1974)]{rubin1974estimating}
D.~B. Rubin.
\newblock Estimating causal effects of treatments in randomized and nonrandomized studies.
\newblock \emph{Journal of Educational Psychology}, 66\penalty0 (5):\penalty0 688, 1974.

\bibitem[Scharfstein et~al.(1999)Scharfstein, Rotnitzky, and Robins]{scharfstein1999adjusting}
D.~O. Scharfstein, A.~Rotnitzky, and J.~M. Robins.
\newblock Adjusting for nonignorable drop-out using semiparametric nonresponse models.
\newblock \emph{Journal of the American Statistical Association}, 94\penalty0 (448):\penalty0 1096--1120, 1999.

\bibitem[Shi and Ding(2022)]{shi2022berry}
L.~Shi and P.~Ding.
\newblock Berry-esseen bounds for design-based causal inference with possibly diverging treatment levels and varying group sizes.
\newblock \emph{arXiv preprint arXiv:2209.12345}, 2022.

\bibitem[Shi et~al.(2024{\natexlab{a}})Shi, Balasubramanian, and Polonik]{shi2024flexible}
Z.~Shi, K.~Balasubramanian, and W.~Polonik.
\newblock A flexible approach for normal approximation of geometric and topological statistics.
\newblock \emph{Bernoulli}, 30\penalty0 (4):\penalty0 3029--3058, 2024{\natexlab{a}}.

\bibitem[Shi et~al.(2024{\natexlab{b}})Shi, Bhattacharjee, Balasubramanian, and Polonik]{shi2024multivariate}
Z.~Shi, C.~Bhattacharjee, K.~Balasubramanian, and W.~Polonik.
\newblock {Multivariate Gaussian Approximation for Random Forest via Region-based Stabilization}.
\newblock \emph{arXiv preprint arXiv:2403.09960}, 2024{\natexlab{b}}.

\bibitem[Stuart(2010)]{stuart2010matching}
E.~A. Stuart.
\newblock {Matching methods for causal inference: A review and a look forward}.
\newblock \emph{Statistical science: a review journal of the Institute of Mathematical Statistics}, 25\penalty0 (1):\penalty0 1, 2010.

\bibitem[Trauthwein(2022)]{trauthwein2022quantitative}
T.~Trauthwein.
\newblock {Quantitative {CLTs} on the Poisson space via Skorohod estimates and $p$-Poincar\'{e} inequalities}.
\newblock \emph{arXiv preprint arXiv:2212.03782}, 2022.

\bibitem[Wainwright(2019)]{wainwright2019high}
M.~J. Wainwright.
\newblock \emph{High-dimensional statistics: A non-asymptotic viewpoint}, volume~48.
\newblock Cambridge University Press, 2019.

\bibitem[Walsh and Jentsch(2023)]{walsh2023nearest}
C.~Walsh and C.~Jentsch.
\newblock Nearest neighbor matching: M-out-of-n bootstrapping without bias correction vs. the naive bootstrap.
\newblock \emph{Econometrics and Statistics}, 2023.

\end{thebibliography}

\appendix

\section{Intermediate Results}
In order to carry out the steps outlined in Section~\ref{sec:roadmap}, we first need to establish several intermediate results which we do in this section. In particular, we provide a proof for the variance estimation bounds in Lemma \ref{Variancelowerbound}.

First, we focus on the bounds related to the tail condition \eqref{tailcondition}. 
Note that in Section \ref{sec:ATEstabilization}, we showed that the score function $\xi_n$ associated to $nE_n$ is stabilizing with the radius of stabilization $R_n$ given by \eqref{eq:RoS}. Moreover, denote $p_{x,r,1-d}:=\mbb{P}(X \in B(x,r),D=1-d)$. By Assumption A.3 in Section \ref{AssumpA} , we have
\begin{equation}\label{eq:pbd}
    p_{x,r,1-d}=\mbb{P}(X \in B(x,r))\mbb{P}(D=1-d|X \in B(x,r))\in [\eta,1-\eta]\mbb{P}(X \in B(x,r)).
\end{equation}
In what follows, we write $V_m$ for the volume of the unit ball in $\mbb{R}^m$.
\begin{Lemma}\label{lem:tb}
    Under Assumption A.2 in Section \ref{AssumpA}, for all $M \in [n]$, $n \ge 9$ and $\xde{x} =(x,d,\varepsilon) \in \XDE{\X}$,
    \begin{align*}
    \mbb{P}(R_n(\xde{x}, \XDE{\mcal{X}}_{n-8} + \delta_{\xde{x}}) \ge r) \le e^2\cdot \exp\left\{-\frac{V_m g_{\min} \eta}{(2M) \vee 8} nr^{m}\right\}.
\end{align*}
\end{Lemma}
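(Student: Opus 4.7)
The plan is to identify the tail event $\{R_n(\xde{x}, \XDE{\mcal{X}}_{n-8} + \delta_{\xde{x}}) \ge r\}$ with a binomial lower tail and apply a multiplicative Chernoff bound, absorbing the remaining geometric and bookkeeping constants into the stated prefactor $e^2$.

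First I would use the explicit form of the radius of stabilization in \eqref{eq:RoS}: the event in question is precisely the event that the count
$$N_r := \sum_{i=1}^{n-8} \mathds{1}(X_i \in B(x,r),\, D_i = 1-d)$$
is strictly less than $M$. Since $\XDE{\mcal{X}}_{n-8}$ is i.i.d., $N_r \sim \text{Bin}(n-8, p)$ with $p = p_{x,r,1-d}$. Combining \eqref{eq:pbd} (which encodes A.3) with the density lower bound in Assumption A.2 yields $p \ge \eta\, g_{\min}\, V_m\, r^m$ whenever $B(x,r) \subseteq \X$.

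Next I would invoke the multiplicative Chernoff inequality $\mbb{P}(\text{Bin}(n-8,p) \le M-1) \le \exp(-(\mu - M)^2/(2\mu))$, valid for $\mu := (n-8)p \ge M$, together with the trivial bound $\mbb{P}(N_r \le M-1) \le 1$ otherwise. Splitting into two regimes handles the $(2M) \vee 8$ in the denominator: if $\mu \ge 2M$ then $(\mu - M)^2/(2\mu) \ge \mu/8 \ge \mu/((2M)\vee 8)$; if $\mu < 2M$ then $\mu/((2M)\vee 8) \le 1$ so the trivial bound reads $1 \le e \cdot \exp(-\mu/((2M) \vee 8))$. Either way one obtains $\mbb{P}(N_r \le M-1) \le e \cdot \exp(-\mu/((2M) \vee 8))$. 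Passing from $\mu = (n-8)p$ to $np$ in the exponent costs a further factor $e^p \le e$, since
$$\frac{np}{(2M) \vee 8} - \frac{(n-8)p}{(2M) \vee 8} = \frac{8p}{(2M) \vee 8} \le p \le 1.$$
Plugging in the lower bound $p \ge \eta g_{\min} V_m r^m$ recovers the claimed inequality with the prefactor $e^2$.

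The main technical obstacle is the geometric one: the lower bound $p \ge \eta g_{\min} V_m r^m$ is genuinely valid only when $B(x,r) \subseteq \X$. For the range of $r$ where $B(x,r)$ spills out of $\X$, one must either observe that the tail event's probability is non-increasing in $r$ and verify the bound at the transition scale, or replace $V_m r^m$ by $\lambda(B(x,r) \cap \X)$ and invoke compactness of $\X$ together with Assumption A.2 to carry through the same exponential decay. In either case the claim remains valid because, outside the regime $B(x,r) \subseteq \X$, the right-hand side $e^2 \exp(-V_m g_{\min} \eta nr^m / ((2M)\vee 8))$ is either vacuous (exceeds $1$) or sufficiently small that the degraded lower bound on $p$ still suffices. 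This is a standard consideration in stabilization-theory tail estimates and does not substantially alter the core binomial-Chernoff argument.
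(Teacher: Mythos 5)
Your proof is correct and follows essentially the same route as the paper's: identify the tail event with a binomial lower tail $\mbb{P}(\text{Bin}(n-8,p_{x,r,1-d})\le M)$, apply a Chernoff bound in the regime where $M$ is at most a constant fraction of the mean, use the trivial bound $1\le e\cdot e^{-(n-8)p/((2M)\vee 8)}$ otherwise, and absorb the $n-8$ versus $n$ discrepancy into a further factor of $e$. The only differences are cosmetic (the paper invokes the entropy-form Chernoff bound $e^{-NpH(M/(Np))}$ with $H(1/2)\ge 1/8$ rather than your $(\mu-M)^2/(2\mu)$ form), and the boundary issue you flag --- that $\mbb{P}(X\in B(x,r))\ge g_{\min}V_m r^m$ strictly requires $B(x,r)\subseteq\X$ --- is also left implicit in the paper's one-line final step, so your treatment is if anything slightly more careful.
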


\begin{proof}[Proof of Lemma~\ref{lem:tb}]
Note from the definition \eqref{eq:RoS} that
\begin{align*}
\mbb{P}(R_n(\xde{x}, \XDE{\mcal{X}}_{n-8} + \delta_{\xde{x}}) \ge r) &= \mbb{P}(\XDE{\mcal{X}}_{n-8}^{1-d} (\XDE{B}(x,r)) \le M)
\\& =\mbb{P}(\text{Bin}(n-8,\mbb{P}(X\in B(x,r),D=1-d))<M)
\\& =\mbb{P}(\text{Bin}(n-8,p_{x,r,1-d}) \le M),
\end{align*}
so we need to show the bound in the assertion for $\mbb{P}(\text{Bin}(n-8,p_{x,r,1-d})\le M)$.
If $M \le \frac{1}{2}(n-8)p_{x,r,1-d}$, by the Chernoff bound for binomial random variables (see Lemma 1.1 in \cite{penrose2003random}), we have
\begin{align*}
    \mbb{P}(\text{Bin}(n-8,p_{x,r,1-d})\le M) &\le e^{-(n-8)p_{x,r,1-d} H\left(\frac{M}{(n-8)p_{x,r,1-d}}\right)}\\
    &\le e^{-(n-8)p_{x,r,1-d} H\left(\frac{1}{2}\right)} \le e^{-(n-8)p_{x,r,1-d}/8},
\end{align*}
where $H(x):=1-x+x\log x$ for $x>0$.
On the other hand, if $M>\frac{1}{2}(n-8)p_{x,r,1-d}$, then
\begin{align*}
    e\cdot e^{-\frac{1}{2M}(n-8)p_{x,r,1-d}}\ge e \cdot e^{-1}=1 \ge \mbb{P}(R_n(\xde{x}, \XDE{\mcal{X}}_{n-8} + \delta_{\xde{x}}) \ge r).
\end{align*}
Combining the two bounds above yields
$$
\mbb{P}(R_n(\xde{x}, \XDE{\mcal{X}}_{n-8} + \delta_{\xde{x}}) \ge r) \le e \cdot \exp\left\{-\frac{1}{(2M)\vee 8} (n-8)p_{x,r,1-d} \right\}. 
$$
The result is now a direct consequence of Assumption A.2 in Section \ref{AssumpA} and \eqref{eq:pbd}.
\end{proof}

\begin{Remark}\label{rem:tail}
    Lemma \ref{lem:tb} confirms the tail condition \eqref{tailcondition}. Since $M \ge 1$, we see that $R_n$ satisfies the exponentially decaying condition \eqref{tailcondition} with $C_{1}=e^2$ and 
    $$
    C_2=: V_m g_{\min}\eta/(8M)\ge V_m g_{\min}\eta/((2M)\vee 8)
    $$
    For notational convenience, we will take $C_1=C$ and $C_2=C\eta M^{-1}$ for some $C>0$ depending only on $m$ and $g_{\min}$.

    Also note, in Lemma \ref{lem:tb}, instead of a uniform lower bound on the density of $X$ in Assumption A.2, one can relax it to assuming $
    \inf_{x}~\mbb{P}(X \in B(x,r))\ge Cr^{m}$ for some $C>0$.
\end{Remark}

Throughout this section, we will use the following result from \citet[Lemma 5.1 (b)]{lachieze2019normal} many times. It should be noted here that Condition (2.1) in \citet{lachieze2019normal}, which is required for the proof, is trivially satisfied by our probability measure $\XDE{\mbb{Q}}$ due to Assumption A.2 in Section \ref{AssumpA}.

\begin{Lemma}\label{generalintegral}
    There is a constant $C>0$, depending only on $m$, such that
\begin{equation*}
\int_{\X\backslash B(x,r)} e^{-\beta d(x,y)^{m}} \, \mbb{Q}(dy) \le \frac{C}{\beta} e^{-\beta r^{m}/2},
\end{equation*}
for all $\beta\ge 1$, $x\in\X$ and $r\ge 0$.
\end{Lemma}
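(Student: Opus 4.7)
The plan is to reduce the $\mbb{Q}$-integral to a Lebesgue integral on $\mbb{R}^m$ using Assumption A.2, and then compute the Lebesgue integral explicitly in spherical coordinates after a convenient split of the exponential.

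First I would split the integrand as
\[
e^{-\beta d(x,y)^m} = e^{-\beta d(x,y)^m/2}\cdot e^{-\beta d(x,y)^m/2},
\]
and observe that on $\X\setminus B(x,r)$ the second factor is at most $e^{-\beta r^m/2}$. Pulling this factor out of the integral, it remains to show that the integral
\[
\int_{\X} e^{-\beta d(x,y)^m/2}\,\mbb{Q}(dy)
\]
is at most a constant multiple of $1/\beta$.

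Next, using Assumption A.2, which gives $g\le g_{\max}$ on $\X$, I would bound
\[
\int_{\X} e^{-\beta d(x,y)^m/2}\,\mbb{Q}(dy)
\le g_{\max}\int_{\mbb{R}^m} e^{-\beta |y-x|^m/2}\,dy.
\]
Switching to spherical coordinates centered at $x$ (so that the surface area of the sphere of radius $s$ is $m V_m s^{m-1}$) yields
\[
\int_{\mbb{R}^m} e^{-\beta |y-x|^m/2}\,dy
= m V_m \int_{0}^{\infty} s^{m-1} e^{-\beta s^m/2}\,ds.
\]
The substitution $u=\beta s^m/2$ then gives
\[
m V_m \int_{0}^{\infty} s^{m-1} e^{-\beta s^m/2}\,ds
= \frac{2V_m}{\beta}\int_0^\infty e^{-u}\,du = \frac{2V_m}{\beta}.
\]
Combining these estimates, I would conclude that
\[
\int_{\X\setminus B(x,r)} e^{-\beta d(x,y)^m}\,\mbb{Q}(dy)
\le \frac{2V_m g_{\max}}{\beta}\, e^{-\beta r^m/2},
\]
giving the claimed bound with $C=2V_m g_{\max}$, a constant depending only on $m$ (and the fixed distributional parameter $g_{\max}$ from the assumptions, which is treated as absolute throughout the paper).

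There is no real obstacle here: the only point that requires care is that extending the integration from $\X$ to $\mbb{R}^m$ is legitimate because the integrand is nonnegative and $g$ is bounded above uniformly on its support. The hypothesis $\beta\ge 1$ is not strictly needed for the calculation itself but is included so that the resulting bound is useful when combined with the tail estimates of Lemma~\ref{lem:tb} (where the exponent $\beta$ is comparable to $n\eta/M\ge 1$ in the regimes of interest).
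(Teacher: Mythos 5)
Your proposal is correct. Note that the paper does not actually supply a proof of this lemma: it imports it from Lachi\`eze-Rey, Schulte and Yukich (2019), Lemma 5.1(b), and merely remarks that the growth condition (2.1) required there is satisfied by $\mbb{Q}$ thanks to Assumption A.2. Your argument is a direct, self-contained verification in the special case relevant here (an absolutely continuous measure with density bounded by $g_{\max}$): splitting the exponential in half to extract the factor $e^{-\beta r^m/2}$, dominating $\mbb{Q}$ by $g_{\max}$ times Lebesgue measure, and computing $m V_m\int_0^\infty s^{m-1}e^{-\beta s^m/2}\,ds = 2V_m/\beta$ via the substitution $u=\beta s^m/2$ — all of this is sound, and is essentially the computation underlying the cited lemma, which works with the more general surface-growth condition on $\mbb{Q}(B(x,\cdot))$ in place of a bounded density. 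The only cosmetic discrepancy is that your constant $C=2V_m g_{\max}$ depends on $g_{\max}$ as well as on $m$, whereas the statement says "depending only on $m$"; this is harmless under the paper's convention that generic constants may absorb fixed distributional parameters (only the dependence on $n$, $M$, $\eta$, $p$ is tracked), and the same dependence on the measure is present in the cited result through its constant $\kappa$. Your observation that $\beta\ge 1$ is not needed for the computation is also accurate.
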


Recall $\Gamma_n$ in Theorem \ref{rateswithconstant}. Since $\mbb{P}(D_{\xde{x}}E_n(\XDE{\cX}_{n-1})\neq 0) \le 1$ and $\XDE{\mbb{Q}}$ is a probability measure, we trivially obtain the upper bound
\begin{equation}\label{eq:boundgamman}
\Gamma_n=n\int_{\XDE{\mbb{X}}}\mbb{P}(\mathsf{D}_{\xde{x}}E_n(\XDE{\cX}_{n-1})\neq 0)^{\frac{p}{16+2p}}\XDE{\mbb{Q}}(d\xde{x}) \le n.
\end{equation}

We next bound the integrals of $\psi_n(\xde{x},\xde{x'})$ in Theorem \ref{rateswithconstant} in the following lemma.

\begin{Lemma}\label{boundpsin}
    Let $\psi_n(\xde{x},\xde{x'})$ be as in Theorem \ref{rateswithconstant}. Then for $n\ge 9$,
 \begin{align*}
    n^2\int_{\XDE{\X}^{2}}\psi_{n}(\xde{x},\xde{x'})\XDE{\mbb{Q}}^{2}(d(\xde{x},\xde{x'}))&\le \alpha^{-1}((\eta^{-1}M)^{\alpha+1}\vee 1)n,\\
    n\int_{\XDE{\mbb{X}}}\left(n\int_{\XDE{\mbb{X}}}\psi_{n}(\xde{x},\xde{x'})\XDE{\mbb{Q}}(d\xde{x'})\right)^{2}\XDE{\mbb{Q}}(d\xde{x})&\le \alpha^{-2}((\eta^{-1}M)^{2\alpha+2}\vee 1)n,
\end{align*}
where $\alpha:=p/(16+2p)$ and $p\in(0,1]$.
\end{Lemma}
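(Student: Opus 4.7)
The plan is to produce a pointwise bound on $\mbb{P}(D^2_{\xde{x},\xde{x'}} E_n(\mu)\ne 0)$ (which, raised to the $\alpha$-th power, dominates $\psi_n$), and then integrate. Starting from the representation $nE_n = \sum_j \xi(\XDE{X}_j, \XDE{\cX}_n)$ with $\xi$ as in \eqref{xin} and expanding the iterated add-one cost, one obtains
\begin{equation*}
D^2_{\xde{x},\xde{x'}}(nE_n)(\mu) \;=\; D_{\xde{x'}}\xi(\xde{x},\mu\cup\{\xde{x}\}) + D_{\xde{x}}\xi(\xde{x'},\mu\cup\{\xde{x'}\}) + \sum_{\xde{y}\in\mu}D^2_{\xde{x},\xde{x'}}\xi(\xde{y},\mu).
\end{equation*}
By stabilization, each term is forced to vanish unless the radius $R_n$ from \eqref{eq:RoS} exceeds the relevant distance. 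A union bound combined with the iid structure of the binomial process yields $\mbb{P}(D^2 E_n\ne 0)\le A + B + P_3$, where $A,B\le \mbb{P}(R_n\ge d(\xde{x},\xde{x'}))$ and
\begin{equation*}
P_3 \;\le\; n\int_{\XDE{\X}} \mbb{P}\bigl(R_n(\xde{y},\cdot)\ge d(\xde{y},\xde{x})\vee d(\xde{y},\xde{x'})\bigr)\,\XDE{\mbb{Q}}(d\xde{y}).
\end{equation*}
The tail bound in Lemma \ref{lem:tb} (with parameter $\beta := C_2 n\eta/M$) immediately gives $A,B\le e^2\exp(-\beta d(\xde{x},\xde{x'})^m)$.

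The crux of the argument is a sharp bound on $P_3$. Write $d:=d(\xde{x},\xde{x'})$ and $d_1:=d(\xde{y},\xde{x})$. The observations $d_1\vee d(\xde{y},\xde{x'})\ge d_1$ and $d_1\vee d(\xde{y},\xde{x'})\ge d/2$ (the latter by the triangle inequality) yield $(d_1\vee d(\xde{y},\xde{x'}))^m \ge \tfrac{1}{2}(d_1^m + (d/2)^m)$, so the exponential splits as $\exp(-\beta(d_1\vee d(\xde{y},\xde{x'}))^m)\le \exp(-\beta d_1^m/2)\exp(-\beta(d/2)^m/2)$. Pulling the $d$-factor out of the $\xde{y}$-integral and applying Lemma \ref{generalintegral} to $\int \exp(-\beta d_1^m/2)\,\XDE{\mbb{Q}}(d\xde{y})\lesssim M/(n\eta)$ yields
\begin{equation*}
P_3 \;\le\; C(M/\eta)\exp(-C'n\eta d^m/M).
\end{equation*}
I expect this to be the main technical obstacle: one needs simultaneously to extract the $M/\eta$ prefactor from the $\xde{y}$-integral \emph{and} to retain exponential decay in $d$. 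A naive uniform bound $P_3\lesssim M/\eta$ (without $d$-decay) is not integrable enough against $\XDE{\mbb{Q}}^2$ to produce the advertised linear-in-$n$ rate after the $n^2$ scaling; the splitting of the exponent into an integrable factor and a decay factor is what makes the estimate tight.

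With $\alpha := p/(16+2p)\in (0,1)$, subadditivity of $x\mapsto x^\alpha$ gives $\psi_n\le A^\alpha+B^\alpha+P_3^\alpha$. A second application of Lemma \ref{generalintegral} (with $\beta$ replaced by $\alpha\beta$) integrates each of these exponentials against $\XDE{\mbb{Q}}$ contributing a factor $M/(\alpha n\eta)$; combined with the $(M/\eta)^\alpha$ prefactor from $P_3^\alpha$, one arrives at the uniform-in-$\xde{x}$ estimate
\begin{equation*}
n\int \psi_n(\xde{x},\xde{x'})\,\XDE{\mbb{Q}}(d\xde{x'}) \;\le\; \frac{CM^{\alpha+1}}{\alpha\,\eta^{\alpha+1}}.
\end{equation*}
Integrating this bound once more against the probability measure $\XDE{\mbb{Q}}(d\xde{x})$ and multiplying by $n$ produces the first claim in the lemma (the $\vee 1$ absorbs the degenerate regime $M/\eta<1$). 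For the second claim, squaring the uniform estimate above and integrating trivially against $\XDE{\mbb{Q}}(d\xde{x})$, then multiplying by $n$, yields the stated $\alpha^{-2}((\eta^{-1}M)^{2\alpha+2}\vee 1)n$ bound.
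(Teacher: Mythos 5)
Your proposal is correct and follows essentially the same route as the paper: the same expansion of $\mathsf{D}^2_{\xde{x},\xde{x'}}(nE_n)$ into two first-order difference terms plus a sum over the configuration, the same exponential tail bound for the stabilization radius from Lemma \ref{lem:tb}, and two applications of Lemma \ref{generalintegral}, each contributing a factor of order $M/(\alpha n\eta)$. The only substantive differences are cosmetic: you factor the exponent via $(d_1\vee d_2)^m\ge\tfrac12\big(d_1^m+(d/2)^m\big)$ where the paper splits the $\xde{y}$-integral over $B(x,d/2)$ and its complement (both yield $P_3\lesssim(M/\eta)e^{-cn\eta d^m/M}$), and you omit the term $P_4=\sup_{\xde{z}}\mbb{P}(\mathsf{D}^2_{\xde{x},\xde{x'}}\xi_n(\xde{z},\cdot)\neq 0)$ arising from the at most one deterministic point in $\XDE{\mcal{A}}$ inside the supremum defining $\psi_n$, which, however, decays at the same rate and does not affect the final bound.
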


\begin{proof}[Proof of Lemma~\ref{boundpsin}]
Note that according to Definition \ref{def:cost} and \citet[Lemma 5.2]{lachieze2019normal},
\begin{align*} 
 &\underset{\XDE{\mcal{A}}\subset\XDE{\mbb{X}}:|\XDE{\mcal{A}}|\le 1}{\sup}\mbb{P}(\mathsf{D}^2_{\xde{x},\xde{x'}}E_n(\XDE{\mcal{X}}_{n-2-|\check{\mcal{A}}|}\cup\XDE{\mcal{A}})\neq 0)\\
 &\le \underset{\XDE{\mcal{A}}\subset\XDE{\mbb{X}}:|\XDE{\mcal{A}}|\le 1}{\sup}\mbb{P}(\mathsf{D}_{\xde{x}}\xi_n(\xde{x'},\XDE{\mcal{X}}_{n-2-|\check{\mcal{A}}|}\cup\{\xde{x'}\}\cup\XDE{\mcal{A}})\neq 0)\\
 &\quad + \underset{\XDE{\mcal{A}}\subset\XDE{\mbb{X}}:|\XDE{\mcal{A}}|\le 1}{\sup}\mbb{P}(\mathsf{D}_{\xde{x'}}\xi_n(\xde{x},\XDE{\mcal{X}}_{n-2-|\check{\mcal{A}}|}\cup\{\xde{x}\}\cup\XDE{\mcal{A}})\neq 0)\\
 &\quad + n\int_{\XDE{\X}}\underset{\XDE{\mcal{A}}\subset\XDE{\mbb{X}}:|\XDE{\mcal{A}}|\le 1}{\sup}\mbb{P}(\mathsf{D}^2_{\xde{x},\xde{x'}}\xi_n(\xde{z},\XDE{\mcal{X}}_{n-3-|\check{\mcal{A}}|}\cup\{\xde{z}\}\cup\XDE{\mcal{A}})\neq 0)\XDE{\mbb{Q}}(d\xde{z})\\
 &\quad + \underset{\xde{z}\in\XDE{\X}}{\sup}~\mbb{P}(\mathsf{D}^2_{\xde{x},\xde{x'}}\xi_n(\xde{z},\XDE{\mcal{X}}_{n-3}\cup\{\xde{z}\})\neq 0)\\
 &=: P_1+P_2+P_3+P_4.
\end{align*}
We start by bounding $P_1$ and $P_2$. By the definition \eqref{xin} of $\xi_n$, it follows that 
$$
\mbb{P}(\mathsf{D}_{\xde{x}}\xi_n(\xde{x'},\XDE{\mcal{X}}_{n-2-|\check{\mcal{A}}|}\cup\{\xde{x'}\}\cup\XDE{\mcal{A}})\neq 0) \le  \mbb{P}\left(d_S(\xde{x},\xde{x'}) \le R_n(\xde{x'}, \XDE{\mcal{X}}_{n-2-|\check{\mcal{A}}|}\cup\{\xde{x'}\}\cup\XDE{\mcal{A}}) \right),
$$
since the latter event implies that $\xde{x}$ is not among the $M$-NN's of $\xde{x'}$ in $\XDE{\mcal{X}}_{n-2-|\check{\mcal{A}}|}\cup\{\xde{x},\xde{x'}\}\cup\XDE{\mcal{A}}$. 
Now the tail bound in Remark \ref{rem:tail} and monotonicity of $R_n$ in the second argument (and a similar argument for $P_2$) yields
\begin{align*}
    P_1\le C_1 e^{ -C_2 n d(x,x')^m} \qquad \text{and} \qquad P_2\le C_1 e^{ -C_2 n d(x,x')^m}.
\end{align*}
As for $P_3$ and $P_4$, again by the tail bound in Remark \ref{rem:tail} and \citet[Lemma 5.8, with $K=\XDE{\X}$]{lachieze2019normal}, we have
\begin{align*}
    P_3&\lesssim n\int_{\X}e^{ -C_2 n \max \{ d(x,z)^m, d(x', z)^m \}}\mbb{Q}(dz) \quad \text{and} \quad 
    P_4\lesssim \underset{z\in \X}{\sup}~e^{ -C_2 n \max \{d(x,z)^m, d(x', z)^m\}}.
\end{align*}
Since $\max \{d(x,z), d(x', z)\} \ge \frac{1}{2}d(x,x')$ for any $z \in \X$, we obtain
$$
P_4 \lesssim e^{ -C_2 n d(x,x')^m/2^m}.
$$
For convenience, set $r:=\frac{1}{2}d(x,x')$. We can write
\begin{align*}
    P_3 &\lesssim n\int_{\mbb{X}}e^{ -C_2n\max \{d(x,z)^m, d(z, x')^m \}}\mbb{Q}(dz)\\
  &=n\int_{B(x,r)}e^{ -C_2n\max \{ d(x,z)^m, d(z, x')^m \}}\mbb{Q}(dz)+n\int_{\mbb{X}\backslash B(x,r)}e^{ -C_2 n\max \{ d(x,z)^m, d(z, x')^m \}}\mbb{Q}(dz)\\
  &=: P_{31}+P_{32}.    
\end{align*}
For any $z\in B(x,r)$, by triangle inequality, it holds that $d(z,x')\ge r$, so that
\begin{align*}
P_{31}\le n\int_{B(x,r)}e^{-C_2 nr^m}\mbb{Q}(dz) \le nr^{m}e^{-C_2 nr^m}.
\end{align*}
By noting the fact that $x\le e^{x}$ for all $x\ge 0$, we have
\begin{align*}
    2\left(C_2\cdot\frac{nr^m}{2}\right)\le 2e^{C_2\cdot\frac{nr^m}{2}}. 
\end{align*}
Then, it holds that
\begin{align*}
P_{31}\le nr^{m}e^{-C_2 nr^m}\lesssim
C_2^{-1}e^{-C_2 nr^m/2}.
\end{align*}
On the other hand, by setting $\beta=C_2n$ in Lemma \ref{generalintegral}, we have
\begin{align*}
    P_{32}\le n\int_{\mbb{X}\backslash B(x,r)}e^{ -C_2nd(x,z)^m}\mbb{Q}(dz)\lesssim C_2^{-1} e^{-\beta r^{m}/2}.
\end{align*}
Combining the above parts, we obtain
\begin{align*}
P_3\lesssim C_2^{-1}e^{-\beta r^{m}/2}=C_2^{-1}e^{-C_2 nd(x,x')^{m}/2^{m+1}}.
\end{align*}
Therefore, combining the above bounds on $P_1,P_2,P_3$ and $P_4$, we obtain
\begin{align*}
    P_1+P_2+P_3+P_4\lesssim (1+C_2^{-1})e^{-C_2 nd(x,x')^{m}/2^{m+1}}.
\end{align*}

Now, recall $\psi_n(\xde{x},\xde{x'})$ in Theorem \ref{rateswithconstant}. Then according to Lemma \ref{generalintegral}, we conclude that
\begin{align*}
    n\int_{\XDE{\X}}\psi_n(\xde{x},\xde{x'})\XDE{\mbb{Q}}(d\xde{x'})&\le n\int_{\XDE{\X}}(P_1+P_2+P_3+P_4)^{\alpha}\XDE{\mbb{Q}}(d\xde{x'})\\
    &\lesssim (1+C_2^{-1})^{\alpha}n\int_{\X}e^{-\alpha C_2 nd(x,x')^{m}/2^{m+1}}\mbb{Q}(dx')\\
    &\lesssim (1+C_2^{-1})^{\alpha} (\alpha C_2)^{-1},
\end{align*}
where $\alpha:=p/(16+2p)$. Thus, it yields 
\begin{align*}
    n^2\int_{\XDE{\X}^{2}}\psi_{n}(\xde{x},\xde{x'})\XDE{\mbb{Q}}^{2}(d(\xde{x},\xde{x'}))&\lesssim (1+C_2^{-1})^{\alpha}(\alpha C_2)^{-1}n\\
    &\lesssim \alpha^{-1}((\eta^{-1}M) \vee 1)^{\alpha}(\eta^{-1}M) n=\alpha^{-1}((\eta^{-1}M)^{\alpha+1}\vee 1)n,\\
n\int_{\XDE{\mbb{X}}}\left(n\int_{\XDE{\mbb{X}}}\psi_{n}(\xde{x},\xde{x'})\XDE{\mbb{Q}}(d\xde{x'})\right)^{2}\XDE{\mbb{Q}}(d\xde{x})&\lesssim (1+C_2^{-1})^{2\alpha} (\alpha C_2)^{-2}n\lesssim \alpha^{-2}((\eta^{-1}M)^{2\alpha+2}\vee 1)n.
\end{align*}
\end{proof}

Next, we focus on the estimation of the empirical variance state in 
Lemma \ref{Variancelowerbound}. The proof of the result relies on the density ratio estimation results in \citet{lin2023estimation}. Following the framework therein, for the density ratio
\begin{align*}
    r(x):=\frac{g_1(x)}{g_0(x)}=\frac{\mbb{P}(D=0)}{\mbb{P}(D=1)}\frac{e(x)}{1-e(x)},
\end{align*}
define the density ratio estimator (see \citet[Definition 2.2]{lin2023estimation}) for it as
\begin{align*}
    \hat{r}_{M}(x)=\frac{n_0}{n_1}\frac{K_{M}^{0}(1,\XDE{\cX}_{n})}{M},
\end{align*}
where recall the definition of $K_{M}^{0}(1,\XDE{\cX}_{n})$ in \eqref{matchedtimes}.

\begin{Lemma}\label{lemma:l2convdensityratio}
    Under the assumptions of Theorem \ref{thm:covariate},
    \begin{align*}
        &\mbb{E}\bigg(\left(\frac{n_1}{n_0}\right)^2\mbb{E}\bigg(\bigg(\frac{n_0}{n_1}\frac{K_{M}^{0}(1,\XDE{\cX}_{n})}{M}-\frac{\mbb{P}(D=0)}{\mbb{P}(D=1)}\frac{e(X_1)}{1-e(X_1)}\bigg)^{2}\bigg|\{D_i\}_{i=1}^{n}\bigg)\mathds{1}(n_0>0)\bigg)\\
        &\lesssim \frac{1}{\eta^2}\left(\frac{M}{n\eta}\right)^{1/m}+\delta_{H_1}+(\delta_{H_2}+1)\cdot\frac{1}{\eta^2 M}+\delta_{H_3},
    \end{align*}
and
    \begin{align*}
        &\mbb{E}\bigg(\left(\frac{n_0}{n_1}\right)^2\mbb{E}\bigg(\bigg(\frac{n_1}{n_0}\frac{K_{M}^{1}(1,\XDE{\cX}_{n})}{M}-\frac{\mbb{P}(D=1)}{\mbb{P}(D=0)}\frac{1-e(X_1)}{e(X_1)}\bigg)^{2}\bigg|\{D_i\}_{i=1}^{n}\bigg)\mathds{1}(n_1>0)\bigg)\\
        &\lesssim \frac{1}{\eta^2}\left(\frac{M}{n\eta}\right)^{1/m}+\delta_{H_1}+(\delta_{H_2}+1)\cdot\frac{1}{\eta^2 M}+\delta_{H_3},
    \end{align*}
    where $\delta_{H_1}$-$\delta_{H_3}$ are given in \eqref{delta211}. Moreover, under the assumptions $M^{-1}\log n=o(1), n^{-1}M\log n=o(1)$ and $\eta$ bounded away from $0$, both the upper bounds above simplify to $\left(M/n\right)^{1/m}+M^{-1}.$
\end{Lemma}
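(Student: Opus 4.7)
The plan is to establish the first bound; the second is symmetric, obtained by swapping the roles of $D=0$ and $D=1$. Throughout, I would condition on the treatment assignments $\{D_i\}_{i=1}^n$, which fixes $n_0,n_1$ and reduces the problem to analyzing the conditional mean squared error of the nearest-neighbor density ratio estimator $\hat r_M(x) = (n_0/n_1) K_M^0(1,\XDE{\cX}_n)/M$ from \citet{lin2023estimation}, multiplied by the random factor $(n_1/n_0)^2$ and cut off by $\mathds{1}(n_0 > 0)$. The overall strategy is a careful, fully non-asymptotic bias–variance decomposition combined with concentration bounds on $n_0$, $n_1$ and on the $M$-NN radius that produce the three error terms $\delta_{H_1},\delta_{H_2},\delta_{H_3}$.

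The first step is to write, on $\{n_0 > 0\}$,
\begin{align*}
\mbb{E}\big[(\hat r_M(X_1) - r(X_1))^2 \big|\, \{D_i\}\big]
&\le 2\,\mbb{E}\big[\Var\big(\hat r_M(X_1)\,\big|\, X_1,\{D_i\}\big)\,\big|\,\{D_i\}\big] \\
&\qquad + 2\,\mbb{E}\big[\big(\mbb{E}[\hat r_M(X_1)\,|\,X_1,\{D_i\}] - r(X_1)\big)^2 \,\big|\,\{D_i\}\big].
\end{align*}
Conditional on $X_1=x$ and $\{D_i\}$, the quantity $K_M^0(1,\XDE{\cX}_n)$ is a sum of indicators that the $n_1-1$ treated units (other than unit $1$) have $X_1$ as one of their $M$ nearest control neighbors. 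Hence both the conditional mean and variance reduce to integrals against $g_1$ of functions of the $M$-NN radius from $x$ to the control sample. I would then use the identity expressing $\mbb{E}[\hat r_M(x) \mid X_1=x, \{D_i\}]$ as an average of the density ratio smoothed over an $M$-NN ball, together with the Lipschitz continuity of $g_0,g_1$ (Assumption A.4) and the interior fatness condition (Assumption A.5) to bound the squared bias by a constant times $\eta^{-2}(M/(n\eta))^{1/m}$. The variance is controlled by a direct second-moment computation, exploiting the fact that the sum of indicators has variance bounded by its mean, which produces a term of order $(\eta^2 M)^{-1}$.

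The main obstacle is to translate these conditional bounds into the unconditional bound stated in the lemma while carrying the multiplicative factor $(n_1/n_0)^2$ and avoiding the asymptotic simplifications used in \citet{lin2023estimation}. For this I would introduce the favorable event $\mathcal{G}_n := \{n_0 \ge c n\eta\} \cap \{n_1 \ge c n\eta\}$ for a suitable $c \in (0,1)$. On $\mathcal{G}_n$ the factor $(n_1/n_0)^2$ is bounded by a constant independent of $n,\eta$, so the bias–variance bound from the previous paragraph goes through and contributes the main terms $\eta^{-2}(M/(n\eta))^{1/m}$ and $\eta^{-2}M^{-1}$. On the complement $\mathcal{G}_n^c$, a Chernoff bound on the binomial $n_0$ (paralleling Lemma \ref{lem:tb}) produces the exponentially small factor appearing inside $\delta_{H_1}$ and $\delta_{H_3}$, and combining this with the trivial bound $|\hat r_M - r| \lesssim n_0/\eta + \eta^{-1}$ gives the $\delta_{H_1}$ and $\delta_{H_3}$ contributions. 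Finally, even inside $\mathcal{G}_n$, one must separately handle the rare event that the $M$-NN radius from $X_1$ to the control sample exceeds $\mathrm{diam}(S_0)$, in which case the usual bias estimate from Lipschitz densities does not apply; a Chernoff bound on $\mathrm{Bin}(n_0,\mbb{P}(X\in B(X_1,r)))$ as in Lemma \ref{lem:tb} again provides the required control, contributing the $\delta_{H_2}$ term and the cross terms inside $\delta_{H_1}$.

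The simplification under $M^{-1}\log n = o(1)$, $n^{-1}M\log n=o(1)$, and $\eta$ bounded away from zero is then immediate: each of $\delta_{H_1},\delta_{H_2},\delta_{H_3}$ is dominated either by $(M/n)^{1/m}$ or by $M^{-1}$ (the exponential factors become negligible once $M \to \infty$ faster than $\log n$ and $n - r_0 n\eta - M\log M \to -\infty$), leaving only the two leading-order terms.
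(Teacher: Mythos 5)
Your overall strategy matches the paper's: condition on $\{D_i\}_{i=1}^n$, split on a good event for $n_0$ (the paper uses $I=\{|n_0-np_0|<np_0/2\}$, whose complement is handled by a sub-Gaussian bound and contributes the $1/(n^2\eta^4)\le\delta_{H_1}$ piece), and on the good event run a fully non-asymptotic bias--variance analysis of the $M$-NN density-ratio estimator by re-tracking the $o(n_0^{-\gamma})$ remainders in the proofs of Theorems B.3--B.4 of \citet{lin2023estimation}. Two points, however, need repair. First, your claim that $(n_1/n_0)^2$ is bounded by a constant independent of $\eta$ on $\mathcal{G}_n=\{n_0\ge cn\eta\}\cap\{n_1\ge cn\eta\}$ is false: $n_1$ can be of order $n$ while $n_0$ is only of order $n\eta$, so this factor is merely $O(\eta^{-2})$. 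That factor is in fact the source of the $\eta^{-2}$ prefactors in the stated bound; you have instead folded an $\eta^{-2}$ into the bias and variance of $\hat r_M$ themselves, which happens to land on the right final expression but for the wrong reason (the conditional squared bias of $\hat r_M(x)$ is of order $(M/(n\eta))^{1/m}$ without that prefactor).

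Second, and more substantively, your variance argument covers only half of the law-of-total-variance decomposition. Conditioning on the control sample, the treated indicators are conditionally i.i.d., and ``variance bounded by mean'' controls $\mbb{E}[\Var(\hat r_M(x)\mid X,\{D_i\}_{i=1}^n)]$, giving the $n_0/(n_1M)$-type term. But the second piece, $\Var(\mbb{E}[\hat r_M(x)\mid X,\{D_i\}_{i=1}^n])$, is the variance, over the randomness of the control sample, of the probability mass of the catchment area of $x$; it is not a sum of independent indicators and is not bounded by its mean. Controlling it requires the covariance analysis of catchment indicators for two independent treated points (\citet[Lemma S3.4]{lin2023estimation}), and it is precisely this piece that produces a second $1/M$ contribution and the term $\delta_{H_3}=(n/(M\eta))^2e^{-(1-\log 2)M}$. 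Relatedly, the exponential quantities inside $\delta_{H_1}$--$\delta_{H_3}$ do not arise from the $M$-NN radius exceeding $\mathrm{diam}(S_0)$ as you suggest, but from upper- and lower-tail bounds on the $M$-th order statistic of uniforms governing the catchment probability; the lower-tail bound is in particular the source of the factor $e^{M-r_0n\eta-M\log M+M\log(r_0n\eta)}$ appearing in $\delta_{H_1}$ and $\delta_{H_2}$. Without these two ingredients the sketch does not yield the stated bound.
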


\begin{proof}[Proof of Lemma~\ref{lemma:l2convdensityratio}]
    We start by proving the first two assertions. It suffices to prove the first bound, since the second one follows by symmetry. The proof adapts \citet[proof of Theorem B.3 and B.4]{lin2023estimation} with a slightly more careful estimation. Note that \citet[Proof of Theorem B.3]{lin2023estimation} was not stated in a strictly non-asymptotic manner, i.e., they used some asymptotic statements to simplify the result to obtain the convergence rates. In the following, while citing results from them, we also slightly modify such asymptotic parts for our purposes. We first write the expectation as
    \begin{align}\label{div:IandIc}
        &\mbb{E}\bigg(\left(\frac{n_1}{n_0}\right)^2\mbb{E}\bigg(\bigg(\frac{n_0}{n_1}\frac{K_{M}^{0}(1,\XDE{\cX}_{n})}{M}-\frac{\mbb{P}(D=0)}{\mbb{P}(D=1)}\frac{e(X_1)}{1-e(X_1)}\bigg)^{2}\bigg|\{D_i\}_{i=1}^{n}\bigg)\mathds{1}(n_0>0)\bigg)\nonumber\\
        &=\mbb{E}\bigg(\left(\frac{n_1}{n_0}\right)^2\mbb{E}\bigg(\bigg(\frac{n_0}{n_1}\frac{K_{M}^{0}(1,\XDE{\cX}_{n})}{M}-\frac{\mbb{P}(D=0)}{\mbb{P}(D=1)}\frac{e(X_1)}{1-e(X_1)}\bigg)^{2}\bigg|\{D_i\}_{i=1}^{n}\bigg)\mathds{1}(n_0>0,I)\bigg)\nonumber\\
        &\quad+ \mbb{E}\bigg(\left(\frac{n_1}{n_0}\right)^2\mbb{E}\bigg(\bigg(\frac{n_0}{n_1}\frac{K_{M}^{0}(1,\XDE{\cX}_{n})}{M}-\frac{\mbb{P}(D=0)}{\mbb{P}(D=1)}\frac{e(X_1)}{1-e(X_1)}\bigg)^{2}\bigg|\{D_i\}_{i=1}^{n}\bigg)\mathds{1}(n_0>0,I^c)\bigg),
    \end{align}
    where $I=\{|n_0-np_0|<np_0/2\}$, so that on $I$, it holds that $np_0/2< n_0< 3np_0/2$ and according to \eqref{concn1}, we have
    \begin{align}\label{concp0t}
        \mbb{P}(I^c)\le 2e^{-\frac{2t^2}{np_0(1-p_0)}}=2e^{-\frac{np_0}{2(1-p_0)}}.
    \end{align}
    On $I$, with our assumption $M\le C_0 n \eta$ and $\eta\le p_0\le 1$, it holds that
    \begin{align*}
        \left(\frac{4}{g_{0,\min}V_m}\right)^{1/m}\left(\frac{M}{n_0}\right)^{1/m}\le \left(\frac{4}{g_{0,\min}V_m}\right)^{1/m}\left(\frac{C_0n\eta}{n_0}\right)^{1/m}\le \left(\frac{4}{g_{0,\min}V_m}\right)^{1/m}\left(2C_0\right)^{1/m}.
    \end{align*}
    Then, we set $\delta:=2\left(\frac{4}{g_{0,\min}V_m}\right)^{1/m}\left(2C_0\right)^{1/m}$ in \citet[Proof of Theorem B.3]{lin2023estimation} such that on $I$, 
    \begin{align*}
        \delta_n:=\left(\frac{4}{g_{0,\min}V_m}\right)^{1/m}\left(\frac{M}{n_0}\right)^{1/m}\le \delta/2.
    \end{align*}
    With this inequality holding on $I$, we can follow \citet[Proof of Theorem B.3, S3.31]{lin2023estimation} to obtain the upper bound: on $I$,
\begin{align*}
        \mbb{E}\big[\hat{r}_M(x)|\{D_i\}_{i=1}^{n}\big]&\le \frac{g_1(x)+L\delta_n}{g_0(x)-2L\delta_n} \frac{3np_0/2}{3np_0/2+1}+\frac{n}{M} e^{-(1-\log 2)M},
\end{align*}
where we modified $o(n_0^{-\gamma})$ in \citet[Proof of Theorem B.3, S3.31]{lin2023estimation} replaced by a non-asymptotic rate $\frac{n}{M} e^{-(1-\log 2)M}$ according to the first inequality there in \citet[Proof of Theorem B.3, S3.30]{lin2023estimation}, instead of using its second inequality $o(n_0^{-\gamma})$ which is an asymptotic rate. In fact, it may not even be of smaller order as we do not restrict the parameter $p_0\ge \eta$ to be fixed.

Similar modifications take place in \citet[Proof of Theorem B.3, S3.32]{lin2023estimation} for the lower bound, where with $r_{\text{ratio}}$ as in Assumption A.4 in Section \ref{AssumpA}, we replace the asymptotic $o(n_0^{-\gamma})$ term by 
$$
C(1-r_{\text{ratio}})\frac{n}{M}e^{M-r_{\text{ratio}}np_0-M\log M+M\log (r_{\text{ratio}}np_0)}
$$
according to the first inequality derived in their proof, instead of using its second inequality $o(n_0^{-\gamma})$, an asymptotic rate. Therefore, we have the modified lower bound accordingly: on $I$,
\begin{align*}
    \mbb{E}\big[\hat{r}_M(x)|\{D_i\}_{i=1}^{n}\big]&\ge\frac{g_1(x)-L\delta_n}{g_0(x)+2L\delta_n} \frac{np_0/2}{np_0/2+1}-C(1-r_{\text{ratio}})\frac{n}{M}e^{M-r_{\text{ratio}}np_0-M\log M+M\log (r_{\text{ratio}}np_0)}.
\end{align*}
Combining the upper and lower bound, it yields that on $I$, 
\begin{align}\label{mod:bias}
    \left|\mbb{E}\big[\hat{r}_{M}(x)|\{D_i\}_{i=1}^{n}\big]-r(x)\right|&\lesssim \delta_n+\frac{1}{np_0}+\frac{n}{M}\left( e^{-(1-\log 2)M} +e^{M-r_{\text{ratio}}np_0-M\log M+M\log (r_{\text{ratio}}np_0)}\right).
\end{align}

As for the variance, we also need to do similar modifications. By the law of total variance,
\begin{align}\label{eq:ratevar1}
  \Var\big[\hat{r}_M(x)|\{D_i\}_{i=1}^{n}\big] = \mbb{E}\Big[\Var\Big[\hat{r}_M(x) \Biggiven X,\{D_i\}_{i=1}^{n}\Big]\Big] + \Var\Big[\mbb{E}\Big[\hat{r}_M(x) \Biggiven X,\{D_i\}_{i=1}^{n}\Big]\Big].
\end{align}

For the first term in \eqref{eq:ratevar1}, we follow \citet[Proof of Theorem B.3, S3.34]{lin2023estimation} to obtain that on $I$,
\begin{align*}
  \mbb{E}\Big[\Var\Big[\hat{r}_M(x) \Biggiven X,\{D_i\}_{i=1}^{n}\Big]\Big]\le \frac{n_0}{n_1M} \E\big[\hat{r}_M(x)|\{D_i\}_{i=1}^{n}\big],
\end{align*}
where again we only used its first inequality and bound the expected value according to our modified version \eqref{mod:bias}. As for the second part, according to \citet[Proof of Theorem B.3, S3.36 and S3.37]{lin2023estimation}, we have that on $I$,
\begin{align*}
    \Var\Big[\mbb{E}\Big[\hat{r}_M(x) \Biggiven X,\{D_i\}_{i=1}^{n}\Big]\Big]\lesssim \frac{1}{M}+ \left(\frac{n}{M}\right)^2 e^{-(1-\log 2)M}.
\end{align*}
Here, again, we did not choose to use their asymptotic rate $o(n_0^{-\gamma})$ in the second inequality in \citet[Proof of Theorem B.3, S3.36]{lin2023estimation}. Instead, we used their first inequality there in the equation and it is proven to be bounded by $\left(\frac{n}{M}\right)^2 e^{-(1-\log 2)M}$ in \citet[Proof of Theorem B.3, S3.30]{lin2023estimation}.

Then, combining the bias and variance convergence rates, we have on $I$,
\begin{align*}
    &\mbb{E}\left(|\hat{r}_{M}(x)-r(x)|^2|\{D_i\}_{i=1}^{n}\right)\\
    &\lesssim \delta_n^2+\frac{1}{n^2p_0^2}+\left(\frac{n}{M}\right)^2\left(e^{-(1-\log 2)M}+e^{M-r_{\text{ratio}}np_0-M\log M+M\log (r_{\text{ratio}}np_0)}\right)^2\\
    &\quad+\frac{n}{n_1M}\Bigg(1+\delta_n+\frac{1}{np_0}+\frac{n}{M}\left(e^{-(1-\log 2)M}+e^{M-r_{\text{ratio}}np_0-M\log M+M\log (r_{\text{ratio}}np_0)}\right) \Bigg)\\
    &\quad+\frac{1}{M}+\left(\frac{n}{M}\right)^2e^{-(1-\log 2)M}.
\end{align*}
Plugging this pointwise convergence rate in \citet[Proof of Theorem B.4]{lin2023estimation}, we have that on $I$, 
\begin{align*}
    &\mbb{E}\left(|\hat{r}_{M}(X_1)-r(X_1)|^2|\{D_i\}_{i=1}^{n}\right)\\
    &\lesssim \delta_n+\frac{1}{n^2p_0^2}+\left(\frac{n}{M}\right)^2\left(e^{-(1-\log 2)M}+e^{M-r_{\text{ratio}}np_0-M\log M+M\log (r_{\text{ratio}}np_0)}\right)^2\\
    &\quad+\frac{n}{\left(n-\frac{np_0}{2}\right)M}\left(1+\delta_n+\frac{1}{np_0}+\frac{n}{M}\left(e^{-(1-\log 2)M}+e^{M-r_{\text{ratio}}np_0-M\log M+M\log (r_{\text{ratio}}np_0)}\right)\right)\\
    &\quad+\frac{1}{M}+\left(\frac{n}{M}\right)^2 e^{-(1-\log 2)M}.
\end{align*}

Now, coming back to \eqref{div:IandIc}, we plug the above bound into the first term and directly bound the second term according to \eqref{concp0t} and obtain: on $I$,
\begin{align*}
    &\mbb{E}\bigg(\left(\frac{n_1}{n_0}\right)^2\mbb{E}\bigg(\bigg(\frac{n_0}{n_1}\frac{K_{M}^{0}(1,\XDE{\cX}_{n})}{M}-\frac{\mbb{P}(D=0)}{\mbb{P}(D=1)}\frac{e(X_1)}{1-e(X_1)}\bigg)^{2}\bigg|\{D_i\}_{i=1}^{n}\bigg)\mathds{1}(n_0>0,I)\bigg)\\
    &\lesssim \frac{1}{p_0^2}\delta_n+\frac{1}{n^2p_0^4}+\left(\frac{n}{Mp_0}\right)^2\left(e^{-(1-\log 2)M}+e^{M-r_{\text{ratio}}np_0-M\log M+M\log (r_{\text{ratio}}np_0)}\right)^2\\
    &\quad+\frac{n}{p_0^2\left(n-\frac{np_0}{2}\right)M}\left(1+\delta_n+\frac{1}{np_0}+\frac{n}{M}\left(e^{-(1-\log 2)M}+e^{M-r_{\text{ratio}}np_0-M\log M+M\log (r_{\text{ratio}}np_0)}\right)\right)\\
    &\quad+\frac{1}{p_0^2M}+\left(\frac{n}{Mp_0}\right)^2 e^{-(1-\log 2)M},
\end{align*}
and on $I^c$, since $\eta\le p_0\le 1$ and $C_1\le n\eta^2$ by our assumption, we have
\begin{align*}
    &\mbb{E}\bigg(\left(\frac{n_1}{n_0}\right)^2\mbb{E}\bigg(\bigg(\frac{n_0}{n_1}\frac{K_{M}^{0}(1,\XDE{\cX}_{n})}{M}-\frac{\mbb{P}(D=0)}{\mbb{P}(D=1)}\frac{e(X_1)}{1-e(X_1)}\bigg)^{2}\bigg|\{D_i\}_{i=1}^{n}\bigg)\mathds{1}(n_0>0,I^c)\bigg)\\
    &\lesssim \frac{n^2}{Mp_0^2}e^{-\frac{np_0}{2(1-p_0)}} \le \frac{n^2}{\eta^2}e^{-\frac{n\eta}{2(1-\eta)}} = \frac{n^6\eta^6}{n^4\eta^8}e^{-\frac{n\eta}{2}} \lesssim \frac{1}{n^2\eta^4} \le \delta_{H_1}.
\end{align*}
Noting that on $I$,
\begin{align*}
    \delta_n=\left(\frac{4}{g_{0,\min}V_m}\right)^{1/m}\left(\frac{M}{n_0}\right)^{1/m}\le \left(\frac{4}{g_{0,\min}V_m}\right)^{1/m}\left(\frac{2M}{np_0}\right)^{1/m},
\end{align*}
Combining the above two results gives the desired bound.

Moreover, if we assume $M^{-1}\log n=o(1), n^{-1}M\log n=o(1)$ and that $\eta$ is bounded away from $0$, it is straightforward to see that the terms $\delta_{H_1}$-$\delta_{H_3}$ are of smaller order compared to the other summands in the first two assertions. Indeed, considering $\delta_{H_1}$ for instance, using that $\eta$ is bounded below, we have
\begin{align*}
    \delta_{H_1}&\lesssim \frac{1}{n^2}+\left(\frac{n}{M}\right)^2\left(e^{-2(1-\log 2)M}+e^{2(M-r_0n\eta-M\log M+M\log (r_0n\eta))}\right)\\
    &=\frac{1}{n^2}+\left(\frac{n}{M}\right)^2e^{-2(1-\log 2)\frac{M}{\log n}\log n}+\left(\frac{n}{M}\right)^2e^{-2n\left(r_0\eta+\frac{M\log M}{n}-\frac{M}{n}-\frac{M\log(r_0\eta n)}{n}\right)}.
\end{align*}
Now, since $M^{-1}\log n=o(1)$ and $n^{-1}M\log n=o(1)$, it follows that $\delta_{H_1}\lesssim \frac{1}{n^2}$, which is indeed a smaller order term compared to $(M/n)^{1/m}$. We can argue similarly for $\delta_{H_2}$ and $\delta_{H_3}$, yielding the final assertion.
\end{proof}

We are now ready to prove the lower bound on the variance of $E_n$, stated in Lemma~\ref{Variancelowerbound}.

\begin{proof}[Proof of Lemma \ref{Variancelowerbound}]
Recall $E_n$ from \eqref{eq2:F} and write
\begin{align*}
    E_n &= \frac{1}{n}\sum_{i=1}^{n}(\mu_{1}(X_i)-\mu_{0}(X_i))+\frac{1}{n}\sum_{i=1}^{n}(2D_i-1)\bigg(1+\frac{K^{D_i}_{M}(i,\XD{\cX_n})}{M}\bigg)\bm{\varepsilon_i}=:E_{n,1}+E_{n,2},
\end{align*}
where the first term is conditionally independent of the second term given $\XD{\cX}_n$. Thus, by the law of total variance and recalling that $\sigma_\omega(x)^2:=\mbb{E} [(Y(\omega)-\mu_{\omega}(X))^2 | X=x]$ for $\omega \in \{0,1\}$, we have
\begin{align}\label{eq:var}
    \Var E_n&=\mbb{E}\Var (E_{n,1}+E_{n,2}|\XD{\cX}_n)+\Var \mbb{E}(E_{n,1}+E_{n,2}|\XD{\cX}_n)
    \nonumber\\& =\mbb{E}\bigg(\frac{1}{n^2}\sum_{i=1}^{n}(2D_i-1)^2\bigg(1+\frac{K^{D_i}_{M}(i,\XD{\cX}_n)}{M}\bigg)^2\sigma_{D_i}(X_i)^2\bigg)+\Var E_{n,1}\nonumber\\
    &=\mbb{E}\bigg(\frac{1}{n^2}\sum_{i=1}^{n}\bigg(1+\frac{K^{D_i}_{M}(i,\XD{\cX}_n)}{M}\bigg)^2\sigma_{D_i}(X_i)^2 \bigg) + \frac{1}{n}\Var (\mu_1(X)-\mu_0(X)).
\end{align}
Note in particular that this yields the lower bound in the first assertion.

To obtain the upper bound, note that for the first term in \eqref{eq:var}, following \cite{lin2023estimation}, we can write
\begin{align*}
    &\frac{1}{n}\sum_{i=1}^{n}\bigg(1+\frac{K^{D_i}_{M}(i,\XD{\cX}_n)}{M}\bigg)^2\sigma_{D_i}(X_i)^2
    \\& = \frac{1}{n}\sum_{i=1,D_i=1}^{n}\bigg(1+\frac{K^{1}_{M}(i,\XD{\cX}_n)}{M}\bigg)^2\sigma_{1}^2(X_i) + \frac{1}{n}\sum_{i=1,D_i=0}^{n}\bigg(1+\frac{K^{0}_{M}(i,\XD{\cX}_n)}{M}\bigg)^2\sigma_{0}^2(X_i)
    \\& = \bigg(\frac{1}{n}\sum_{i=1,D_i=1}^{n}\bigg(\frac{1}{e(X_i)}\bigg)^2\sigma_{1}^2(X_i) + \frac{1}{n}\sum_{i=1,D_i=0}^{n}\bigg(\frac{1}{1-e(X_i)}\bigg)^2\sigma_{0}^2(X_i)\bigg)
    \\&\qquad +\frac{1}{n}\sum_{i=1,D_i=1}^{n}\bigg(\bigg(1+\frac{K^{1}_{M}(i,\XD{\cX}_n)}{M}\bigg)^2-\left(\frac{1}{e(X_i)}\right)^2\bigg)\sigma_{1}^2(X_i)
    \\&\qquad + \frac{1}{n}\sum_{i=1,D_i=0}^{n}\bigg(\bigg(1+\frac{K^{0}_{M}(i,\XD{\cX}_n)}{M}\bigg)^2-\bigg(\frac{1}{1-e(X_i)}\bigg)^2\bigg)\sigma_{0}^2(X_i)
    \\&=: J_1+J_2+J_3.
\end{align*}
Since $\{(X_i,D_i,Y_i)\}_{i=1}^{n}$ are i.i.d., we have
\begin{align*}
    \mbb{E}J_1= \mbb{E}\left(D \frac{\sigma_1^2(X)}{e(X)^2}+(1-D)\frac{\sigma^2_0(X)}{(1-e(X))^2}\right)=\mbb{E}\left(\frac{\sigma_1^2(X)}{e(X)}+\frac{\sigma^2_0(X)}{1-e(X)}\right).
\end{align*}
Thus from \eqref{eq:var}, with $\sigma^2$ defined at \eqref{def:sigma}, we obtain
\begin{equation}\label{eq:vardiff}
|n\Var F_n-\sigma^2|\le \mbb{E}|J_2+J_3|.    
\end{equation}
Following \citet[S4.1.\ Proof of Lemma C.1]{lin2023estimationsupp} and Assumption \ref{AssumpD}(2), we have
\begin{align}\label{J2}
    \mbb{E}|J_2|\lesssim \mbb{E}\left(D_1\left(J_{21}J_{22}\right)^{\frac{1}{2}}\right) \lesssim \mbb{E}\left(D_1\left(\sqrt{J_{21}} + J_{21}\right)\right),
\end{align}
where
\begin{align*}
    J_{21}=\mbb{E}\bigg(\bigg(\frac{K_{M}^{1}(1,\XDE{\cX}_{n})}{M}-\frac{1-e(X_1)}{e(X_1)}\bigg)^{2}\bigg|\{D_i\}_{i=1}^{n}\bigg)\mathds{1}(n_1>0),
\end{align*}
and
\begin{align*}
    J_{22}=\mbb{E}\bigg(\bigg(2+\frac{K_{M}^{1}(1,\XDE{\cX}_{n})}{M}-\frac{1-e(X_1)}{e(X_1)}\bigg)^{2}\bigg|\{D_i\}_{i=1}^{n}\bigg)\mathds{1}(n_1>0).
\end{align*}
Now using the simple inequality that $(a+b)^2 \le 2(a^2+b^2)$ for $a,b \in \mathbb{R}$, it holds that 
\begin{align*}
    J_{21}&=\left(\frac{n_0}{n_1}\right)^2\mbb{E}\bigg(\bigg(\frac{n_1}{n_0}\frac{K_{M}^{1}(1,\XDE{\cX}_{n})}{M}-\frac{n_1}{n_0}\frac{1-e(X_1)}{e(X_1)}\bigg)^{2}\bigg|\{D_i\}_{i=1}^{n}\bigg)\mathds{1}(n_1>0)\\
    &\lesssim \left(\frac{n_0}{n_1}\right)^2\mbb{E}\bigg(\bigg(\frac{n_1}{n_0}\frac{K_{M}^{1}(1,\XDE{\cX}_{n})}{M}-\frac{\mbb{P}(D=1)}{\mbb{P}(D=0)}\frac{1-e(X_1)}{e(X_1)}\bigg)^{2}\bigg|\{D_i\}_{i=1}^{n}\bigg)\mathds{1}(n_1>0)\\
    &\quad + \left(\frac{n_0}{n_1}\right)^2\left(\left(\frac{\mbb{P}(D=1)}{\mbb{P}(D=0)}\frac{1-e(X_1)}{e(X_1)}-\frac{n_1}{n_0}\frac{1-e(X_1)}{e(X_1)}\right)^{2}\right)\mathds{1}(n_1>0)\\
    &=:J_{211}+J_{212}.
\end{align*}
According to \citet{lin2023estimation}, the expression inside the expectation in $J_{211}$, i.e.,
\begin{align*}
    \frac{n_1}{n_0}\frac{K_{M}^{1}(1,\XDE{\cX}_{n})}{M}-\frac{\mbb{P}(D=1)}{\mbb{P}(D=0)}\frac{1-e(X_1)}{e(X_1)}=\frac{n_1}{n_0}\frac{K_{M}^{1}(1,\XDE{\cX}_{n})}{M}-\frac{g_{0}(X_1)}{g_{1}(X_1)}
\end{align*}
is the difference between the density ratio $g_0/g_1$ and the density ratio estimator based on two samples with sizes $n_0$ and $n_1$, respectively, with $n_0+n_1=n$; here $g_0$ and $g_1$ are defined in Assumption \ref{AssumpA}. Also, we make a convention that when $n_1=0$ or $n_0=0$, $0/0=0$ to make the bound well defined. 

Recall that $p_0:=\mbb{P}(D=0)$ and set $p_1:=\mbb{P}(D=1)$. Noting that $n_1$ is a binomal random variable with parameters $n$ and $p_1$ and $n_0$ is also a binomial random variable with parameters $n$ and $p_0$, according to Hoeffding bound for sub-Gaussian random variables (see \cite[Proposition 2.5]{wainwright2019high}), we have for all $t\ge 0$,
\begin{align}\label{concn1}
    \mbb{P}(|n_1-np_1|\ge t)&\le 2 e^{-\frac{2t^2}{np_1(1-p_1)}},\nonumber\shortintertext{or equivalently}
    \mbb{P}(|n_0-np_0|\ge t)&\le 2 e^{-\frac{2t^2}{np_0(1-p_0)}}.
\end{align}

According to Lemma \ref{lemma:l2convdensityratio}, we have
\begin{align*}
    \mbb{E}J_{211}\lesssim \frac{1}{\eta^2}\left(\frac{M}{n\eta}\right)^{1/m}+\delta_{H_1}+(\delta_{H_2}+1)\cdot\frac{1}{\eta^2 M}+\delta_{H_3}.
\end{align*}
Moreover, we have
\begin{align}\label{boundJ212}
    \mbb{E}J_{212}&=\mbb{E}\bigg(\left(\frac{n_0}{n_1}\right)^2\bigg(\left(\frac{\mbb{P}(D=1)}{\mbb{P}(D=0)}\frac{1-e(X_1)}{e(X_1)}-\frac{n_1}{n_0}\frac{1-e(X_1)}{e(X_1)}\right)^{2}\bigg)\mathds{1}(n_1>0,|n_1-np_1|<t)\bigg)\nonumber\\
    &\quad+\mbb{E}\bigg(\left(\frac{n_0}{n_1}\right)^2\bigg(\left(\frac{\mbb{P}(D=1)}{\mbb{P}(D=0)}\frac{1-e(X_1)}{e(X_1)}-\frac{n_1}{n_0}\frac{1-e(X_1)}{e(X_1)}\right)^{2}\bigg)\mathds{1}(n_1>0,|n_1-np_1|\ge t)\bigg).
\end{align}
Noting from the boundedness of the density ratio that $(1-e(X_1))/e(X_1) \le \eta^{-1}$, on $\{|n_1-np_1|<t\}$,
by setting $t=np_1/2 \wedge (np_1)^{2/3}$, we have
\begin{align*}
    &\left(\frac{n_0}{n_1}\right)^2\left(\frac{\mbb{P}(D=1)}{\mbb{P}(D=0)}\frac{1-e(X_1)}{e(X_1)}-\frac{n_1}{n_0}\frac{1-e(X_1)}{e(X_1)}\right)^{2}\\
    &=\left(\frac{n_0}{n_1}\right)^2\left(\frac{1-e(X_1)}{e(X_1)}\right)^2\left(\frac{\mbb{P}(D=1)}{\mbb{P}(D=0)}-\frac{n_1}{n_0}\right)^{2}\\
    &\lesssim \frac{1}{\eta^4 n_1^2}\left(p_1n_0 - p_0n_1\right)^2\\
    &\lesssim \frac{1}{\eta^4 n_1^2} \left[p_1^2(n_0-np_0)^2 + p_0^2(n_1-np_1)^2\right] \lesssim \frac{(np_1)^{4/3}}{\eta^4 (np_1/2)^2} \lesssim \frac{1}{\eta^6 n^{2/3}}.
\end{align*}
Also, on $\{|n_1-np_1|\ge t\}$, which occurs with probability not larger than $2e^{-\frac{2(np_1)^{1/3}}{1-p_1}}\le 2e^{-\frac{2(n\eta)^{1/3}}{1-\eta}}$ by \eqref{concn1}, we simply bound
\begin{align*}
    \left(\frac{n_0}{n_1}\right)^2\left(\frac{\mbb{P}(D=1)}{\mbb{P}(D=0)}\frac{1-e(X_1)}{e(X_1)}-\frac{n_1}{n_0}\frac{1-e(X_1)}{e(X_1)}\right)^{2}\lesssim \frac{n^2}{\eta^4}.
\end{align*}
Together, we have
\begin{align*}
    \mbb{E}J_{212}\lesssim \frac{1}{\eta^{6}n^{2/3}}+\frac{n^2}{\eta^4}e^{-\frac{2(n\eta)^{1/3}}{1-\eta}}.
\end{align*}
Note that since $C_1\le n\eta^2$, we have that 
\begin{align}\label{deltaH1inJ212}
    \frac{n^2}{\eta^4}e^{-\frac{2(n\eta)^{1/3}}{1-\eta}} = \frac{n^8\eta^8}{n^6\eta^{12}}e^{-2(n\eta)^{1/3}} \lesssim \frac{1}{n^2\eta^4} \le \delta_{H_1}.
\end{align}
Consequently, combining these bounds
, we obtain
\begin{align*}
    \mbb{E}J_{21}\lesssim \frac{1}{\eta^2}\left(\frac{M}{n\eta}\right)^{1/m}+\delta_{H_1}+(\delta_{H_2}+1)\cdot\frac{1}{\eta^2 M}+\delta_{H_3}+\frac{1}{\eta^{6}n^{2/3}}.
\end{align*}
By our assumption, the density ratio $\frac{\mbb{P}(D=1)}{\mbb{P}(D=0)}\frac{1-e(X_1)}{e(X_1)}$ is bounded and hence $J_{21}\lesssim \sqrt{J_{21}}$. Plugging the bounds in \eqref{J2} yields
\begin{align}\label{J2boundtotal}
    \mbb{E}|J_2|\lesssim \frac{1}{\eta}\left(\frac{M}{n\eta}\right)^{1/(2m)}+\delta_{H_1}^{1/2}+(\delta_{H_2}^{1/2}+1)\cdot\frac{1}{\eta M^{1/2}}+\delta_{H_3}^{1/2}+\frac{1}{\eta^{3}n^{1/3}}.
\end{align}
By symmetry, the same bound also holds for $\mbb{E}|J_3|$, which together with \eqref{eq:vardiff} yields the first upper bound.

Finally, as pointed out in Lemma \ref{lemma:l2convdensityratio}, under the assumptions $M^{-1}\log n=o(1), n^{-1}M\log n=o(1)$ and $\eta$ bounded away from $0$, the terms involving $\delta_{H_1}$-$\delta_{H_3}$ are indeed of smaller order compared to the other terms, and the bound thus simplifies to
\begin{align*}
     \left(\frac{M}{n}\right)^{1/(2m)}+\frac{1}{M^{1/2}}+\frac{1}{n^{1/3}},
\end{align*}
completing the proof.
\end{proof}

\section{Proofs for Gaussian Approximation Bounds in Section~\ref{sec:mainresult}}
\subsection{Proof of Theorem \ref{thm:covariate}}\label{pf:covariate} We follow the steps outlined in Section \ref{sec:gauss}.

    \textbf{Step (1)}. We apply Theorem \ref{rateswithconstant} to bound $I_0$ as in Section \ref{sec:gauss}. For this, we need to bound the terms $S_{i}$, $i\in [5]$.
    First, for $S_1$, according to \eqref{def:cMetap}, Lemma \ref{boundpsin} and Lemma \ref{Variancelowerbound}, we have
    \begin{align*}
        S_1&\lesssim (((\zeta\eta)^{-1}M)^{\frac{20}{8+p}}\vee 1)\cdot \alpha^{-\frac{1}{2}}((\eta^{-1}M)^{\frac{\alpha+1}{2}}\vee 1)n^{-\frac{1}{2}}\\
        &=\alpha^{-\frac{1}{2}}\cdot (((\zeta\eta)^{-1}M)^{\frac{20}{8+p}}\vee 1)\cdot ((\eta^{-1}M)^{\frac{16+3p}{32+4p}}\vee 1)\cdot n^{-\frac{1}{2}}.
    \end{align*}
    Similarly, we also have
    \begin{align*}
        S_2&\lesssim (((\zeta\eta)^{-1}M)^{\frac{20}{8+p}}\vee 1)\cdot \alpha^{-1}((\eta^{-1}M)^{\alpha+1}\vee 1)n^{-\frac{1}{2}}\\
        &=  \alpha^{-1}\cdot (((\zeta\eta)^{-1}M)^{\frac{20}{8+p}}\vee 1)\cdot ((\eta^{-1}M)^{\frac{16+3p}{16+2p}}\vee 1)\cdot n^{-\frac{1}{2}}.
    \end{align*}

    For the rest of $S_i$'s, using \eqref{eq:boundgamman} we have
    \begin{align*}
        S_3\lesssim (((\zeta\eta)^{-1}M)^{\frac{20}{8+p}}\vee 1)\cdot n^{-\frac{1}{2}}, \;\quad \;
        S_4\lesssim (((\zeta\eta)^{-1}M)^{\frac{40}{8+p}}\vee 1)\cdot n^{-\frac{1}{2}},
    \end{align*}
    and
    \begin{align*}
        S_5\lesssim (((\zeta\eta)^{-1}M)^{\frac{30}{8+p}}\vee 1)\cdot n^{-\frac{1}{2}}.
    \end{align*}
    Putting the bounds together, Theorem \ref{rateswithconstant} yields
    \begin{align*}
        I_0&={\sf d}_{K}\left(\frac{E_n-\mbb{E}E_n}{\sqrt{\Var E_n}},\mcal{N}(0,1)\right)\\
        &\lesssim \frac{\alpha^{-1} (((\zeta\eta)^{-1}M)^{\frac{20}{8+p}}\vee 1)\cdot ((\eta^{-1}M)^{\frac{16+3p}{16+2p}}\vee 1)}{n^{\frac{1}{2}}}+ \frac{(((\zeta\eta)^{-1}M)^{\frac{40}{8+p}}\vee 1)}{n^{\frac{1}{2}}}.
    \end{align*}

\textbf{Step (2).} To bound $I_1$ as in Section \ref{sec:gauss}, note that by the triangle inequality, we have
\begin{align*}
    I_1={\sf d}_{K}\left(\sqrt{n}\frac{E_{n}-\mbb{E}E_n}{\sigma},\mcal{N}(0,1)\right)\le I_0+{\sf d}_{K}\left(\frac{\sigma}{\sqrt{n\Var E_n}}\mcal{N}(0,1),\mcal{N}(0,1)\right),
\end{align*}
where $\sigma^2$ is defined at \eqref{def:sigma}. Now observing that for $a > 0$,
\begin{equation}\label{eq:dkbd1}
    \mathsf{d}_{K}(a \mcal{N},\mcal{N})\le a\vee a^{-1}-1,
\end{equation} 
we have
\begin{align*}
    I_1\le I_0+\left|\frac{\sigma}{\sqrt{n\Var E_n}}-1\right|+\left|\frac{\sqrt{n\Var E_n}}{\sigma}-1\right|.
\end{align*}
Note that if $\sigma \ge \sqrt{n\Var E_n}$, then using that $n \Var E_n$ is bounded below by a constant, we obtain
\begin{align}\label{boundratioofvariance}
\frac{\sigma}{\sqrt{n\Var E_n}}-1 = \frac{\sigma^2 - n\Var E_n}{n\Var E_n (1+\sigma/\sqrt{n\Var E_n})} \lesssim\sigma^2 - n\Var E_n\lesssim \sigma^2 - n\Var E_n.
\end{align}
The same conclusion also holds when $\sigma < \sqrt{n\Var E_n}$. Thus, according to Lemma \ref{Variancelowerbound},
\begin{align*}
    I_1\lesssim I_0+\frac{1}{\eta}\left(\frac{M}{n\eta}\right)^{1/(2m)}+\delta_{H_1}^{1/2}+(\delta_{H_2}^{1/2}+1)\cdot\frac{1}{\eta M^{1/2}}+\delta_{H_3}^{1/2}+\frac{1}{\eta^{3}n^{1/3}}.
\end{align*}

\textbf{Step (3).} Finally we bound 
$$
I_2={\sf d}_{K}\left(\sqrt{n}\frac{\hat{\tau}_{M}^{bc}-\tau}{\sigma},\mcal{N}(0,1)\right).
$$
Recall from \eqref{eq2:F} that $\hat{\tau}_{M}^{bc}=E_n+(B_M-\hat{B}_{M})=:E_n+B$. Then, for any $t\in\mbb{R}$ and $\epsilon_0>0$, we have
\begin{align*}
    &\left|\mbb{P}\left(\sqrt{n}(\hat{\tau}_{M}^{bc}-\tau)\le t\right)-\mbb{P}\left(\sqrt{n}(E_n-\tau)\le t\right)\right|\\&\le \left|\mbb{P}\left(\sqrt{n}(E_n-\tau)+\sqrt{n}B\le t\right)-\mbb{P}\left(\sqrt{n}(E_n-\tau)\le t\right)\right|
    \\&\le \mbb{P}\left(\sqrt{n}|B|> \epsilon_0\right)+2I_1+\mbb{P}(\mcal{N}(0,\sigma^2)\in (t-\epsilon_0, t-\epsilon_0]).
\end{align*}
Following \citet[Proof of Lemma C.3]{lin2023estimationsupp} under Assumption B.4 in Section \ref{AssumpB}, it holds that
\begin{align}\label{biasboundcovariate}
    \mbb{E}|B|&\lesssim \bigg(\mbb{E}\bigg(\left(\frac{n}{n_0}\right)^{k/m}\mathds{1}(n_0>0)\bigg)\vee \mbb{E}\bigg(\left(\frac{n}{n_1}\right)^{k/m}\mathds{1}(n_1>0)\bigg)\bigg)\nonumber\\
    &\qquad\bigg(\left(\frac{M}{n}\right)^{k/m}+\max_{l\in [k-1]}\bigg(n^{-\gamma_{l}}\bigg(\frac{M}{n}\bigg)^{l/m}\bigg)\bigg),
\end{align}
where $k=\lfloor m/2 \rfloor+1$ and $\gamma_l$ is defined in Assumption B.4 in Section \ref{AssumpB} for $l\in[k-1]$. Moreover, similar to bounding $J_{212}$ in \eqref{boundJ212}, setting $t=np_0/2\wedge (np_0)^{2/3} $ in \eqref{concn1}, we have
\begin{align}\label{adderrorinbias}
    \mbb{E}\bigg(\left(\frac{n}{n_0}\right)^{k/m}\mathds{1}(n_0>0)\bigg)\lesssim \eta^{-k/m}+n^{k/m}e^{-\frac{2(n\eta)^{1/3}}{1-\eta}}\lesssim \eta^{-k/m}+\delta_{H_1},
\end{align}
where we applied \eqref{deltaH1inJ212} for the last inequality. Similarly, we can have the same bound for $\mbb{E}\big(\left(n/n_1\right)^{k/m}\mathds{1}(n_1>0)\big)$ by symmetry. Then, by Markov's inequality, we have
\begin{align*}
    \mbb{P}\left(\sqrt{n}|B|> \epsilon_0\right)\lesssim (\eta^{-k/m}+\delta_{H_1})\big(M^{k/m}n^{-k/m+1/2}+\max_{l\in [k-1]}\big(n^{-\gamma_{l}-l/m+1/2}M^{l/m}\big)\big)\epsilon_0^{-1}.
\end{align*}
On the other hand, by boundedness of the Gaussian density, we have
\begin{align*}
    \mbb{P}(\mcal{N}(0,\sigma^2)\in (t-\epsilon_0,t+\epsilon_0] \lesssim \epsilon_0.
\end{align*}
Choosing $\epsilon_0>0$ optimally, we thus have that have for any $t\in\mbb{R}$,
\begin{align*}
    &\left|\mbb{P}\left(\sqrt{n}(\hat{\tau}_{M}^{bc}-\tau)\le t\right)-\mbb{P}\left(\sqrt{n}(E_n-\tau)\le t\right)\right|\\
    &\lesssim I_1+(\eta^{-k/(2m)}+\delta_{H_1}^{1/2})\big(M^{k/(2m)}n^{-k/(2m)+1/4}+\max_{l\in [k-1]}\big(n^{-\gamma_{l}/2-l/(2m)+1/4}M^{l/(2m)}\big)\big).
\end{align*}
Therefore, taking supremum over all $t \in \mbb{R}$, we can conclude that
\begin{align*}
    I_2 &\le {\sf d}_{K}\left(\sqrt{n}\frac{\hat{\tau}_{M}^{bc}-\tau}{\sigma},\sqrt{n}\frac{E_{n}-\tau}{\sigma}\right) + I_1\\
    &\lesssim I_1+(\eta^{-k/(2m)}+\delta_{H_1}^{1/2})\big(M^{k/(2m)}n^{-k/(2m)+1/4}+\max_{l\in [k-1]}\big(n^{-\gamma_{l}/2-l/(2m)+1/4}M^{l/(2m)}\big)\big)\\
    &\lesssim I_0+(\eta^{-k/(2m)}+\delta_{H_1}^{1/2})\big(M^{k/(2m)}n^{-k/(2m)+1/4}+\max_{l\in [k-1]}\big(n^{-\gamma_{l}/2-l/(2m)+1/4}M^{l/(2m)}\big)\big)\\
    &\quad +\frac{1}{\eta}\left(\frac{M}{n\eta}\right)^{1/(2m)}+\delta_{H_1}^{1/2}+(\delta_{H_2}^{1/2}+1)\cdot\frac{1}{\eta M^{1/2}}+\delta_{H_3}^{1/2}+\frac{1}{\eta^{3}n^{1/3}}\\
    &\lesssim \bigg(\frac{\alpha^{-1} (((\zeta\eta)^{-1}M)^{\frac{20}{8+p}}\vee 1)\cdot ((\eta^{-1}M)^{\frac{16+3p}{16+2p}}\vee 1)}{n^{\frac{1}{2}}}+ \frac{(((\zeta\eta)^{-1}M)^{\frac{40}{8+p}}\vee 1)}{n^{\frac{1}{2}}}\bigg)\\
    &\quad + (\eta^{-k/(2m)}+\delta_{H_1}^{1/2})\big(M^{k/(2m)}n^{-k/(2m)+1/4}+\max_{l\in [k-1]}\big(n^{-\gamma_{l}/2-l/(2m)+1/4}M^{l/(2m)}\big)\big)\\
    &\quad + \frac{1}{\eta}\left(\frac{M}{n\eta}\right)^{1/(2m)}+\delta_{H_1}^{1/2}+(\delta_{H_2}^{1/2}+1)\cdot\frac{1}{\eta M^{1/2}}+\delta_{H_3}^{1/2}+\frac{1}{\eta^{3}n^{1/3}}.
\end{align*}

\subsection{Proof of Corollary \ref{coro:covariate}}
Under the assumptions that $M^{-1}\log n=o(1)$, $n^{-1}M\log n=o(1)$ and that $\eta$ is bounded away from zero, arguing as in the proof of Lemma \ref{lemma:l2convdensityratio}, the terms involving $\delta_{H_1}$-$\delta_{H_3}$ are smaller order terms so that we can suppress them in Theorem \ref{thm:covariate}. This yields the desired simplified bound.

\subsection{Proof of Theorem \ref{thm:rank}}\label{proofofrankmain}
Now, we turn to the $\phi$-transformation rank based ATE estimator.
Following \citet[Proof of Theorem 5.1(ii), Section A.7]{cattaneo2023rosenbaum}, we can write
\begin{align*}
    \hat{\tau}_{\phi,M}^{bc}=E_{\phi,n}+(B_{\phi,n}-\hat{B}_{\phi,n}),
\end{align*} 
where
\begin{align*}
    E_{\phi,n}=\frac{1}{n}\sum_{i=1}^{n}(\mu_{\phi,1}(L_{\phi,1,i})-\mu_{\phi,0}(L_{\phi,0,i}))+\frac{1}{n}\sum_{i=1}^{n}(2D_i-1)\left(1+\frac{K_{\phi}(i)}{M}\right)\bm{\varepsilon}_{\phi,i},
\end{align*}
with $L_{\phi,\omega,i}:={\phi}_{\omega}(X_i)$ for $\omega \in \{0,1\}$ and $\bm{\varepsilon_{\phi,i}}=Y_i-\mu_{\phi,D_i}(L_{\phi,D_i,i})$ for $i\in[n]$. However, since $K_{\phi}(i)$ is defined through the ranks of $\hat{L}_{\phi,\omega,i}$'s (recall \eqref{def:kphii}), which depend on the whole data, it is not possible to express it as a sum of scores which are exponentially stabilizing, as given in \eqref{FoB}, unlike in the covariate based case. We thus consider the following modification. For $i\in[n]$, define $K_{\phi}^*(i)$ similarly as $K_{\phi}(i)$ in \eqref{def:kphii} with $\mcal{J}_{\phi,M}(i)$ replaced by the analogously defined $M$-NN matches in $L_{\phi,\omega,i}$ (instead of $\hat{L}_{\phi,\omega,i}$). We can now re-express $\hat{\tau}_{\phi,M}^{\text{bc}}$ as
\begin{align*}
    \hat{\tau}_{\phi,M}^{\text{bc}}=E_{\phi,n}^*+(B_{\phi,n}-\hat{B}_{\phi,n})+\Delta E_{\phi,n},
\end{align*}
where
\begin{align*}
    E_{\phi,n}^*=\frac{1}{n}\sum_{i=1}^{n}(\mu_{\phi,1}(L_{\phi,1,i})-\mu_{\phi,0}(L_{\phi,0,i}))+\frac{1}{n}\sum_{i=1}^{n}(2D_i-1)\bigg(1+\frac{K_{\phi}^*(i)}{M}\bigg)\bm{\varepsilon}_{\phi,i},
\end{align*}
and
\begin{align*}
    \Delta E_{\phi,n}=\frac{1}{n}\sum_{i=1}^{n}(2D_i-1)\bigg(\bigg(1+\frac{K_{\phi}(i)}{M}\bigg)-\bigg(1+\frac{K^*_{\phi}(i)}{M}\bigg)\bigg)\bm{\varepsilon}_{\phi,i}.
\end{align*}
As opposed to $E_{\phi,n}$, due to the fact that $\{L_{\phi,1,i}\}_{i=1}^{n}$ and $\{L_{\phi,0,i}\}_{i=1}^{n}$ are i.i.d.\ collection of random variables, the functional $E_{\phi,n}^*$ can indeed be written as a sum of exponentially stabilizing score functions similarly as in the covariate based case. Therefore, we can still follow a similar three-step procedure to prove Theorem \ref{thm:rank} as outlined in Section \ref{sec:gauss} and presented in the Proof of Theorem \ref{thm:covariate} for $E_{\phi,n}^*$. Thus following identical arguments as in \textbf{Step (1)} in Section \ref{pf:covariate}, we obtain that
\begin{align*}
    &{\sf d}_{K}\bigg(\frac{E_{\phi,n}^*-\mbb{E}E_{\phi,n}^*}{\sqrt{\Var E_{\phi,n}^*}},\mcal{N}(0,1)\bigg)\\
        &\lesssim \bigg(\frac{\alpha^{-1} (((\zeta\eta)^{-1}M)^{\frac{20}{8+p}}\vee 1)\cdot ((\eta^{-1}M)^{\frac{16+3p}{16+2p}}\vee 1)}{n^{\frac{1}{2}}}+ \frac{(((\zeta\eta)^{-1}M)^{\frac{40}{8+p}}\vee 1)}{n^{\frac{1}{2}}}\bigg).
\end{align*}
Also, we again have $\mbb{E}E_{\phi,n}^*=\tau$ so that
\begin{align*}
    {\sf d}_{K}\bigg(\sqrt{n}\frac{E_{\phi,n}^*-\mbb{E}E_{\phi,n}^*}{\sigma_{\phi}},\mcal{N}\bigg)={\sf d}_{K}\bigg(\sqrt{n}\frac{E_{\phi,n}^*-\tau}{\sigma_{\phi}},\mcal{N}\bigg),
\end{align*}
where $\sigma_{\phi}$ is given in \eqref{def:sigmaphi}. We thus need to only consider \textbf{Step (2)}, which is related to convergence of variance, and \textbf{Step (3)}, which involves bounding the bias term $(B_{\phi,n}-\hat{B}_{\phi,n})+\Delta E_{\phi,n}$. In order to complete these steps, it suffices to find the counterparts of Lemma \ref{lemma:l2convdensityratio} (Lemma \ref{Variancelowerbound} is a direct consequence of it) in \textbf{Step (2)} and the bound \eqref{biasboundcovariate} in \textbf{Step (3)}, which we present in the following two lemmas with their proofs included in Appendix \ref{sec:ranklemma}.

\begin{Lemma}[Counterpart of Lemma \ref{lemma:l2convdensityratio} and Lemma \ref{Variancelowerbound}]\label{rank_variancelowerbound}
Under the assumptions of Theorem \ref{thm:rank},
    \begin{align*}
    &\mbb{E}\bigg(\left(\frac{K_{\phi}(1)}{M}-\frac{1-e(X_1)}{e(X_1)}\right)^{2}\bigg|\{D_i\}_{i=1}^{n}\bigg)\mathds{1}(n_1>0)\\
    &\lesssim \frac{1}{\eta^2}\left(\frac{M}{n\eta}\right)^{\frac{1}{m'}}+\delta_{H_1}+(\delta_{H_2}+1)\cdot\frac{1}{\eta^2 M}+\delta_{H_3} +\left(\frac{M}{n}\right)^{\frac{4m}{m'}}\bigg(\frac{n^2}{M^2}\sup_{x_1,x_2\in\X}\|\hat{\phi}_{\omega}(\cdot;x_1,x_2)-\phi_{\omega}\|_{\infty}^{2m}\bigg),
    \end{align*}
    and
    \begin{align*}
    &\mbb{E}\bigg(\bigg(\frac{K_{\phi}^*(1)}{M}-\frac{1-e(X_1)}{e(X_1)}\bigg)^{2}\bigg|\{D_i\}_{i=1}^{n}\bigg)\mathds{1}(n_1>0)\lesssim \frac{1}{\eta^2}\left(\frac{M}{n\eta}\right)^{\frac{1}{m'}}+\delta_{H_1}+(\delta_{H_2}+1)\cdot\frac{1}{\eta^2 M}+\delta_{H_3},
\end{align*}
where $\hat{\phi}_{\omega}(\cdot;x_1,x_2)$ stands for the estimator constructed by inserting two new points $x_1,x_2\in \X$ into the point cloud with $D=1-\omega$. Moreover,
\begin{align*}
    |n\Var E_{\phi,n}^*-\sigma_{\phi}^2|\lesssim\frac{1}{\eta}\left(\frac{M}{n\eta}\right)^{\frac{1}{2m'}}+\delta_{H_1}^{1/2}+(\delta_{H_2}^{1/2}+1)\cdot\frac{1}{\eta M^{1/2}}+\delta_{H_3}^{1/2}+\frac{1}{\eta^{3}n^{1/3}}.
\end{align*}
\end{Lemma}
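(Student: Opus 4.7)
The plan is to mirror the proofs of Lemmas \ref{lemma:l2convdensityratio} and \ref{Variancelowerbound}, exploiting the fact that the \emph{true} transformed variables $L_{\phi,\omega,i}=\phi_\omega(X_i)$, $i \in [n]$, form an i.i.d.\ sample on $\mbb{X}_\phi \subset \mbb{R}^{m'}$ under Assumption C. I would begin with the bound on $K_\phi^*$, then bootstrap it to handle $K_\phi$ via a perturbation argument, and finally lift both to the variance estimate.

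\textbf{Step 1 (Bound for $K_\phi^*$).} Because $K_\phi^*(1)$ is constructed by performing nearest-neighbor matching on the true transforms $\phi_\omega(X_i)$, the problem is structurally identical to the density-ratio estimation in the covariate-based setting, with dimension $m'$ in place of $m$ and the conditional densities $g_{\phi,\omega,i}$ from Assumption C.5 in place of $g_i$. Assumptions C.1--C.7 provide exactly the required ingredients: two-sided bounded and Lipschitz densities, compact convex support with bounded surface area, and an interior cone condition. I would therefore rerun the argument of Lemma \ref{lemma:l2convdensityratio} verbatim to obtain the stated bound without the $\|\hat\phi_\omega-\phi_\omega\|_\infty$ term, also invoking the tail bound of Lemma \ref{lem:tb} (applied with $m'$) for the radius of stabilization.

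\textbf{Step 2 (Bound for $K_\phi$).} For the first assertion, decompose
\begin{align*}
\bigg(\frac{K_\phi(1)}{M} - \frac{1-e(X_1)}{e(X_1)}\bigg)^{2} \le 2\bigg(\frac{K_\phi(1) - K_\phi^*(1)}{M}\bigg)^{2} + 2\bigg(\frac{K_\phi^*(1)}{M} - \frac{1-e(X_1)}{e(X_1)}\bigg)^{2},
\end{align*}
and apply Step 1 to the last summand. For the first summand, observe that $|K_\phi(1)-K_\phi^*(1)|$ counts the indices $j$ with $D_j=1-D_1$ whose $M$-NN set changes when $\phi_\omega$ is replaced by $\hat\phi_\omega$. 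A swap can occur only when the $M$-th NN boundary shifts by at most $O(\|\hat\phi_\omega-\phi_\omega\|_\infty)$; under Assumption C.5 the expected number of points in such an annulus is controlled in terms of the $M$-th NN radius. Passing to the add-two-point estimator $\hat\phi_\omega(\cdot;x_1,x_2)$ to decouple $\hat\phi_\omega$ from $X_1$ and using the exponential tail bound on the $M$-th NN radius from Lemma \ref{lem:tb} (adapted to $m'$), the squared count is controlled after a Cauchy--Schwarz step by the stated term of order $(M/n)^{4m/m'} \cdot (n^2/M^2) \sup_{x_1,x_2}\|\hat\phi_\omega(\cdot;x_1,x_2)-\phi_\omega\|_\infty^{2m}$.

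\textbf{Step 3 (Variance bound).} Since $E_{\phi,n}^*$ is defined using $K_\phi^*$, for which the $L_{\phi,\omega,i}$'s are genuinely i.i.d., I can replicate the law-of-total-variance decomposition of Lemma \ref{Variancelowerbound} essentially verbatim. Writing $\Var E_{\phi,n}^* = \mbb{E}\,\Var(E_{\phi,n}^*\mid \XD{\cX}_n) + \Var\,\mbb{E}(E_{\phi,n}^*\mid \XD{\cX}_n)$, expanding into analogues of $J_1, J_2, J_3$, and identifying the leading term with $\sigma_\phi^2$ from \eqref{eq:sigmaphi}, the cross pieces $J_2, J_3$ are bounded by the square root of the $K_\phi^*$ mean-squared error from Step 1 via Cauchy--Schwarz (as in \eqref{J2}), and the same concentration of $n_0, n_1$ around $np_0, np_1$ (via Hoeffding as in \eqref{concn1}) yields the $\eta^{-3}n^{-1/3}$ remainder.

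\textbf{Main obstacle.} The hard part will be Step 2: making the geometric perturbation argument for $K_\phi(1) - K_\phi^*(1)$ fully non-asymptotic and matching the exponents $(M/n)^{4m/m'}$ and $\|\cdot\|_\infty^{2m}$ stated in the lemma. This requires careful bookkeeping of how a sup-norm error on $\hat\phi_\omega$ translates into swaps in the $M$-NN structure, combined with the add-two-point construction $\hat\phi_\omega(\cdot;x_1,x_2)$ to ensure independence from $X_1$ before raising to the $2m$-th power and integrating.
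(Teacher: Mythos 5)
Your proposal is sound and would recover all three bounds, but it organizes the argument for the first assertion differently from the paper. The paper does not isolate $K_{\phi}(1)-K_{\phi}^{*}(1)$ at this stage: following \citet[Proof of Lemma A.3]{cattaneo2023rosenbaum}, it analyzes the density-ratio estimator $\hat{r}_{\phi}(x)=\tfrac{n_0}{Mn_1}\sum_j \mathds{1}(Z_j\in A_{\phi}(x))$ built from the \emph{estimated} catchment areas directly, splitting $\mbb{P}(Z_1\in A_{\phi}(x))$ according to whether $4\|\hat{\phi}-\phi\|_{\infty}\le \epsilon'\|\phi(x)-\phi(Z_1)\|$ or not; the complementary event produces the $(\delta\epsilon')^{-m}\,\mbb{E}\sup_z\|\hat{\phi}_{Z_1\rightarrow z}-\phi\|_{\infty}^{m}$ term in the bias (and its square in the variance), which after setting $\epsilon,\epsilon'\asymp\delta\asymp (M/(n\eta))^{1/m'}$ yields the stated $\hat{\phi}$-error term. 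The second assertion is then obtained by deleting those terms, i.e., the paper proves the $K_{\phi}$ bound first and specializes to $K_{\phi}^{*}$, whereas you go in the opposite order. Your route is arguably cleaner: the $K_{\phi}^{*}$ bound is a verbatim dimension-$m'$ rerun of Lemma \ref{lemma:l2convdensityratio} (as you say), and the perturbation quantity $\mbb{E}\big((K_{\phi}(1)-K_{\phi}^{*}(1))^2/M^2\,|\,\{D_i\}\big)$ that you need in Step 2 is exactly the quantity $H_1$ the paper must bound anyway in the proof of Lemma \ref{rank_biasbound} for $\Delta E_{\phi,n}$, so your organization unifies the two lemmas. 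Your Step 3 coincides with the paper's treatment (the $J_2',J_3'$ decomposition, Cauchy--Schwarz against the $K_{\phi}^{*}$ mean-squared error, and the binomial concentration of $n_0,n_1$ giving the $\eta^{-3}n^{-1/3}$ remainder).

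The one place where your sketch is thinner than what is ultimately required is the swap-counting argument in Step 2, which you correctly flag. Writing $K_{\phi}(1)-K_{\phi}^{*}(1)$ as a sum over $j$ of differences of membership indicators, squaring produces a double sum whose cross terms must be controlled despite $\hat{\phi}_{\omega}$ depending on the entire sample; this is precisely why the add-two-point estimator $\hat{\phi}_{\omega}(\cdot;x_1,x_2)$ appears, and why the paper routes the whole computation through the event split on $\|\hat{\phi}-\phi\|_{\infty}$ relative to $\epsilon'\|\phi(x)-\phi(Z_1)\|$ rather than through a direct annulus count. If you carry out your Step 2 you will essentially rederive that event split, so there is no gap in principle, only unwritten bookkeeping to match the exponents $(\delta\epsilon')^{-2m}$ and $\|\cdot\|_{\infty}^{2m}$.
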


\begin{Lemma}[Counterpart of the bias bound \eqref{biasboundcovariate}]\label{rank_biasbound}
    Under the assumptions of Theorem \ref{thm:rank},
    \begin{align*}
    \mbb{E}|B_{\phi,n}-\hat{B}_{\phi,n}|&\lesssim (\eta^{-k/m'}+\delta_{H_1})\bigg(\left(\frac{M}{n}\right)^{k/m'}+n^{-k/2}+\max_{l\in [k-1]}\bigg(n^{-\gamma_{\phi,l}}\bigg(\left(\frac{M}{n}\right)^{l/m'}+n^{-l/2}\bigg)\bigg)\nonumber\\
    &\qquad \qquad+\lim_{\delta\rightarrow 0}\mbb{E}\sup_{x,y\in\mbb{X},\|\phi(x)-\phi(y)\|\le \delta}\|(\hat{\phi}-\phi)(x)-(\hat{\phi}-\phi)(x)\|_{\infty}\bigg),
\end{align*}
where $k:=\lfloor m'/2 \rfloor\vee 1 +1$ and $\gamma_{\phi,l}$'s are given in Assumption D.4 in Section \ref{AssumpD}. Additionally,
\begin{align*}
    \sqrt{n}\mbb{E}|\Delta E_{\phi,n}|
    &\lesssim \left(\frac{M}{n}\right)^{2m/m'}\cdot \bigg(\frac{n^2}{M^2}\sup_{x_1,x_2\in \X}\|\hat{\phi}_{\omega}(\cdot;x_1,x_2)-\phi_{\omega}\|_{\infty}^{2m}\bigg)^{1/2}.
\end{align*}
\end{Lemma}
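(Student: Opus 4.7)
The plan is to establish the two bounds separately, adapting the covariate-based bias analysis (culminating in \eqref{biasboundcovariate}) to the $\phi$-transformed setting, while carefully tracking the additional error introduced by replacing $\phi_{\omega}$ with the estimator $\hat{\phi}_{\omega}$.

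For the first bound, I would decompose $B_{\phi,n}-\hat{B}_{\phi,n}$ as a sum over $i\in[n]$ and over $M$-NN matches $j\in \mcal{J}_{\phi,M}(i)$ of terms of the form $(\mu_{\phi,1-D_i}(L_{\phi,1-D_i,i}) - \mu_{\phi,1-D_i}(L_{\phi,1-D_i,j})) - (\hat{\mu}_{\phi,1-D_i}(\hat{L}_{\phi,1-D_i,i}) - \hat{\mu}_{\phi,1-D_i}(\hat{L}_{\phi,1-D_i,j}))$, and split each into (i) the part evaluated at the true $L$'s and (ii) the perturbation from plugging in $\hat L$. For (i), I apply a Taylor expansion of order $k=\lfloor m'/2\rfloor\vee 1 +1$ in conjunction with Assumption D.4, producing contributions of order $(M/n)^{k/m'}$ (Taylor remainder) and $n^{-\gamma_{\phi,l}}(M/n)^{l/m'}$ for $l\in[k-1]$ (from matching derivative errors against $M$-NN distance moments in $\phi$-space). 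For (ii), a modulus-of-continuity argument applied to $\hat{\phi}_{\omega}-\phi_{\omega}$ on points with $\|\phi_\omega(x)-\phi_\omega(y)\|$ of order $(M/n)^{1/m'}$ produces, in the limit, the $\lim_{\delta\to 0}\mbb{E}\sup$ term; the accompanying empirical-process fluctuations of $\hat{\phi}_\omega - \phi_\omega$ at fixed scales give the additional $n^{-l/2}$ pieces. Conditioning on $\{D_i\}_{i=1}^n$ replaces sample-size randomness by moments of $(n/n_\omega)^{k/m'}\mathds{1}(n_\omega>0)$, which, via Hoeffding's inequality (cf. \eqref{adderrorinbias}), yields the $\eta^{-k/m'}+\delta_{H_1}$ prefactor.

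For the second bound, write
\begin{align*}
\Delta E_{\phi,n} = \frac{1}{nM}\sum_{i=1}^{n}(2D_i-1)(K_\phi(i)-K^*_\phi(i))\bm{\varepsilon}_{\phi,i}.
\end{align*}
By the dual counting identity $\sum_{i} |K_\phi(i)-K^*_\phi(i)| \le 2\sum_{j}|\mcal{J}_{\phi,M}(j)\triangle \mcal{J}^{*}_{\phi,M}(j)|$, it suffices to control, for each $j$, the number of indices that move in or out of the $M$-NN set when switching from $\hat{L}_{\phi,1-D_j,\cdot}$ to $L_{\phi,1-D_j,\cdot}$. Such a flip only occurs for points whose $\phi$-distance to $L_{\phi,1-D_j,j}$ lies within $2\sup_\omega\|\hat{\phi}_\omega-\phi_\omega\|_\infty$ of the $M$-th NN distance, whose number, thanks to Assumption C.2, is of order $(n/M)^{1-1/m'}\cdot n \cdot \|\hat\phi-\phi\|_\infty$ in expectation. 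Combining this bound with the $(4+p)$-moment assumption on $\bm{\varepsilon}_{\phi,i}$ via Cauchy--Schwarz and raising $\|\hat{\phi}-\phi\|_\infty$ to the $2m$-th power (to match the rate appearing in Assumption D-style bounds and the leave-two-out estimator $\hat{\phi}_\omega(\cdot;x_1,x_2)$) produces the claimed $(M/n)^{2m/m'}$ scaling, after the $1/\sqrt{n}$ from the $\sqrt{n}$ factor outside is absorbed.

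The hard part will be step (ii) of the first bound together with the $\Delta E_{\phi,n}$ analysis, both of which require decoupling the sample dependence of $\hat{\phi}_\omega$ on the very observation whose match is being analyzed. This is done through leave-two-out conditioning via $\hat{\phi}_\omega(\cdot;x_1,x_2)$, and the delicate point is to match the resulting rates to the precise form stated, i.e. to show that the $\phi$-estimation perturbation contributes exactly $(M/n)^{2m/m'}$ and the modulus-of-continuity term in $B_{\phi,n}-\hat{B}_{\phi,n}$ with no extra polynomial loss. The remaining ingredients are Taylor expansion and NN-distance moment bounds essentially parallel to the covariate case proved earlier.
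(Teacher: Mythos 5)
Your outline for the first bound is consistent with what the paper actually does: the paper delegates the Taylor-expansion/NN-distance analysis to \citet[Proof of Lemma A.2]{cattaneo2023rosenbaum} and only supplies the prefactor $\eta^{-k/m'}+\delta_{H_1}$ via the moment bound \eqref{adderrorinbias}, and your reconstruction (order-$k$ Taylor expansion against Assumption D.4, modulus-of-continuity for the $\hat\phi$ perturbation, Hoeffding for $(n/n_\omega)^{k/m'}$) matches the structure of the stated terms.

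Your treatment of $\sqrt{n}\,\mbb{E}|\Delta E_{\phi,n}|$, however, has a genuine gap. First, the counting route forfeits the cancellation coming from the conditionally centered errors. The paper bounds $\sqrt{n}\,\mbb{E}|\Delta E_{\phi,n}|\le\sqrt{n}\,(\mbb{E}\Delta E_{\phi,n}^2)^{1/2}$ and uses that, conditionally on the covariates, the summands $(2D_i-1)(K_\phi(i)-K^*_\phi(i))\bm{\varepsilon}_{\phi,i}$ are independent with mean zero, so all cross terms vanish and the bound reduces to $\big(\mbb{E}[((K_\phi(1)-K^*_\phi(1))/M)^2\sigma^2_{\phi,D_1}(X_1)]\big)^{1/2}$ — a \emph{single-index second moment}. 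Your identity $\sum_i|K_\phi(i)-K^*_\phi(i)|\le 2\sum_j|\mcal{J}_{\phi,M}(j)\triangle\mcal{J}^*_{\phi,M}(j)|$ controls an $L^1$ aggregate; if you then apply the triangle inequality to $\sum_i(\cdots)\bm{\varepsilon}_{\phi,i}$ you lose the orthogonality and your bound is off by a factor of $\sqrt{n}$, while if you instead apply Cauchy--Schwarz to the whole sum (as you suggest) you are back to needing $\mbb{E}[(K_\phi(1)-K^*_\phi(1))^2]$, which the $L^1$ flip count does not control — the second moment requires the \emph{pair} correlation of flips, not the expected number of flips.

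Second, and relatedly, your expected flip count is linear in $\|\hat\phi-\phi\|_\infty$, whereas the stated bound involves $\big(\mbb{E}\sup_{x_1,x_2}\|\hat\phi_\omega(\cdot;x_1,x_2)-\phi_\omega\|_\infty^{2m}\big)^{1/2}$. The $2m$-th moment of the leave-two-out estimator is not obtained by "raising to the $2m$-th power" (Jensen runs the wrong way for that); in the paper it arises intrinsically from expanding $\mbb{E}[(K_\phi(1)-K^*_\phi(1))^2]$ as a double sum of catchment-area indicator differences, whose off-diagonal (pair-flip) probability is bounded, after leave-two-out decoupling, by $(\delta\epsilon')^{-2m}\,\mbb{E}\sup\|\hat\phi(\cdot;x_1,x_2)-\phi\|_\infty^{2m}$ via the $\Phi_2$-type estimate of \citet{cattaneo2023rosenbaum}; the exponent $m$ there comes from the measure of the preimage in $\X\subseteq\mbb{R}^m$ of a thin $\phi$-ball, not from an annulus volume in $\phi$-space. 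To repair your argument you would need to replace the flip count by this pair-flip probability computation, at which point you have essentially reproduced the paper's proof. (Separately, your annulus count appears to have the ratio inverted, $(n/M)^{1-1/m'}$ versus $(M/n)^{1-1/m'}$, but this is moot given the structural issue above.)
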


Putting all the bounds obtained so far together, we have indeed derived everything needed in the three steps prescribed in Section \ref{sec:gauss}. Following similar arguments as in Section \ref{pf:covariate}, we then obtain
\begin{align*}
    &{\sf d}_{K}\left(\sqrt{n}\frac{\hat{\tau}_{\phi,M}^{bc}-\tau}{\sigma_{\phi}},\mcal{N}(0,1)\right)\\
    &\lesssim \frac{\alpha^{-1} (((\zeta\eta)^{-1}M)^{\frac{20}{8+p}}\vee 1)\cdot ((\eta^{-1}M)^{\frac{16+3p}{16+2p}}\vee 1)}{n^{\frac{1}{2}}}+ \frac{(((\zeta\eta)^{-1}M)^{\frac{40}{8+p}}\vee 1)}{n^{\frac{1}{2}}}\\
    &\quad+\frac{1}{\eta}\left(\frac{M}{n\eta}\right)^{1/(2m')}+\delta_{H_1}^{1/2}+(\delta_{H_2}^{1/2}+1)\cdot\frac{1}{\eta M^{1/2}}+\delta_{H_3}^{1/2}+\frac{1}{\eta^{3}n^{1/3}}\\
    &\quad+\left(\frac{n}{M}\right)^{m/m'}\cdot \bigg(\frac{n^2}{M^2}\mbb{E}\Big(\sup_{\omega\in\{0,1\}}\sup_{x_1,x_2\in \X}\|\hat{\phi}_{\omega}(\cdot;x_1,x_2)-\phi_{\omega}\|_{\infty}^{2m}\Big)\bigg)^{1/4}\\
    &\quad +(\eta^{-k/(2m')}+\delta_{H_1}^{1/2})\Bigg(M^{k/(2m')}n^{-k/(2m')+1/4}+\max_{l\in [k-1]}\bigg(n^{-\gamma_{\phi,l}/2+1/4}\bigg(\bigg(\frac{M}{n}\bigg)^{l/(2m')}+n^{-l/4}\bigg)\bigg)\\
    &\quad +n^{-k/4+1/4}+n
    ^{1/4}(\sup_{\omega\in\{0,1\}}\lim_{\delta\rightarrow 0}\mbb{E}\sup_{x,y\in\mbb{X},\|\phi_{\omega}(x)-\phi_{\omega}(y)\|\le \delta}\|(\hat{\phi}_{\omega}-\phi_{\omega})(x)-(\hat{\phi}_{\omega}-\phi_{\omega})(y)\|_{\infty})^{1/2}\Bigg)
\end{align*}
yielding the assertion in Theorem \ref{thm:rank}.

\subsection{Proof of Corollary \ref{coro:cdf}}\label{proofofcdf}
Recall the definition of the $\phi$-transformation rank-based ATE estimator \eqref{phirankATEest}. Setting $\phi_0=\phi_1=\mbf{F}$ and $\hat{\phi}_0=\hat{\phi}_1=\hat{\mbf{F}}_n$ as the (multivariate) distribution function and the (multivariate) empirical distribution function respectively, we obtain the CDF-rank-based ATE estimator \eqref{rankATEest}. 

According to Theorem \ref{thm:rank}, it suffices to bound the rate of convergence of $\hat{\phi}$ in $B_5$ and $B_6$. Firstly, considering $B_5$, noting that by defining $\mbf{T}_i=\mathds{1}(s\mbf{1}_m\le X_i\le t\mbf{1}_m)-\mathds{1}(s\mbf{1}_m\le X\le t\mbf{1}_m)$ coordinate-wise and using the results in \citet[Proof of Theorem 3.1, Part II]{cattaneo2023rosenbaum} for the inequality, we obtain
\begin{align*}
    &\sup_{\omega\in\{0,1\}}\lim_{\delta\rightarrow 0}\mbb{E}\sup_{x,y\in\mbb{X},\|\mbf{F}(x)-\mbf{F}(y)\|\le \delta}\|(\hat{\mbf{F}}_n-\mbf{F})(x)-(\hat{\mbf{F}}_n-\mbf{F})(y)\|_{\infty}\\
    &=\sup_{\omega\in\{0,1\}}\lim_{\delta\rightarrow 0}\int_{0}^{\epsilon}\mbb{P}\bigg(\sup_{x,y\in\mbb{X},\|\mbf{F}(x)-\mbf{F}(y)\|\le \delta}\|(\hat{\mbf{F}}_n-\mbf{F})(x)-(\hat{\mbf{F}}_n-\mbf{F})(y)\|_{\infty}>\epsilon\bigg)d\epsilon\\
    &\lesssim \lim_{\delta\rightarrow 0}\int_{0}^{\infty}e^{-\frac{\epsilon^2 n^2}{C\delta n+\epsilon n}}d\epsilon\lesssim n^{-1}.
\end{align*}
Plugging the above bound in $B_5$ yields the term $B_5'$. 

Finally, to bound $B_6$, we directly apply \citet[Proof of Theorem 3.1, Part I]{cattaneo2023rosenbaum} to obtain
\begin{align*}
    \mbb{E}\sup_{\omega\in\{0,1\}}\sup_{x_1,x_2\in \X}\|\hat{\mbf{F}}_{n}(\cdot;x_1,x_2)-\mbf{F}\|_{\infty}^{2m}\lesssim n^{-2m}.
\end{align*}
For $m\ge 3$, plugging the above result in $B_6$ yields $B_6'$. As for $m=1,2$, simply plugging the above bound in $B_6$ does not yield a decaying bound. This is because setting $\epsilon\asymp\epsilon'\asymp\delta\asymp(M/n)^{1/m'}$ in \eqref{L2x} and \eqref{withdeltabiasbound} fails to yield a tight bound for this CDF-specific case. We instead keep and re-pick $\epsilon,\epsilon',\delta$ in \eqref{L2x} and \eqref{withdeltabiasbound} with the assumption that $\eta$ is bounded away from $0$ in the following. From \eqref{L2x}, \eqref{generaleps} and \eqref{withdeltabiasbound}, we have
\begin{align*}
    B_6'&=|n\Var E_n-\sigma_{\phi}^2|+(\sqrt{n}\mbb{E}|\Delta E_{\phi,n}|)^{1/2} \\
    &\lesssim \left(\frac{M}{n}\right)^{1/(2m)}+\epsilon+\epsilon'+\frac{1}{n^{1/3}}+M^{-1/2} +(\delta\epsilon')^{-m/2}\cdot \left(\frac{n^2}{M^2}n^{-2m}\right)^{1/4}.
\end{align*}
Here, recall that
\begin{align*}
\delta\gtrsim \left(\frac{M}{np_0}\right)^{1/m}\ge \left(\frac{M}{n\eta}\right)^{1/m}.
\end{align*}
Moreover, by definition, we have $\epsilon,\epsilon'\asymp \delta$. For the particular choice of $\phi$ and $\hat{\phi}$ as CDF and eCDF respectively, we can take the infimum over all $\delta\gtrsim \left(\frac{M}{n\eta}\right)^{1/m}$ and obtain the bound
\begin{align*}
    B_6'\lesssim \left(\frac{M}{n}\right)^{1/(2m)}+\frac{1}{n^{1/3}}+M^{-1/2}+\inf_{\delta\gtrsim \left(\frac{M}{n}\right)^{1/m}}\left(\delta+\delta^{-m}\cdot M^{-1/2}n^{(1-m)/2}\right).
\end{align*}
The minimizer is the solution to the equation
\begin{align*}
    1-m\delta^{-m-1}M^{-1/2}n^{(1-m)/2}=0,
\end{align*}
i.e.,
\begin{align*}
    \delta^{*}\asymp  \left(\frac{1}{Mn^{m-1}}\right)^{\frac{1}{2m+2}},
\end{align*}
if it is in the domain. For $m=1$, since $B_4'$ is bounded by our assumption, we have
\begin{align*}
    \delta^{*}\asymp \frac{1}{M^{1/4}}\gtrsim \frac{M}{n}.
\end{align*}
Thus the infimum is attained at $\delta^{*}$ so that
\begin{align*}
    B_6'\lesssim \left(\frac{M}{n}\right)^{1/2}+\frac{1}{n^{1/3}}+M^{-1/2}+\frac{1}{M^{1/4}}.
\end{align*}
For $m=2$, again since $B_4'$ is bounded, we have
\begin{align*}
    \delta^{*}\asymp \frac{1}{M^{1/6}n^{1/6}}\gtrsim \left(\frac{M}{n}\right)^{1/2},
\end{align*}
so that the infimum is attained at $\delta^{*}$ yielding
\begin{align*}
    B_6'\lesssim \left(\frac{M}{n}\right)^{1/4}+\frac{1}{n^{1/3}}+M^{-1/2}+\left(\frac{1}{Mn}\right)^{1/6}.
\end{align*}
For $m\ge 3$, we can directly set $\delta\asymp \left(\frac{M}{n}\right)^{1/m}$. Combining all the cases above, we obtain the desired bound.

\section{Proof of Theorem \ref{thm:boot}}
We start with the case for bootstrapping the covariate based ATE estimator.
Recall the bootstrap estimator given in Section \ref{sec:boostrapresults} given by 
\begin{align*}
    \hat\tau_{M}^{\text{boot}}= \widebar{\Delta\hat{\mu}} + \frac{1}{n}\sum_{i=1}^{n}(\Delta\hat{\mu}(X_i)-\widebar{\Delta\hat{\mu}})V_i+\frac{1}{n}\sum_{i=1}^{n}(2D_i-1)\bigg(1+\frac{K^{D_i}_{M}(i,\XD{\cX_n})}{M}\bigg)\hat{R}_iW_{i},
\end{align*}
where the residual $\hat{R}_{i}$ satisfies
\begin{align*}
    Y_i=\mu_{D_i}(X_i)+\bm{\varepsilon_i}=\hat{\mu}_{D_i}(X_i)+\hat{R}_{i},\ i\in [n].
\end{align*}
Recall also that the bias-corrected estimator can be written as
$$\hat{\tau}_{M}^{\text{bc}}=\widebar{\Delta\hat{\mu}}+\frac{1}{n}\sum_{i=1}^{n}(2D_i-1)\bigg(1+\frac{K^{D_i}_{M}(i,\XD{\cX}_n)}{M}\bigg)\hat R_i.$$
Hence, we obtain
\begin{align*}
    \hat\tau_{M}^{\text{boot}}-\hat{\tau}_{M}^{\text{bc}}=\frac{1}{n}\sum_{i=1}^{n}(\Delta\hat{\mu}(X_i)-\widebar{\Delta\hat{\mu}})V_i+\frac{1}{n}\sum_{i=1}^{n}(2D_i-1)\bigg(1+\frac{K^{D_i}_{M}(i,\XD{\cX_n})}{M}\bigg)\hat{R}_i(W_{i}-1).
\end{align*}
Now observe that conditional on $\XDE{\cX}_n$, $\hat{\tau}^{\text{boot}}_{M}-\hat{\tau}_{M}^{bc}$ has zero mean and moreover, it is an average of independent normal random variables as $\{V_i\}_{i=1}^{n}$ and $\{W_{i}\}_{i=1}^{n}$ are i.i.d.\ $\mcal{N}(0,1)$ and $\mcal{N}(1,1)$ random variables, respectively, independent of each other. Thus it holds that
\begin{align}\label{K1K2K3}
    &\Var(\sqrt{n}(\hat{\tau}^{\text{boot}}_{M}-\hat{\tau}_{M}^{bc})|\XDE{\cX}_n)\nonumber\\
    &=\frac{1}{n}\sum_{i=1}^{n}(\Delta\hat{\mu}(X_i)-\widebar{\Delta\hat{\mu}})^2+\frac{1}{n}\sum_{i=1,D_i=1}^{n}\bigg(1+\frac{K^{1}_{M}(i,\XD{\cX}_n)}{M}\bigg)^2\hat{R}_{i}^2\nonumber\\
    &\quad+\frac{1}{n}\sum_{i=1,D_i=0}^{n}\bigg(1+\frac{K^{0}_{M}(i,\XD{\cX}_n)}{M}\bigg)^2\hat{R}_{i}^2\nonumber\\
    &=\frac{1}{n}\sum_{i=1}^{n}(\Delta\hat{\mu}(X_i)-\widebar{\Delta\hat{\mu}})^2+\frac{1}{n}\sum_{i=1,D_i=1}^{n}\bigg(1+\frac{K^{1}_{M}(i,\XD{\cX}_n)}{M}\bigg)^2(\bm{\varepsilon}_i+\mu_{1}(X_i)-\hat{\mu}_1(X_i))^2\nonumber\\
    &\quad +\frac{1}{n}\sum_{i=1,D_i=0}^{n}\bigg(1+\frac{K^{0}_{M}(i,\XD{\cX}_n)}{M}\bigg)^2(\bm{\varepsilon}_i+\mu_0(X_i)-\hat{\mu}_0(X_i))^2\nonumber\\[8pt]
    &=:K_1+K_2+K_3.
\end{align}
We compare this variance with 
\begin{align*}
    \sigma^2=\Var (\mu_1(X)-\mu_0(X))+\mbb{E}\left(\frac{\sigma_1^2(X)}{e(X)}+\frac{\sigma^2_0(X)}{1-e(X)}\right)
\end{align*}
from  \eqref{def:sigma}. First, for $K_1$ note that
\begin{align*}
    &|K_1-\Var (\mu_1(X)-\mu_0(X))|\\
    &=\bigg|\frac{1}{n}\sum_{i=1}^{n}(\hat{\mu}_0(X_i)-\hat{\mu}_1(X_i))^2-(\widebar{\Delta\hat{\mu}})^2-\mbb{E}(\mu_1(X)-\mu_0(X))^2+(\mbb{E}(\mu_1(X)-\mu_0(X)))^2\bigg|\\
    &\le\bigg|\frac{1}{n}\sum_{i=1}^{n}(\hat{\mu}_0(X_i)-\hat{\mu}_1(X_i))^2-\mbb{E}(\mu_1(X)-\mu_0(X))^2\bigg|+|(\widebar{\Delta\hat{\mu}})^2-(\mbb{E}(\mu_1(X)-\mu_0(X)))^2|\\
    &=:K_{11}+K_{12}.
\end{align*}
For $K_{11}$, for $i\in[n]$, writing $a_i=\hat{\mu}_0(X_i)-\mu_0(X_i)$, $b_i=\mu_0(X_i)-\mu_1(X_i)$ and $c_i=\mu_1(X_i)-\hat{\mu}_1(X_i)$ for simplicity, we have
\begin{align*}
    K_{11}&=\bigg|\frac{1}{n}\sum_{i=1}^{n}(\hat{\mu}_0(X_i)-\mu_0(X_i)+\mu_0(X_i)-\mu_1(X_i)+\mu_1(X_i)-\hat{\mu}_1(X_i))^2-\mbb{E}(\mu_1(X)-\mu_0(X))^2\bigg|\\
    &\le \bigg|\frac{1}{n}\sum_{i=1}^{n}(\mu_{1}(X_i)-\mu_{0}(X_i))^2-\mbb{E}(\mu_1(X)-\mu_0(X))^2\bigg|\\
    &\qquad+\bigg|\frac{1}{n}\sum_{i=1}^{n}a_i^2\bigg|+\bigg|\frac{1}{n}\sum_{i=1}^{n}c_i^2\bigg|+2\bigg|\frac{1}{n}\sum_{i=1}^{n}(a_ib_i+a_ic_i+b_ic_i)\bigg|.
\end{align*}
Further note that 
\begin{align*}
    \frac{1}{n}\left|\sum_{i=1}^{n}a_ib_i\right|&\le \|\hat{\mu}_0-\mu_0\|_{\infty}\cdot\frac{1}{n}\sum_{i=1}^{n}|\mu_0(X_i)-\mu_1(X_i)|,\\
    \frac{1}{n}\left|\sum_{i=1}^{n}b_ic_i\right|&\le \|\hat{\mu}_1-\mu_1\|_{\infty}\cdot\frac{1}{n}\sum_{i=1}^{n}|\mu_0(X_i)-\mu_1(X_i)|,\\
    \frac{1}{n}\left|\sum_{i=1}^{n}a_ic_i\right|&\le \|\hat{\mu}_0-\mu_0\|_{\infty}\|\hat{\mu}_1-\mu_1\|_{\infty}.
\end{align*}
We thus have that
\begin{align*}
    K_{11}&\le \left|\frac{1}{n}\sum_{i=1}^{n}(\mu_{1}(X_i)-\mu_{0}(X_i))^2-\mbb{E}(\mu_1(X)-\mu_0(X))^2\right|\\
    &\qquad+\left|\frac{1}{n}\sum_{i=1}^{n}(\hat{\mu}_{0}(X_i)-\mu_{0}(X_i))^2\right|+\left|\frac{1}{n}\sum_{i=1}^{n}(\hat{\mu}_{1}(X_i)-\mu_{1}(X_i))^2\right|\\
    &\qquad+4 \max_{\omega=0,1}\|\hat{\mu}_{\omega}-\mu_{\omega}\|_{\infty}\cdot\frac{1}{n}\sum_{i=1}^{n}|\mu_0(X_i)-\mu_1(X_i)|+2\max_{\omega=0,1}\|\hat{\mu}_{\omega}-\mu_{\omega}\|_{\infty}^2.
\end{align*}
Since both $\mu_1$ and $\mu_0$ are uniformly bounded by our assumptions, we apply the Hoeffding bound by setting $t=\|\mu_1 - \mu_0\|_\infty^2 \sqrt{n\log n}$ in \citet[Proposition 2.5]{wainwright2019high}) to obtain that with probability at least $1-2n^{-1}$, we have
\begin{align*}
    \bigg|\frac{1}{n}\sum_{i=1}^{n}(\mu_{1}(X_i)-\mu_{0}(X_i))^2-\mbb{E}(\mu_1(X)-\mu_0(X))^2\bigg|\le\|\mu_1 - \mu_0\|_\infty^2\sqrt{n^{-1}\log n},
\end{align*}
whence with probability at least $1-2n^{-1}$,
\begin{align*}
    K_{11}\le \|\mu_1 - \mu_0\|_\infty^2\sqrt{n^{-1}\log n}+4 \|\mu_1 - \mu_0\|_\infty\max_{\omega=0,1}~\|\hat{\mu}_{\omega}-\mu_{\omega}\|_{\infty}+4 \max_{\omega=0,1}~\|\hat{\mu}_{\omega}-\mu_{\omega}\|_{\infty}^2.
\end{align*}
On the other hand, again recalling the definitions of $a_i,b_i,c_i$ above, we have
\begin{align*}
    K_{12}&=\bigg|\bigg(\frac{1}{n}\sum_{i=1}^{n}(\hat{\mu}_0(X_i)-\hat{\mu}_1(X_i))\bigg)^2-(\mbb{E}(\mu_0(X)-\mu_1(X)))^2\bigg|\\
    &=\bigg|\bigg(\frac{1}{n}\sum_{i=1}^{n}(\hat{\mu}_0(X_i)-\mu_0(X_i)+\mu_0(X_i)-\mu_1(X_i)+\mu_1(X_i)-\hat{\mu}_1(X_i))\bigg)^2-(\mbb{E}(\mu_0(X)-\mu_1(X)))^2\bigg|\\
    &\le \bigg|\bigg(\frac{1}{n}\sum_{i=1}^{n}(\mu_0(X_i)-\mu_1(X_i))\bigg)^2-(\mbb{E}(\mu_0(X)-\mu_1(X)))^2\bigg|+\bigg(\frac{1}{n}\sum_{i=1}^{n}a_i\bigg)^2+\bigg(\frac{1}{n}\sum_{i=1}^{n}c_i\bigg)^2\\
    &\quad+2\bigg|\frac{1}{n}\sum_{i=1}^{n}a_i\frac{1}{n}\sum_{i=1}^{n}b_i\bigg|+2\bigg|\frac{1}{n}\sum_{i=1}^{n}b_i\frac{1}{n}\sum_{i=1}^{n}c_i\bigg|+2\bigg|\frac{1}{n}\sum_{i=1}^{n}a_i\frac{1}{n}\sum_{i=1}^{n}c_i\bigg|.
\end{align*}
Arguing again similarly as above, using the Hoeffding bound along with uniform boundedness of $\mu_0$ and $\mu_1$, it follows that with probability at least $1-2n^{-1}$,
\begin{align*}
    \bigg|\left(\frac{1}{n}\sum_{i=1}^{n}(\mu_0(X_i)-\mu_1(X_i))\right)^2-(\mbb{E}(\mu_0(X)-\mu_1(X)))^2\bigg| \le\sqrt{2}\|\mu_0-\mu_1\|_{\infty}^2 \sqrt{n^{-1}\log n}.
\end{align*}
Also, it holds almost surely that
\begin{align*}
    &\bigg(\frac{1}{n}\sum_{i=1}^{n}a_i\bigg)^2+\bigg(\frac{1}{n}\sum_{i=1}^{n}c_i\bigg)^2+2\bigg|\frac{1}{n}\sum_{i=1}^{n}a_i\frac{1}{n}\sum_{i=1}^{n}b_i\bigg|+2\bigg|\frac{1}{n}\sum_{i=1}^{n}b_i\frac{1}{n}\sum_{i=1}^{n}c_i\bigg|+2\bigg|\frac{1}{n}\sum_{i=1}^{n}a_i\frac{1}{n}\sum_{i=1}^{n}c_i\bigg|\\
    &\le 4\|\mu_0-\mu_1\|_{\infty}\max_{\omega=0,1}~\|\hat{\mu}_{\omega}-\mu_{\omega}\|_{\infty}+4 \max_{\omega=0,1}\|\hat{\mu}_{\omega}-\mu_{\omega}\|_{\infty}^2.
\end{align*}
Together, we obtain that with probability at least $1-2n^{-1}$,
\begin{align*}
    K_{12}\le \sqrt{2}\|\mu_0-\mu_1\|_{\infty}^2 \sqrt{n^{-1}\log n}+4\|\mu_0-\mu_1\|_{\infty}\max_{\omega=0,1}~\|\hat{\mu}_{\omega}-\mu_{\omega}\|_{\infty}+4\max_{\omega=0,1}\|\hat{\mu}_{\omega}-\mu_{\omega}\|_{\infty}^2,
\end{align*}
which in turn yields, putting the bounds on $K_{11}$ and $K_{12}$ together and noting the fact that $\|\mu_0-\mu_1\|_{\infty}$ is bounded by our assumption, that with probability at least $1-4n^{-1}$,
\begin{align*}
    &|K_1-\Var (\mu_1(X)-\mu_0(X))|\nonumber\\
    &\le 
    3 \|\mu_0-\mu_1\|_{\infty}^2 \sqrt{n^{-1}\log n}+8\|\mu_1 - \mu_0\|_\infty\max_{\omega=0,1}~\|\hat{\mu}_{\omega}-\mu_{\omega}\|_{\infty}+8\max_{\omega=0,1}~\|\hat{\mu}_{\omega}-\mu_{\omega}\|_{\infty}^2\nonumber\\
    &\lesssim\sqrt{n^{-1}\log n}+\max_{\omega=0,1}~(\|\hat{\mu}_{\omega}-\mu_{\omega}\|_{\infty}+\|\hat{\mu}_{\omega}-\mu_{\omega}\|_{\infty}^2).
\end{align*}
For $K_2$, we have
\begin{align*}
    &\left|K_2-\mbb{E}\left(\frac{\sigma_1^2(X)}{e(X)}\right)\right|\\
    &\le \bigg|\frac{1}{n}\sum_{i=1,D_i=1}^{n}\bigg(1+\frac{K^{1}_{M}(i,\XD{\cX}_n)}{M}\bigg)^2\bm{\varepsilon}_i^2-\mbb{E}\left(\frac{\sigma_1^2(X)}{e(X)}\right)\bigg|+\frac{1}{n}\sum_{i=1,D_i=1}^{n}\bigg(1+\frac{K^{1}_{M}(i,\XD{\cX}_n)}{M}\bigg)^2\|\mu_1-\hat{\mu}_1\|_{\infty}^2\\
    &\quad+\frac{2}{n}\sum_{i=1,D_i=1}^{n}\bigg(1+\frac{K^{1}_{M}(i,\XD{\cX}_n)}{M}\bigg)^2|\bm{\varepsilon}_i|\|\mu_1-\hat{\mu}_1\|_{\infty}\\
    &=:K_{21}+K_{22}+K_{23}.
\end{align*}
We can further bound $K_{21}$ as
\begin{align*}
    K_{21}&\le \bigg|\frac{1}{n}\sum_{i=1,D_i=1}^{n}\bigg(\bigg(1+\frac{K^{1}_{M}(i,\XD{\cX}_n)}{M}\bigg)^2-\left(\frac{1}{e(X_i)}\right)^2\bigg)\bm{\varepsilon}_i^2\bigg|\\
    &\quad+\bigg|\frac{1}{n}\sum_{i=1,D_i=1}^{n}\left(\frac{1}{e(X_i)}\right)^2\bm{\varepsilon}_i^2-\mbb{E}\left(\frac{\sigma_1^2(X)}{e(X)}\right)\bigg|
    =:K_{211}+K_{212}.
\end{align*}
According to the bound for $\mbb{E}|J_2|$ in the proof of Lemma \ref{Variancelowerbound}, we have
\begin{align*}
    \mbb{E}K_{211}\lesssim B_3^2.
\end{align*}
Then, it implies with probability at least $1-(B_3 \wedge 1)$, we have
\begin{align*}
    K_{211}\lesssim B_3.
\end{align*}
Since $\mbb{E}|\bm{\varepsilon}|^{4+p}$ is bounded by our assumption, by Markov's inequality, it holds that for any $\epsilon>0$,
\begin{align*}
    \mbb{P}(K_{212}\ge\epsilon)\le \frac{\mbb{E}K_{212}^2}{\epsilon^2}\le \frac{\Var(e(X_1)^{-2}\bm{\varepsilon}_1^2\mathds{1}(D_1=1))}{n\epsilon^2}\lesssim \frac{1}{n\eta^4\epsilon^2}.
\end{align*}
We thus have that with probability at least $1-n^{-1/2}$,
\begin{align*}
    K_{212}\lesssim \eta^{-2}n^{-1/4}.
\end{align*}
Combining the bounds for $K_{211}$ and $K_{212}$, it holds that with probability at least $1-(B_3+n^{-1/2}) \wedge 1\ge 1-(2B_3)\wedge 1$,
\begin{align*}
    K_{21}\lesssim B_3+\eta^{-2}n^{-1/4}.
\end{align*}
Next, for $K_{22}$, note that
\begin{align*}
    K_{22}
    &\le \bigg|\frac{1}{n}\sum_{i=1,D_i=1}^{n}\bigg(\bigg(1+\frac{K^{1}_{M}(i,\XD{\cX}_n)}{M}\bigg)^2-\bigg(\frac{1}{e(X_i)}\bigg)^{2}\bigg)\bigg| \|\mu_1-\hat{\mu}_1\|_{\infty}^2\\
    &
    \qquad+\frac{1}{n}\sum_{i=1,D_i=1}^{n}\left(\frac{1}{e(X_i)}\right)^2\|\mu_1-\hat{\mu}_1\|_{\infty}^2.
\end{align*}
Similar to $K_{21}$, we have that with probability at least $1-(B_3 \wedge 1)$,
\begin{align*}
    K_{22}\lesssim (B_3+\eta^{-2})\|\mu_1-\hat{\mu}_1\|_{\infty}^2.
\end{align*}
As for $K_{23}$, arguing similar to $K_{21}$, we obtain that with probability at least $1-(B_3+n^{-1/2}) \wedge 1\ge 1-(2B_3)\wedge 1$,
\begin{align*}
    K_{23}\lesssim (B_3+\eta^{-1}+\eta^{-2}n^{-1/4})\|\mu_1-\hat{\mu}_1\|_{\infty}.
\end{align*}
Putting all the above bounds together yields that with probability at least $1- (5B_3) \wedge 1$,
\begin{align*}
    \left|K_2-\mbb{E}\left(\frac{\sigma_1^2(X)}{e(X)}\right)\right|&\lesssim (\max_{\omega=0,1}\|\mu_{\omega}-\hat{\mu}_{\omega}\|_{\infty}^2+1)B_3+\eta^{-2}(n^{-1/4}\vee\max_{\omega=0,1}\|\mu_{\omega}-\hat{\mu}_{\omega}\|^2)\\
&\qquad+\eta^{-1}\max_{\omega=0,1}\|\mu_{\omega}-\hat{\mu}_{\omega}\|.
\end{align*}
By symmetry, the same bound also holds for $K_3$ with high probability. We thus have that with probability at least $1-(10B_3+ 4n^{-1/2}) \wedge 1\ge 1-(14B_3)\wedge 1$,
\begin{align}\label{bootvariancebound}
    |\Var(\sqrt{n}(\hat{\tau}^{\text{boot}}_{M}-\hat{\tau}_{M}^{bc})|\XDE{\cX}_n)-\sigma^2|
    &\lesssim \eta^{-1}\max_{\omega=0,1}\|\hat{\mu}_{\omega}-\mu_{\omega}\|_{\infty}+(\max_{\omega=0,1}\|\mu_{\omega}-\hat{\mu}_{\omega}\|_{\infty}^2+1)B_3\nonumber\\
    &\quad+\eta^{-2}(n^{-1/4}+\max_{\omega=0,1}\|\mu_{\omega}-\hat{\mu}_{\omega}\|_{\infty}^2).
\end{align}
Finally, we can write
\begin{align}\label{eq:varbootsum}
    &\mathsf{d}_{K}(\sqrt{n}(\hat{\tau}^{\text{boot}}_{M}-\hat{\tau}_{M}^{bc})|\XDE{\cX}_n,\sqrt{n}(\hat{\tau}_{M}^{bc}-\tau))\nonumber\\
    &=\sup_{t\in\mbb{R}}~|\mbb{P}(\sqrt{n}(\hat{\tau}^{\text{boot}}_{M}-\hat{\tau}_{M}^{bc})\le t|\XDE{\cX}_n)-\mbb{P}(\sqrt{n}(\hat{\tau}_{M}^{bc}-\tau)\le t)|\nonumber\\
    &\le \mathsf{d}_{K}(\mcal{N}(0,\Var(\sqrt{n}(\hat{\tau}^{\text{boot}}_{M}-\hat{\tau}_{M}^{bc})|\XDE{\cX}_n)),\mcal{N}(0,\sigma^2))+\mathsf{d}_{K}(\mcal{N}(0,\sigma^2),\sqrt{n}(\hat{\tau}_{M}^{bc}-\tau)).
\end{align}
Now, in order to apply \eqref{eq:dkbd1} and \eqref{boundratioofvariance}, we need to obtain a lower bound to the conditional variance above. From the the variance decomposition in \eqref{K1K2K3}, we can almost surely lower bound
\begin{multline*}
    \Var(\sqrt{n}(\hat{\tau}^{\text{boot}}_{M}-\hat{\tau}_{M}^{bc})|\XDE{\cX}_n)
    \ge K_2+K_3\ge \frac{1}{n}\sum_{i=1}^{n}\hat{R}_i^{2}\\
    =\frac{1}{n}\sum_{i=1}^{n}(\bm{\varepsilon}_i+\mu_{D_i}(X_i)-\hat{\mu}_{D_i}(X_i))^2
    \ge\frac{1}{n}\sum_{i=1}^{n}\bm{\varepsilon}_i^2-\frac{2}{n}\bigg|\sum_{i=1}^{n}\bm{\varepsilon_i}(\mu_{D_i}(X_i)-\hat{\mu}_{D_i}(X_i))\bigg|.
\end{multline*}
By our Assumption B.2 in Section \ref{AssumpB}, assuming without loss of generality that $M_{u,p} \ge 1$, we have $\mbb{E}|\bm{\varepsilon}_i|^{4}\le M_{u,p}$ for all $i\in [n]$. By Markov's inequality, we thus have that for any $\epsilon>0$,
\begin{align*}
    \mbb{P}\bigg(\bigg|\frac{1}{n}\sum_{i=1}^{n}(\bm{\varepsilon}_i^2-\mbb{E}\bm{\varepsilon}_i^2)\bigg|\ge \epsilon\bigg)\le \frac{M_{u,p}}{n\epsilon^2}.
\end{align*}
Therefore, by setting $\epsilon$ such that $\frac{M_{u,p}}{n\epsilon^2}=n^{-1/3}$, it yields that with probability at least $1-n^{-1/3}\ge 1-(B_3\wedge 1)$,
\begin{align*}
    \frac{1}{n}\sum_{i=1}^{n}\bm{\varepsilon}_i^2\ge M_{l}-\sqrt{M_{u,p}}\ n^{-1/3}.
\end{align*}
Similarly, one has that with probability at least $1-n^{-1/3}\ge 1-(B_3 \wedge 1)$,
\begin{align*}
    \frac{1}{n}\sum_{i=1}^{n}|\bm{\varepsilon}_i|\le M_{u,p}+(2M_{u,p})^{1/4}n^{-5/12}.
\end{align*}
Together, it yields that with probability at least $1-(2B_3)\wedge 1$,
\begin{align*}
    \Var(\sqrt{n}(\hat{\tau}^{\text{boot}}_{M}-\hat{\tau}_{M}^{bc})|\XDE{\cX}_n)\ge L(\hat{\mu},\mu,n),
\end{align*}
where
\begin{align*}
    L(\mu,\hat{\mu},n)&:=\big(M_{l}-\sqrt{M_{u,p}}\ n^{-1/3}-2\max_{\omega=0,1}\|\hat{\mu}_{\omega}-\mu_{\omega}\|_{\infty}\big(M_{u,p}+(2M_{u,p})^{1/4}n^{-5/12}\big)\big)\vee 0.
\end{align*}

Now using \eqref{eq:varbootsum}, Theorem \ref{thm:covariate} as well as \eqref{eq:dkbd1} and \eqref{boundratioofvariance} along with \eqref{bootvariancebound} and the variance lower bound above, we obtain that with probability at least $1-(16B_3)\wedge 1$,
\begin{align*}
    &\mathsf{d}_{K}(\sqrt{n}(\hat{\tau}^{\text{boot}}_{M}-\hat{\tau}_{M}^{bc})|\XDE{\cX}_n,\sqrt{n}(\hat{\tau}_{M}^{bc}-\tau))\\
    &\lesssim B_1+B_2+\frac{(1+\max_{\omega=0,1}\|\mu_{\omega}-\hat{\mu}_{\omega}\|_{\infty}^2) B_3}{L(\mu,\hat{\mu},n)}+\frac{\eta^{-1}\max_{\omega=0,1}\|\hat{\mu}_{\omega}-\mu_{\omega}\|_{\infty}+\eta^{-2}n^{-1/4}}{L(\mu,\hat{\mu},n) },
\end{align*}
yielding the first assertion, where we have used the fact that the Kolmogorov distance is always smaller than or equal to $1$ to simplify the bound.

The proof for the rank-based case follows mutatis mutandis the above proof and is, therefore, omitted.

\section{Proof of Theorem \ref{rateswithconstant}}
As mentioned in Remark \ref{rmk:relationwithgeneral}, the proof closely follows \citet[Proof of Theorem 4.2]{lachieze2019normal} while additionally keeping track of the quantity $c(M,\eta,p)$ (recall Remark~\ref{rmk:relationwithgeneral}). We first utilize the following bound modified from \citet[Lemma 5.5]{bhattacharjee2022gaussian}; see also \citet[Lemma 5.6]{lachieze2019normal}.
\begin{Lemma}\label{MBofFn}
    For $E_n$ defined at \eqref{eq:E_n}, there exists a constant $C>0$ such that for $p\in (0,1]$ and $\zeta=p/(40+10p)$,
    \begin{align*}
        \mbb{E}|\mathsf{D}_{\xde{x}}(nE_n)(\XDE{\mcal{X}}_{n-1-|\XDE{\mcal{A}}|}\cup \XDE{\mcal{A}})|^{4+p/2}
        \le C M_n(\xde{x},\XDE{\mcal{A}})^{4+p/2} (1+(\zeta\eta)^{-5}M^5)
    \end{align*}
    for all $\xde{x} \in\XDE{\mbb{X}}$ with $\XDE{\mcal{A}}\subset\XDE{\X}$, $|\XDE{\mcal{A}}|\le 1$ and $n\ge 9$, where we let
    $$
    M_{n}(\xde{x},\XDE{\mcal{A}}):=1+|\varepsilon|+\sum_{(x_k, d_k, \varepsilon_k)\in\mcal{A}}|\varepsilon_k|.
    $$
\end{Lemma}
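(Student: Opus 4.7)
The plan is to estimate the add-one cost $\mathsf{D}_{\xde{x}}(nE_n)$ pointwise and then raise it to the $(4+p/2)$-th power. Writing $nE_n=\sum_{j=1}^{n}\xi(\XDE{X}_j,\XDE{\cX}_n)$ with $\xi$ as in \eqref{xin}, I would decompose
\[
\mathsf{D}_{\xde{x}}(nE_n)(\mu)=\xi\bigl(\xde{x},\mu\cup\{\xde{x}\}\bigr)+\sum_{\xde{y}\in\mu}\bigl[\xi(\xde{y},\mu\cup\{\xde{x}\})-\xi(\xde{y},\mu)\bigr].
\]
Reading off the three terms in \eqref{xin}, the first summand is dominated in absolute value by $C+|\varepsilon|+M^{-1}\sum_{i\in\mathcal{J}_M^{1-d}(x,\mu)}|\varepsilon_i|$ (the constant comes from the uniform boundedness of $\mu_0,\mu_1$ under Assumption~B.3).

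For a typical $\xde{y}$ in the sum, only the $M$-NN-dependent piece of $\xi$ is affected by $\xde{x}$. The difference is nonzero only when $\xde{x}$ has the opposite label to $\xde{y}$ and $\xde{x}$ enters the $M$-NN set of $\xde{y}$ in the opposite-label subsample, in which case the difference equals $\frac{1-2d_y}{M}\bigl(\varepsilon-\varepsilon_{\xde{x}^\star(\xde{y})}\bigr)$, where $\xde{x}^\star(\xde{y})$ is the displaced neighbor. Every such $\xde{y}$ lies inside the ball $B\bigl(\xde{x},R(\xde{x},\mu\cup\{\xde{x}\})\bigr)$, where $R$ is the radius of stabilization from \eqref{eq:RoS}. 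Consequently, one obtains the envelope
\[
|\mathsf{D}_{\xde{x}}(nE_n)(\mu)| \le C\bigl(1+|\varepsilon|\bigr) + \frac{C}{M}\sum_{\xde{z}\in \mu\cap B(\xde{x},C R(\xde{x},\mu\cup\{\xde{x}\}))} \bigl(|\varepsilon_{\xde{z}}|+|\varepsilon|\bigr),
\]
where the second sum absorbs both the $M$ neighbours of $\xde{x}$ and any marks that may be displaced from a neighbour set of an affected $\xde{y}$.

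The main obstacle is taking the $(4+p/2)$-th moment while tracking the constants carefully in $\eta$ and $M$. The idea is to condition on the covariate--label configuration $\XD{\cX}_{n-1}$, so that the index set in the sum becomes measurable and the $\varepsilon_{\xde{z}}$'s are conditionally independent, each with $(4+p)$-th conditional moment bounded by $M_{u,p}$ by Assumption~B.2. Applying Minkowski's and Jensen's inequalities then yields
\[
\mbb{E}\Bigl[\bigl|\mathsf{D}_{\xde{x}}(nE_n)\bigr|^{4+p/2}\mid \XD{\cX}_{n-1}\Bigr] \le C\,M_n(\xde{x},\XDE{\mathcal{A}})^{4+p/2}\Bigl(1+M^{-(4+p/2)}\mathbb{E}\bigl[N(\xde{x},\mu)^{4+p/2}\mid\XD{\cX}_{n-1}\bigr]\Bigr),
\]
where $N(\xde{x},\mu)$ is the point count in the relevant ball. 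The tail bound of Remark~\ref{rem:tail} gives an exponential decay for $R(\xde{x},\mu)$ at rate $\asymp \eta n/M$, so $N(\xde{x},\mu)$ is sub-exponential with mean of order $M/\eta$. A Hölder split with exponent chosen as $\zeta=p/(40+10p)$ to separate the $(4+p/2)$-moment of $\varepsilon$ from its full $(4+p)$-th conditional moment, together with an integration of the tail of $R$, produces a factor of order $(\zeta\eta)^{-5}M^5$, which matches \eqref{def:cMetap}. The most delicate bookkeeping lies in making sure that the ``up to seven extra points'' allowance in \eqref{tailcondition} and the set $\XDE{\mathcal{A}}$ with $|\XDE{\mathcal{A}}|\le 1$ are absorbed into the constants without spoiling this estimate; I would handle these by passing to $\XDE{\mathcal{X}}_{n-8}$ before applying the tail bound, exactly as in the proof of \cite[Lemma~5.5]{bhattacharjee2022gaussian}.
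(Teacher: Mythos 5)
Your decomposition of $\mathsf{D}_{\xde{x}}(nE_n)$ into the new score at $\xde{x}$ plus the changes $\mathsf{D}_{\xde{x}}\xi_n(\xde{y},\cdot)$ is the same starting point as the paper's, and the observation that the first summand has $(4+p)$-th moment bounded by $M_n(\xde{x},\XDE{\mcal{A}})$ is fine. The proof breaks at the envelope step. A point $\xde{y}$ is affected by the insertion of $\xde{x}$ precisely when $\xde{x}$ enters the $M$-NN set of $\xde{y}$ among the points of label $1-d_y$, i.e.\ when $d(x,y)\le R_n(\xde{y},\mu\cup\{\xde{y}\})$ --- the radius of stabilization \emph{at $\xde{y}$}, which measures the $M$-NN distance of $y$ within the label class $d_x$. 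Your claim that every such $\xde{y}$ lies in $B\bigl(\xde{x},C R_n(\xde{x},\mu\cup\{\xde{x}\})\bigr)$ compares this to the radius at $\xde{x}$, which measures the $M$-NN distance of $x$ within the \emph{other} label class $d_y$; the two are not comparable (take $y$ far from $x$ in a region sparse in $d_x$-labelled points: $x$ can still be among its $M$ nearest opposite-label neighbours). This is exactly why the paper, following \citet[Lemma 5.6]{lachieze2019normal}, does not localize to a deterministic ball around $\xde{x}$ but instead integrates the tail probability $\mbb{P}\bigl(d(x,y)\le R_n(\xde{y},\cdot)\bigr)$ over $\xde{y}$ using Lemma \ref{lem:tb} and Lemma \ref{generalintegral}.

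Separately, even if the envelope were valid, your own intermediate display does not produce the stated bound: with $N(\xde{x},\mu)$ concentrated around $M/\eta$ and the prefactor $M^{-(4+p/2)}$, you would obtain a factor of order $\eta^{-(4+p/2)}$ with no $M$-dependence at all, and no fifth power appears anywhere in your computation. In the paper the exponent $5$ has a specific combinatorial origin: the $(4+p/2)$-th moment of the sum $\sum_{\xde{y}}\mathsf{D}_{\xde{x}}\xi_n(\xde{y},\cdot)$ is expanded into the terms $I_1,\dots,I_5$ indexed by tuples of up to five distinct points, and after H\"older each coordinate of the tuple contributes one factor $g_n(\xde{x})=n\int e^{-\zeta C_2 n d(x,y)^m}\mbb{Q}(dy)\lesssim(\zeta\eta)^{-1}M$, giving $(1+g_n(\xde{x})^5)\lesssim 1+(\zeta\eta)^{-5}M^5$. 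Your argument contains no mechanism that generates this exponent, so the assertion that a ``H\"older split with exponent $\zeta$'' yields $(\zeta\eta)^{-5}M^5$ is a statement of the target rather than a derivation. To repair the proof you would need to either carry out the tuple expansion as in the paper, or give a genuinely different moment computation that correctly handles the random, non-ball-shaped set of affected points.
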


\begin{proof}[Proof of Lemma~\ref{MBofFn}]
We argue as in \cite{lachieze2019normal}. Let $\XDE{\cX}_{n,\XDE{A}}=\XDE{\mcal{X}}_{n-1-|\XDE{\mcal{A}}|}\cup \XDE{\mcal{A}}$. Recalling the score function $\xi_n$ in \eqref{xin} and using the definition of the add-one cost operator followed by an application of Jensen's inequality, we have
\begin{align}\label{eq:momsum}
	\mbb{E}\big|\mathsf{D}_{\xde{x}}(nE_n)(\XDE{\cX}_{n,\XDE{\mcal{A}}})\big|^{4+p/2}
	&=\mbb{E}\bigg|\xi_n(\xde{x},\XDE{\cX}_{n,\XDE{\mcal{A}}}\cup\{\xde{x}\})
	+\sum_{\xde{y}\in \XDE{\cX}_{n,\XDE{\mcal{A}}}} \mathsf{D}_{\xde{x}} \xi_{n}(\xde{y}, \XDE{\cX}_{n,\XDE{\mcal{A}}})\bigg|^{4+p/2} \nonumber\\
	&\leq 3^{3+p/2} \mbb{E}\bigg|\xi_{n}(\xde{x}, \XDE{\cX}_{n,\XDE{\mcal{A}}}\cup\{\xde{x}\})\bigg|^{4+p/2}+3^{3+p/2}\mbb{E}\bigg|\sum_{\xde{y} \in \XDE{\cX}_{n-1-|\XDE{\mcal{A}}|}} \mathsf{D}_{\xde{x}} \xi_{n}(\xde{y},\XDE{\cX}_{n,\XDE{\mcal{A}}})\bigg|^{4+p/2}\nonumber\\
    &\qquad +3^{3+p/2}\sum_{\xde{y} \in \XDE{\mcal{A}}} \mbb{E}|\mathsf{D}_{\xde{x}} \xi_{n}(\xde{y},\XDE{\cX}_{n,\XDE{\mcal{A}}})|^{4+p/2}.
\end{align}
Let us first verify the moment condition \eqref{momentcondition} for $\xi_n$. Note from \eqref{xin} that for $i\in[n]$,
\begin{align*}
    \xi_n(\xde{X_j}, \XDE{\cX_n})&:=(\mu_1(X_j)-\mu_0(X_j))+(2D_j-1)\bm{\varepsilon}_{j} +\frac{1}{M}(1-2D_j)\hspace{-.2cm}\sum_{i=1,D_i=1-D_j}^{n}\bm{\varepsilon}_i \mathds{1}(i\in \mcal{J}^{1-D_j}_{M}(j,\XDE{\cX_n})).
\end{align*}
According to Assumption A.1 in Section \ref{AssumpA} and Assumption B.3 in Section \ref{AssumpB}, the functions $\mu_0$ and $\mu_1$ being continuous on compact supports,  are uniformly bounded. 
Thus, Assumption B.2 in Section \ref{AssumpB} yields that for any $p\in (0,1]$, $\XDE{\mcal{A}}\subset \XDE{\X}$ with $|\XDE{\mcal{A}}|\le 7$ and $\xde{x} \in \XDE{X}$,
\begin{align}\label{boundxinwithA}
    \big(\mbb{E}|\xi_n(\xde{x}, \XDE{\cX}_{n,\XDE{\mcal{A}}}\cup\{\xde{x}\})|^{4+p}\big)^{\frac{1}{4+p}} \lesssim 1+|\varepsilon|+\sum_{(x_k, d_k, \varepsilon_k)\in\XDE{\mcal{A}}}|\varepsilon_k|=:M_{n}(\xde{x},\XDE{\mcal{A}}).
\end{align}
Here, we observe that the bound $M_{n}(\xde{x},\XDE{\mcal{A}})$ for $\xde{x}=(x,d,\varepsilon)$ in the moment condition depends only on $\varepsilon$ and $\varepsilon_k$'s associated with the point $\xde{X}$ and the points in the set $\XDE{\mcal{A}}$, respectively, and it is non-decreasing in its second argument $\XDE{\mcal{A}}$. This yields that the first summand on the right-hand side of \eqref{eq:momsum} is bounded by
$C \cdot M_n(\xde{x},\XDE{\mcal{A}})^{4+p/2}$ for some $C>0$. Arguing analogously as in
\citet[Lemma~5.6]{lachieze2019normal}, the second summand can be bounded as
\begin{align*}
    3^{3+p/2}\mbb{E}\bigg|\sum_{\xde{y} \in \XDE{\cX}_{n-1-|\XDE{\mcal{A}}|}} \mathsf{D}_{\xde{x}} \xi_{n}(\xde{y},\XDE{\cX}_{n,\XDE{\mcal{A}}})\bigg|^{4+p/2}\le 3^{3+p/2}(I_1 + 15I_2 + 25I_3 + 10I_4 + I_5),
\end{align*}
where for $i \in \{1,\dots,5\}$, we let

\begin{align*}
	I_{i}=\mbb{E} \sum_{(\xde{y}_{1},\dots,\xde{y}_{i} )\in \XDE{\cX}_{n-1-|\XDE{\mcal{A}}|}^{i,\neq}}
	\mathds{1}_{\textsf{D}_{\xde{x}}\xi_{n}(\xde{y}_{j}, \XDE{\cX}_{n,\XDE{\mcal{A}}})\neq 0, j \in [i]}
	\big|\mathsf{D}_{\xde{x}} \xi_{n}(\xde{y}_{1}, \XDE{\cX}_{n,\XDE{\mcal{A}}})\big|^{4+p/2}.
\end{align*}
Here $\XDE{\cX}_{n-1-|\XDE{\mcal{A}}|}^{i, \neq}$ stands for the set of all $i$-tuples of distinct
points from $\XDE{\cX}_{n-1-|\XDE{\mcal{A}}|}$, where multiple points at the same
location are considered to be different ones. Then an application of H\"{o}lder's inequality yields
\begin{align*}
	I_i &\le n^{i} \int_{\XDE{\mbb{X}}^{i}}\mbb{E}\Big[\mathds{1}_{\textsf{D}_{\xde{x}}
		\xi_{n}(\xde{y}_{j}, \XDE{\cX}_{n-i,\XDE{\mcal{A}}}\cup\{\xde{y}_1,\ldots,\xde{y}_i\})\neq 0, j\in [i]} 
	\big|\mathsf{D}_{\xde{x}} \xi_n (\xde{y}_1, \XDE{\cX}_{n-i,\XDE{\mcal{A}}}\cup\{\xde{y}_1,\ldots,\xde{y}_i\})\big|^{4+p/2} \Big] \XDE{\mbb{Q}}^{i}(d(\xde{y}_{1},\dots,\xde{y}_{i}))\\
	&\lesssim n^{i} \int_{\XDE{\mbb{X}}^{i}} M_n(\xde{y}_1,\XDE{\mcal{A}}\cup\{\xde{x}, \xde{y}_2,\ldots,\xde{y}_i\})^{4+p/2} 
	\prod_{j=1}^{i} \mbb{P}\left(\mathsf{D}_{\xde{x}} \xi_n (\xde{y}_j, \XDE{\cX}_{n-i,\XDE{\mcal{A}}}\cup\{\xde{y}_1,\ldots,\xde{y}_i\})\neq 0\right)^{\frac{p-p/2}{4 i+p i}}\\
    &\qquad\qquad \quad \XDE{\mbb{Q}}^{i}(d(\xde{y}_{1},\dots,\xde{y}_{i}))\\
	& \lesssim n^{i} \int_{\XDE{\mbb{X}}^{i}} M_n(\xde{y}_1,\XDE{\mcal{A}}\cup\{\xde{x}, \xde{y}_2,\ldots,\xde{y}_i\})^{4+p/2} 
	\\
    & \qquad \qquad \times \prod_{j=1}^{i} \mbb{P}\left( d(x, {y}_j) \le R_n(\xde{y}_j,\XDE{\cX}_{n-i,\XDE{\mcal{A}}}\cup\{\xde{y}_j\})  \right)^{\frac{p-p/2}{4 i+p i}}
	\XDE{\mbb{Q}}^{i}(d(\xde{y}_{1},\dots,\xde{y}_{i})),
\end{align*}
where in the last step, we have used that the radius of stabilization in \eqref{eq:RoS} is non-increasing in its second argument and satisfies (see \citet[Lemma 5.3]{lachieze2019normal})
$$
\mathsf{D}_{\xde{x}} \xi_n (\xde{y}, \mu \cup\{\xde{y}\})\neq 0 \implies d_S(\xde{x},\xde{y}) = d(x,y) \le R_n(\xde{y},\mu \cup\{\xde{y}\}).
$$
Now by \eqref{boundxinwithA}, we have
\begin{align*}
    M_n(\xde{y}_1,\XDE{\mcal{A}}\cup\{\xde{x}, \xde{y}_2,\ldots,\xde{y}_i\})\lesssim 1+ |\varepsilon|+\sum_{j=1}^{i}|\varepsilon_j|+\sum_{(x_k,d_k,\varepsilon_k)\in\mcal{A}}|\varepsilon_k|.
\end{align*}
Note from Lemma \ref{lem:tb} that $\mbb{P}\left(d({x}, {y}_j) \le R_n(\xde{y}_j,\XDE{\cX}_{n-i,\XDE{\mcal{A}}}\cup\{\xde{y}_j\})\right)$ can be upper bounded by a quantity that does not involve $(d_j,\varepsilon_j)_{j=1}^i$. In addition, we can integrate over $(d_j,\varepsilon_j)_{j=1}^i$ and due to our Assumption B.2 in Section \ref{AssumpB} obtain that
\begin{align*}
    I_i&\lesssim n^{i} \int_{\mbb{X}^{i}} \bigg(1+|\varepsilon|+\sum_{j=1}^{i}|\varepsilon_j|+\sum_{(x_k,d_k,\varepsilon_k)\in\mcal{A}}|\varepsilon_k|\bigg)^{4+p/2} 
	\prod_{j=1}^{i} \exp\Big\{-C_2\frac{p/2}{4 i+p i}
	nd({y}_j,{x})^m\Big\}\\
    &\qquad\qquad\mbb{Q}^{i}(d(y_{1},\dots,y_{i}))\\
    & \lesssim n^{i} \int_{\mbb{X}^{i}} M_n(\xde{x},\XDE{\mcal{A}})^{4+p/2} 
	\prod_{j=1}^{i} \exp\Big\{-C_2\frac{p/2}{4 i+p i}
	nd({y}_j,{x})^m\Big\}\mbb{Q}^{i}(d(y_{1},\dots,y_{i}))\\
    &\le M_n(\xde{x},\XDE{\mcal{A}})^{4+p/2} (1+g_n(\xde{x})^5),
\end{align*}
where we have defined
\begin{align*}
    g_n(\xde{x}):=n\int_{\mbb{X}} 
	\exp\Big\{-\zeta C_2
	nd({y},{x})^m\Big\} \mbb{Q}(dy).
\end{align*}
Moreover, by Lemma \ref{generalintegral}, we have $g_{n}(\xde{x}) \lesssim (\zeta C_2)^{-1} \lesssim (\zeta\eta)^{-1}M$.
Combining with the definition of $M_n(\xde{x},\XDE{\mcal{A}})$, the above bound yields that for $i=1,\ldots,5$,
\begin{align*}
    I_i\lesssim M_n(\xde{x},\XDE{\mcal{A}})^{4+p/2} (1+(\zeta\eta)^{-5}M^5).
\end{align*}
This implies the second summand in \eqref{eq:momsum} is bounded by 
\begin{align*}
    3^{3+p/2}\mbb{E}\bigg|\sum_{\xde{y} \in \XDE{\cX}_{n-1-|\XDE{\mcal{A}}|}} \mathsf{D}_{\xde{x}} \xi_{n}(\xde{y},\XDE{\cX}_{n,\XDE{\mcal{A}}})\bigg|^{4+p/2}\lesssim M_n(\xde{x},\XDE{\mcal{A}})^{4+p/2} (1+(\zeta\eta)^{-5}M^5).
\end{align*}
Lastly for the third summand in \eqref{eq:momsum}, by Jensen inequality and H\"{o}lder's inequality, we have for $\xde{y} \in \XDE{\mcal{A}}$ that
\begin{align*}
    &\mbb{E}|\mathsf{D}_{\xde{x}} \xi_{n}(\xde{y},\XDE{\cX}_{n,\XDE{\mcal{A}}})|^{4+p/2}\\
    &\lesssim \mbb{E}(|\xi_{n}(\xde{y},\XDE{\cX}_{n,\XDE{\mcal{A}}}\cup\{\xde{x}\})|^{4+p/2}+|\xi_{n}(\xde{y},\XDE{\cX}_{n,\XDE{\mcal{A}}})|^{4+p/2})
    \le M_{n}(\xde{x} ,\XDE{\mcal{A}})^{4+p/2}.
\end{align*}
Combining all these three bounds yields via \eqref{eq:momsum} that
\begin{align*}
    &\mbb{E}|\mathsf{D}_{\xde{x}}(nE_n)(\XDE{\mcal{X}}_{n-1-|\XDE{\mcal{A}}|}\cup \XDE{\mcal{A}})|^{4+p/2}\\
    &\lesssim M_n(\xde{x},\XDE{\mcal{A}})^{4+p/2} (1+(\zeta\eta)^{-5}M^5)\lesssim \bigg(\sum_{(x_k,d_k,\varepsilon_k)\in\XDE{\mcal{A}}}|\varepsilon_k|^{4+p/2} + |\varepsilon|^{4+p/2}+1\bigg)(1+(\zeta\eta)^{-5}M^5).
\end{align*}
\end{proof}

 \citet[Theorem 5.1]{lachieze2017new} along with \citet[Remark 5.2 and Proposition 5.3]{lachieze2017new} (see also \citet[Theorem 4.3]{lachieze2019normal}) provides the following theorem, which serves as a key ingredient in deriving Gaussian approximation bounds for i.i.d.\ input (binomial point process) and can be viewed as a counterpart of the well-known second order Poincar\'{e} inequality for functionals of Poisson point processes. Indeed, we will use this result to prove Theorem \ref{rateswithconstant}. To state it, we first need to introduce some notation. 
 
 For an $n$-dimensional random vector $U$ with i.i.d.\ coordinates, let $U',U''$ be independent copies of $U$. We say a random vector $V$ is a recombination of $\{U,U',U''\}$ if for $i\in[n]$, $V_i\overset{a.s.}{=}U_i\ \text{or}\ U_{i}'\ \text{or}\ U''_{i}$. Also, for a vector $u=(u_1,\ldots,u_n)$ and distinct indices $I=(i_1,\ldots,i_n)\subset[n]$, denote by $u^{I}$ the subvector with the coordinates corresponding to $I$ removed. For a symmetric function $f$ defined on a point cloud $\{u_1,\ldots,u_n\}$, we extend the notation $f(u_1,\ldots,u_n):=f(\{u_1,\ldots,u_n\})$. We write $\mcal{U}_{n}:=\{U_1,\ldots,U_n\}$ and for $i,j\in [n]$, we define the index derivatives
\begin{align*}
    \mathsf{\bf D}_{i}f(U)&:=f(U)-f(U^{i}),\\
    \mathsf{\bf D}_{i,j}^2f(U)&:=f(U)-f(U^{i})-f(U^{j})+f(U^{i,j}).
\end{align*}
Note that the derivatives $\mathsf{D}$ and $\mathsf{\bf D}$ obey the relation $\mathsf{\bf D}_{i}f(U)=\mathsf{D}_{U_i}f(\mcal{U}_{n}^{i})$. Also, for random vectors $V,V'$ and $W$, we denote
\begin{align*}
    \gamma_{V,W}(f)&:=\mbb{E}\left(\mathds{1}_{\mbf{D}_{1,2}^{2}f(V)\neq 0}\mbf{D}_{2}f(W)^4\right),\\
    \gamma'_{V,V',W}(f)&:=\mbb{E}\left(\mathds{1}_{\mbf{D}_{1,2}^{2}f(V)\neq 0,\mbf{D}_{1,3}^{2}f(V')\neq 0}\mbf{D}_{2}f(W)^4\right),\\
    B_{n}(f)&:=\sup\{\gamma_{V,W}(f);V,W\ \text{recombinations of}\ \{U,U',U''\}\},\\
    B_{n}'(f)&:=\sup\{\gamma'_{V,V',W}(f);V,V',W\ \text{recombinations of}\ \{U,U',U''\}\}.
\end{align*}

\begin{Theorem}\label{Poincarebinomial}
    Let $n\ge 2$ and $F:=f(\mcal{U}_n)$ be a symmetric function of binomial process with $\mbb{E}f^{2}(\mcal{U}_n)<\infty$. Then, there exists a constant $c_0>0$ not depending on $f$ or $n$ such that
    \begin{align*}
        \mathsf{d}_{K}\left(\frac{F-\mbb{E}F}{\sqrt{\Var F_n}},\mcal{N}(0,1)\right)&\le c_0\Bigg(\frac{\sqrt{n}}{\Var F}\left(\sqrt{nB_{n}(f)}+\sqrt{n^2B_{n}'(f)}+\sqrt{\mbb{E}\mbf{D}_{1}f(U)^4}\right)\\
        &\quad +\sup_{V}~\frac{n}{(\Var F)^2}\mbb{E}|(f(U)-\mbb{E}F)(\mbf{D}_{1}f(V))^3|+\frac{n}{(\Var F)^{\frac{3}{2}}}\mbb{E}|\mbf{D}_{1}f(U)|^3\Bigg),
    \end{align*}
    where the supremum runs over all recombinations $V$ of $\{U,U',U''\}$. 
\end{Theorem}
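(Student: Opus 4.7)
The plan is to establish this Kolmogorov bound via the Malliavin-Stein method adapted to binomial point processes, following the strategy of \cite{lachieze2017new}. First, I would set up the Stein framework: writing $W = (F-\mbb{E}F)/\sqrt{\Var F}$, the Kolmogorov distance to $\mcal{N}(0,1)$ is bounded by $\sup_{t}|\mbb{E}[f_t'(W) - W f_t(W)]|$, where $f_t$ solves the Stein equation for $h_t = \onev_{(-\infty,t]}$. The solutions are Lipschitz with $\|f_t\|_\infty \le \sqrt{\pi/2}$ and $\|f_t'\|_\infty \le 2$, but $f_t''$ is only locally bounded (with a jump at $t$). This loss of smoothness relative to the Wasserstein case is precisely what forces the two extra remainder terms $\sup_V \frac{n}{(\Var F)^2}\mbb{E}|(f(U)-\mbb{E}F)(\mbf{D}_1 f(V))^3|$ and $\frac{n}{(\Var F)^{3/2}}\mbb{E}|\mbf{D}_1 f(U)|^3$ in the bound, which mimic the classical Berry-Esseen remainder terms.

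Second, I would build a discrete Malliavin-type integration by parts suitable for i.i.d.\ input. Using the auxiliary vectors $U', U''$, I construct an exchangeable coupling by resampling a uniformly chosen coordinate; this leads to identities relating $\mbb{E}[W f_t(W)]$ to sums of the form $\frac{1}{n}\sum_i \mbb{E}[\mbf{D}_i f(U) \cdot f_t(W)]$, where $\mbf{D}_i f(U) = f(U) - f(U^i)$. Taylor-expanding $f_t(W) - f_t(W_i)$ for a recombined $W_i$ produces a first-order term matching $\mbb{E}[f_t'(W)]$, with the discrepancy captured by second-order differences $\mbf{D}_{i,j}^2 f$. This is where the quantities $\gamma_{V,W}(f)$ and $\gamma'_{V,V',W}(f)$ enter: applying Cauchy--Schwarz (or H\"older) to cross-terms like $\mbb{E}[\onev_{\mbf{D}_{1,2}^2 f \neq 0}(\mbf{D}_1 f)(\mbf{D}_2 f)(\mbf{D}_{1,3}^2 f)]$ and then taking a supremum over recombinations yields exactly $B_n(f)^{1/2}$ and $B_n'(f)^{1/2}$, up to combinatorial constants.

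Third, I would collect the remainder estimates. The $\sqrt{\mbb{E}(\mbf{D}_1 f(U))^4}$ piece comes from controlling the "diagonal" contribution (i.e., the case $i=j$ in the double-sum expansion), which does not benefit from the vanishing second difference. The last two terms arise from the Taylor remainder $f_t(W) - f_t(W_i) - f_t'(W_i)(W-W_i)$, which (because $f_t''$ is not globally bounded) must be handled using a smoothing/recentering device: one integrates $f_t''$ against the empirical distribution of $W_i$, producing the $(f(U)-\mbb{E}F)(\mbf{D}_1 f(V))^3$ factor, while the remaining piece is bounded brutally by the third moment of $\mbf{D}_1 f(U)$.

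The main obstacle is the bookkeeping needed to reduce all cross-terms arising from the Taylor expansion to the four canonical quantities $B_n(f), B_n'(f), \mbb{E}(\mbf{D}_1 f)^4, \mbb{E}|\mbf{D}_1 f|^3$, together with correctly tracking which recombination of $\{U,U',U''\}$ each intermediate term lives on. This is because the binomial (i.i.d.) setting lacks the clean Mehler semigroup available for Poisson functionals, so one cannot invoke a ready-made integration-by-parts formula — instead, the exchangeable-pair identities must be derived and iterated by hand, and the conditional independence structure exploited carefully every time an indicator $\onev_{\mbf{D}_{1,2}^2 f \neq 0}$ is introduced. Once these steps are consolidated, Theorem \ref{Poincarebinomial} follows with an absolute constant $c_0$ independent of $n$ and $f$.
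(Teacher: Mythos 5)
The paper does not prove this statement at all: Theorem \ref{Poincarebinomial} is imported verbatim from \citet[Theorem 5.1, Remark 5.2 and Proposition 5.3]{lachieze2017new} (see also \citet[Theorem 4.3]{lachieze2019normal}), so there is no internal proof to compare your argument against. Your sketch is broadly faithful to the strategy of the cited original: Stein's method for the Kolmogorov distance with the non-smooth test functions forcing the two Berry--Esseen-type remainder terms, second-order difference operators $\mbf{D}^2_{i,j}f$ controlling the departure from independence, and the recombinations of $\{U,U',U''\}$ entering through H\"older bounds that produce $B_n(f)^{1/2}$ and $B_n'(f)^{1/2}$.

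That said, as a standalone proof your proposal has a genuine gap at its core: the discrete integration-by-parts identity relating $\mbb{E}[W f_t(W)]$ to the difference operators is asserted, not derived. In \citet{lachieze2017new} this identity comes from the Hoeffding/ANOVA orthogonal decomposition of symmetric statistics of i.i.d.\ input (a discrete Malliavin calculus), not from an exchangeable-pair coupling obtained by resampling a uniformly chosen coordinate; while the two devices are related, the exchangeable-pair route does not automatically deliver the specific quantities $\gamma_{V,W}(f)$ and $\gamma'_{V,V',W}(f)$ with the indicator $\mathds{1}_{\mbf{D}^2_{1,2}f(V)\neq 0}$ sitting on one recombination and the fourth power $\mbf{D}_2 f(W)^4$ on another. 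The reduction of all cross-terms to exactly the four canonical quantities $B_n(f)$, $B_n'(f)$, $\mbb{E}\mbf{D}_1 f(U)^4$ and $\mbb{E}|\mbf{D}_1 f(U)|^3$ is precisely the hard combinatorial content of the original proof, and your outline acknowledges this bookkeeping as "the main obstacle" without carrying it out. Given that the present paper simply cites the result, the appropriate resolution here is likewise a citation rather than a reconstruction.
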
 

We are now ready to prove Theorem~\ref{rateswithconstant}.
\begin{proof}[Proof of Theorem \ref{rateswithconstant}]
    We will apply Theorem \ref{Poincarebinomial} with $f=nE_n$ to obtain the result. We start by bounding $\gamma_{V,W}(f)$ in Theorem \ref{Poincarebinomial}. Following \citet[Proof of Theorem 4.2]{lachieze2019normal}, we obtain
    $$
        \gamma_{V,W}(f)
        \le \int_{\XDE{\mbb{X}}^{4}}\mbb{P}(\mathsf{D}^2_{\xde{v}_1,\xde{v}_2}f(V^{1,2})\neq 0)^{\frac{p/2}{4+p/2}}(\mbb{E}(\mathsf{D}_{\xde{w}_2}f(W^{1,2}\cup\{\xde{w}_1\}))^{4+p/2})^{\frac{4}{4+p/2}}\XDE{\mbb{Q}}^4(d(\xde{v}_1,\xde{v}_2,\xde{w}_1,\xde{w}_2)).
    $$
    Note that in contrast to assuming a bounded $(4+p)$-th moment as in \citet[Theorem 4.2]{lachieze2019normal}, we use Lemma \ref{MBofFn} to bound $\mbb{E}(\mathsf{D}_{\hat{w}_2}f(W^{1,2}\cup\{\hat{w}_1\}))^{4+p/2}$ keeping track of its dependence on $M,\eta,p$ and the $\varepsilon$'s associated with $\xde{\omega}_1$ and $\xde{\omega}_2$. 
    
    As in the proof of Lemma \ref{MBofFn}, the upper bound is a sum of these $\varepsilon$'s which can thus be integrated over due to our Assumption B.2 in Section \ref{AssumpB}.
We thus obtain
    \begin{align*}
        \gamma_{V,W}(f)\lesssim \left(\Big(\frac{M}{\zeta\eta}\Big)^{5}\vee 1\right)^{\frac{4}{4+p/2}}\int_{\XDE{\X}^{2}}\psi_{n}(\xde{x},\xde{x'})\XDE{\mbb{Q}}^{2}(d(\xde{x},\xde{x'})),
    \end{align*}
    where recall the definition of $\psi_n$ in Theorem \ref{rateswithconstant}. Therefore, for the first term of the bound in Theorem \ref{Poincarebinomial}, we have
    \begin{align*}
        \frac{\sqrt{n}}{\Var (nE_n)}\sqrt{nB_n(nE_n)}\lesssim c(M,\eta,p)^{\frac{2}{4+p/2}}\frac{1}{n\Var E_n}\sqrt{\int_{\XDE{\X}^{2}}\psi_{n}(\xde{x},\xde{x'})\XDE{\mbb{Q}}^{2}(d(\xde{x},\xde{x'}))},
    \end{align*}
    with
    \begin{align*}
        c(M,\eta,p)\asymp\Big(\frac{M}{\zeta\eta}\Big)^{5}\vee 1.
    \end{align*}
 This gives the desired bound $S_1$ in Theorem \ref{rateswithconstant}. For the rest of the terms of the bound in Theorem \ref{Poincarebinomial}, following an almost identical argument as in \citet[Proof of Theorem 4.2]{lachieze2019normal} by specifying $c$ there as $c(M,\eta,p)$ above, we obtain the bounds $S_{i}$'s for $i=2,\ldots,5$.
\end{proof}

\section{Proof of Lemma \ref{rank_variancelowerbound} and Lemma \ref{rank_biasbound}}\label{sec:ranklemma}
The proof of both of these results involve the convergence rate of the density ratio estimation similar to Proof of Lemma \ref{Variancelowerbound} and Lemma \ref{lemma:l2convdensityratio}, which we first present in the following.

According to \citet[Proof of Theorem 5.2, Section A.8]{cattaneo2023rosenbaum}, 
letting 
\begin{align*}
    \hat{r}_{\phi}(x):=\frac{n_0}{n_1}\frac{K_{\phi}(x)}{M},
\end{align*}
it can be viewed as an estimator for the density ratio $r:=g_{\phi,\omega,1}/g_{\phi,\omega,0}$ (see also below). From now on, for simplicity, we will drop $\omega$ from the subscripts noting that all statements here hold for any $\omega\in\{0,1\}$. 

Abusing the notation  restricted only to this section, let $\{X_i\}_{i=1}^{n_0}$ and $\{Z_{i}\}_{i=1}^{n_1}$ be two i.i.d.\ sample from the control group ($D=0$) and the treatment group ($D=1$), respectively. Define the catchment area of $x$ as
\begin{align*}
    A_{\phi}(x):=\{z\in \mbb{R}^d:\|\hat{\phi}(x)-\hat{\phi}(z)\|\le \hat{\Phi}_{M}(z)\},
\end{align*}
where $\hat{\Phi}_{M}(z)$ is the $M$-th order statistics of $\{\|\hat{\phi}(X_i)-\hat{\phi}(z)\|\}_{i=1}^{n_0}$. Note that
\begin{align*}
    K_{\phi}(x)=\sum_{j=1}^{n_1}\mathds{1}(Z_j\in A_{\phi}(x)).
\end{align*}
Then, the density ratio estimator can be written as
\begin{align}\label{def:densityratioest_rank}
    \hat{r}_{\phi}(x)=\frac{n_0}{M n_1}\sum_{j=1}^{n_1}\mathds{1}(Z_j\in A_{\phi}(x)) .
\end{align}
Observe now that provided there are no ties in $\{\|\hat{\phi}(X_i)-\hat{\phi}(z)\|\}_{i=1}^{n_0}$, we have $M = \sum_{j=1}^{n_1}\mathds{1}(X_j\in A_{\phi}(x))$ almost surely, so that $$\hat{r}_{\phi}(x)= \frac{\frac{1}{n_1}\sum_{j=1}^{n_1}\mathds{1}(Z_j\in A_{\phi}(x))}{\frac{1}{n_0}\sum_{j=1}^{n_1}\mathds{1}(X_j\in A_{\phi}(x))}.$$
In \citet[Theorem A.2]{cattaneo2023rosenbaum}, it has been shown that
\begin{align*}
    \lim_{n_0\rightarrow\infty} \mbb{E}|\hat{r}_{\phi}(X)-r(\phi(X))|^2=0.
\end{align*}
In the following, we focus on the rate for such a convergence. The arguments will largely follow \citet[Proof of Lemma B.1]{lin2023estimationsupp} and \citet[Proof of Lemma A.3, Section B.3]{cattaneo2023rosenbaum}. However, we will quantify their convergence arguments under our Assumption C.6 in Section \ref{AssumpC}. Following \citet[Proof of Theorem B.3]{lin2023estimationsupp}, in \citet[Proof of Lemma A.3, Section B.3]{cattaneo2023rosenbaum} Part I Case I, we set 
\begin{align*}
3\delta\ge \delta_n=\left(\frac{4}{g_{\min}V_{m'}}\right)^{1/m'}\left(\frac{M}{n_0}\right)^{1/m'},
\end{align*}
where $V_{m'}$ is the volume of the unit ball in $\mbb{R}^{m'}$, and according to Assumption C.6 in Section \ref{AssumpC}, we set 
\begin{align*}
\epsilon=\frac{3\delta L_{\phi}}{g_{\phi,0}(\phi(x))\wedge g_{\phi,1}(\phi(x))}.
\end{align*}
Also, set $\epsilon'=\frac{\delta}{4\text{diam}(S_0)}$. 
Again, similar to Lemma \ref{Variancelowerbound}, we still have the same smaller order errors $\delta_{H_1}$- $\delta_{H_3}$ here due to Lemma \ref{lemma:l2convdensityratio}. For the convenience of presentation, we omit these smaller order terms in the following part of the proof, and only include them in the final step, i.e., all the arguments in the following happen conditional on the event $I=\{|n_0-np_0|<np_0/2\}$ in Lemma \ref{lemma:l2convdensityratio}. For the upper bound, according to \citet[Proof of Lemma A.3, Equation B.6]{cattaneo2023rosenbaum}, we have
\begin{align*}
    \mbb{P}(Z_1\in A_{\phi}(x))&\le \mbb{P}(\|\phi(x)-\phi(Z_1)\|-4\|\hat{\phi}-\phi\|_{\infty}\le \Phi_{M}(Z_1), 4\|\hat{\phi}-\phi\|_{\infty}\le \epsilon'\|\phi(x)-\phi(Z_1)\|)\\
    &\quad + \mbb{P}(\|\phi(x)-\phi(Z_1)\|-4\|\hat{\phi}-\phi\|_{\infty}\le \Phi_{M}(Z_1), 4\|\hat{\phi}-\phi\|_{\infty}> \epsilon'\|\phi(x)-\phi(Z_1)\|)\\
    &=:\Phi_1+\Phi_2,
\end{align*}
where $\Phi_{M}(z)$ is the $M$-th order statistics of $\{\|\phi(X_i)-\phi(z)\|\}_{i=1}^{n_0}$. From \cite{cattaneo2023rosenbaum}, one obtains
\begin{align*}
    \Phi_1\le \mbb{P}\left(\left(\frac{1-\epsilon}{1+\epsilon}-\frac{1+\epsilon}{1-\epsilon}m\epsilon'\right)\frac{g_{\phi,0}(\phi(x))}{g_{\phi,1}(\phi(x))}U\le U_{(M)}\right)+\mbb{P}\left(U_{(M)}>\eta_{n}\frac{M}{n_0}\right),
\end{align*}
where $\eta_n:=4\log(n_0/M)$, $U\sim \operatorname{Unif}[0,1]$ and $U_{(M)}$ is the $M$-th order statistic of $n_0$ independent random variables from $\operatorname{Unif}[0,1]$. According to \citet[Proof of Lemma B.1, Equation S3.5]{lin2023estimationsupp} and \citet[Proof of Lemma A.3, Section B.3]{cattaneo2023rosenbaum}, we can further bound $\Phi_1$ by
\begin{align*}
    \frac{n_0}{M}\Phi_1\le \left(\frac{1-\epsilon}{1+\epsilon}-\frac{1+\epsilon}{1-\epsilon}m\epsilon'\right)^{-1}\frac{g_{\phi,1}(\phi(x))}{g_{\phi,0}(\phi(x))}\frac{n_0}{n_0+1}+\left(\frac{n_0}{M}\right)^{1-2M}.
\end{align*}
As for $\Phi_2$, according to \citet[Proof of Lemma A.3, Section B.3]{cattaneo2023rosenbaum}, we have
\begin{align*}
    \Phi_2\lesssim (\delta\epsilon')^{-m}\mbb{E}(\sup_{z\in S_1}\|\hat{\phi}_{Z_1\rightarrow z}-\phi\|_{\infty}^{m}),
\end{align*}
where for any positive integer $\ell>0$, we write $\hat{\phi}_{(Z_1,\ldots,Z_{\ell})\rightarrow z}$ as the estimator replacing $(Z_1,\ldots,Z_{\ell})$ by $z$ for $z\in S_1^{\ell}$. Combining the bounds above yields
\begin{align*}
    \frac{n_0}{M}\mbb{P}(Z_1\in A_{\phi}(x))-\frac{g_{\phi,1}(\phi(x))}{g_{\phi,0}(\phi(x))}
    &\le \frac{n_0}{M}(\Phi_1+\Phi_2)-\frac{g_{\phi,1}(\phi(x))}{g_{\phi,0}(\phi(x))}\\
    &\lesssim \left(\frac{\epsilon}{1+\epsilon}+\frac{1+\epsilon}{1-\epsilon}m\epsilon'\right)\frac{g_{\phi,1}(\phi(x))}{g_{\phi,0}(\phi(x))}+\frac{g_{\phi,1}(\phi(x))}{g_{\phi,0}(\phi(x))}\frac{1}{n_0+1}+\left(\frac{n_0}{M}\right)^{1-2M}\\
    &\qquad \qquad + \frac{n_0}{M}(\delta\epsilon')^{-m}\mbb{E}(\sup_{z\in S_1}\|\hat{\phi}_{Z_1\rightarrow z}-\phi\|_{\infty}^{m})\\
    &\lesssim \epsilon+\epsilon'+\frac{1}{n_0}+\left(\frac{n_0}{M}\right)^{1-2M}+\frac{n_0}{M}(\delta\epsilon')^{-m}\mbb{E}(\sup_{z\in S_1}\|\hat{\phi}_{Z_1\rightarrow z}-\phi\|_{\infty}^{m}).
\end{align*}
Similarly, following \citet[Proof of Lemma A.3, Section B.3]{cattaneo2023rosenbaum}, one can obtain the lower bound
\begin{align*}
    \frac{n_0}{M}\mbb{P}(Z_1\in A_{\phi}(x))-\frac{g_{\phi,1}(\phi(x))}{g_{\phi,0}(\phi(x))}\gtrsim \epsilon+\epsilon'+\frac{1}{n_0}+\left(\frac{n_0}{M}\right)^{1-2M}-\frac{n_0}{M}(\delta\epsilon')^{-m}\mbb{E}(\sup_{z\in S_1}\|\hat{\phi}_{Z_1\rightarrow z}-\phi\|_{\infty}^{m}).
\end{align*}
Therefore, we have
\begin{align*}
    &\left|\frac{n_0}{M}\mbb{P}(Z_1\in A_{\phi}(x))-\frac{g_{\phi,1}(\phi(x))}{g_{\phi,0}(\phi(x))}\right|\lesssim \epsilon+\epsilon'+\frac{1}{n_0}+\frac{n_0}{M}(\delta\epsilon')^{-m}\mbb{E}(\sup_{z\in S_1}\|\hat{\phi}_{Z_1\rightarrow z}-\phi\|_{\infty}^{m}).
\end{align*}

As for Part I Case II in \citet[Proof of Lemma A.3, Section B.3]{cattaneo2023rosenbaum}, we set $\delta$ and $\epsilon'$ the same as before and let 
$$
\epsilon=\frac{3\delta L_{\phi}}{g_{\phi,0}(\phi(x))\wedge 1}.
$$ 
Note in this case that $r(\phi(x))=g_{\phi,1}(\phi(x))/g_{\phi,0}(\phi(x))=0$. Then, similar to the bound for $\Phi_1$ and $\Phi_2$, following \citet[Proof of Lemma A.3, Section B.3]{cattaneo2023rosenbaum}, we have
\begin{align*}
    \frac{n_0}{M}\mbb{P}(Z_1\in A_{\phi}(x))-\frac{g_{\phi,1}(\phi(x))}{g_{\phi,0}(\phi(x))}&\lesssim \epsilon (1-\epsilon')^{-m}(1-\epsilon')^{-1}\frac{1}{g_{\phi,0}(\phi(x))}\frac{n_0}{n_0+1}+\left(\frac{n_0}{M}\right)^{1-2M}\\
    &\qquad + \frac{n_0}{M}\epsilon(\epsilon')^{-m}\mbb{E}(\sup_{z\in S_1}\|\hat{\phi}_{Z_1\rightarrow z}-\phi\|_{\infty}^{m})\\
    &\lesssim \epsilon+\epsilon'+\frac{1}{n_0}+\epsilon(\epsilon')^{-m} \left(\frac{n_0}{M}\mbb{E}(\sup_{z\in S_1}\|\hat{\phi}_{Z_1\rightarrow z}-\phi\|_{\infty}^{m})\right).
\end{align*}
Combining these two cases above, we conclude that
\begin{align}\label{boundsummand}
    \left|\frac{n_0}{M}\mbb{P}(Z_1\in A_{\phi}(x))-r(\phi(x))\right|\lesssim \epsilon+\epsilon'+\frac{1}{n_0}+(\delta\epsilon')^{-m} \left(\frac{n_0}{M}\mbb{E}(\sup_{z\in S_1}\|\hat{\phi}_{Z_1\rightarrow z}-\phi\|_{\infty}^{m})\right).
\end{align}
Note that \cite{cattaneo2023rosenbaum} proposed the assumption (see Assumption A.1 therein)
\begin{align*}
    \frac{n_0}{M}\mbb{E}(\sup_{z\in S_1}\|\hat{\phi}_{Z_1\rightarrow z}-\phi\|_{\infty}^{m})\rightarrow 0,
\end{align*}
for their consistency result without any rates. Here, we instead obtain a bound where this term features in the bound itself. Now, since
\begin{align*}
    \mbb{E}\hat{r}_{\phi}(x)=\frac{n_0}{M n_1}\sum_{j=1}^{n_1}\mbb{E}\mathds{1}(Z_j\in A_{\phi}(x)),
\end{align*}
by \eqref{boundsummand}, we have
\begin{align}\label{biasofr}
    \left|\mbb{E}\hat{r}_{\phi}(x)-r(\phi(x))\right|\lesssim \epsilon+\epsilon'+\frac{1}{n_0}+(\delta\epsilon')^{-m} \left(\frac{n_0}{M}\mbb{E}(\sup_{z\in S_1}\|\hat{\phi}_{Z_1\rightarrow z}-\phi\|_{\infty}^{m})\right).
\end{align}
Here, we again recall that we have so far omitted those smaller order terms $\delta_{H_1}$-$\delta_{H_3}$ appearing in Lemma \ref{lemma:l2convdensityratio} and all our arguments so far are conditional on the event $I=\{|n_0-np_0|<np_0/2\}$, as in Lemma \ref{lemma:l2convdensityratio}. We will only including those error terms back at the end of our argument.

For the variance estimate in the second assertion, note according to the law of total variance that $\Var \hat{r}_{\phi}(x)=\mbb{E}\Var (\hat{r}_{\phi}(x)|X)+\Var \mbb{E}(\hat{r}_{\phi}(x)|X)$. The first term can be directly bounded via \eqref{biasofr} as
\begin{align*}
    \mbb{E}\Var (\hat{r}_{\phi}(x)|X)&=\frac{n_0^2}{M^2 n_1^2}\mbb{E}\Var\left(\sum_{j=1}^{n_1}\mathds{1}(Z_j\in A_{\phi}(x))|X\right)\nonumber\\
    &=\frac{n_0^2}{M^2 n_1}\mbb{E}\Var\left(\mathds{1}(Z\in A_{\phi}(x))|X\right)\nonumber\\
    &\le \frac{n_0^2}{M^2 n_1}\mbb{P}(Z\in A_{\phi}(x))\le \frac{n_0}{Mn_1}\mbb{E}\hat{r}_{\phi}(x)\lesssim \frac{n_0}{Mn_1}.
\end{align*}
For the second term, it holds that
\begin{align*}
    \Var \mbb{E}(\hat{r}_{\phi}(x)|X)&=\frac{n_0^2}{M^2 n_1^2}\Var\mbb{E}\bigg(\sum_{j=1}^{n_1}\mathds{1}(Z_j\in A_{\phi}(x))|X\bigg)\nonumber\\
    &=\frac{n_0^2}{M^2}\Var\mathds{1}(Z\in A_{\phi}(x))=\frac{n_0^2}{M^2}(\mbb{E}\mathds{1}(Z\in A_{\phi}(x))-(\mbb{E}\mathds{1}(Z\in A_{\phi}(x)))^2).
\end{align*}
Following \citet[Proof of Theorem B.3(ii)]{lin2023estimationsupp} and \citet[Proof of Lemma A.3, Part II]{cattaneo2023rosenbaum}, we have
\begin{align*}
    \frac{n_0^2}{M^2}\Var\mathds{1}(Z\in A_{\phi}(x))\asymp \frac{1}{M}+(\delta\epsilon')^{-2m} \bigg(\frac{n_0^2}{M^2}\mbb{E}(\sup_{z\in S_1^2}\|\hat{\phi}_{(Z_1,Z_2)\rightarrow z}-\phi\|_{\infty}^{2m})\bigg),
\end{align*}
where the second term comes from the third term in \citet[Proof of Lemma A.3, Equation B.8]{cattaneo2023rosenbaum} when replacing $\hat{\phi}$ by $\phi$. Note here, \citet[Assumption A.1]{cattaneo2023rosenbaum} requires that
\begin{align*}
    \lim_{n_0\rightarrow\infty}\frac{n_0^2}{M^2}\mbb{E}(\sup_{z\in S_1^2}\|\hat{\phi}_{(Z_1,Z_2)\rightarrow z}-\phi\|_{\infty}^{2m})=0,
\end{align*}
which does not offer any rates of convergence. Instead, we keep this term in our bound. Combining the two bounds above, we obtain via the law of total variance that
\begin{align}\label{varofr}
    \Var \hat{r}_{\phi}(x)\lesssim \frac{n_0}{M n_1}+\frac{1}{M}+(\delta\epsilon')^{-2m} \bigg(\frac{n^2}{M^2}\mbb{E}(\sup_{z\in S_1^2}\|\hat{\phi}_{(Z_1,Z_2)\rightarrow z}-\phi\|_{\infty}^{2m})\bigg).
\end{align}
Therefore, putting \eqref{biasofr} and \eqref{varofr} together, we have
\begin{align}\label{l2x}
    \mbb{E}|\hat{r}_{\phi}(x)-r(\phi(x))|^2
    &= |\mbb{E}\hat{r}_{\phi}(x)-r(\phi(x))|^2+\Var \hat{r}_{\phi}(x) \nonumber\\
    &\lesssim \epsilon^2+\epsilon'^2+\frac{1}{M}+\frac{1}{n^{2/3}}+(\delta\epsilon')^{-2m} \bigg(\frac{n^2}{M^2}\mbb{E}(\sup_{z\in S_1^2}\|\hat{\phi}_{(Z_1,Z_2)\rightarrow z}-\phi\|_{\infty}^{2m})\bigg).
\end{align}
Finally, plugging the bound \eqref{l2x} in \citet[Proof of Theorem A.2, Section B.4]{cattaneo2023rosenbaum}, we obtain the following bound for the $L^2$ convergence of $\hat{r}_{\phi}(X)$:
\begin{align}\label{L2x}
    &\mbb{E}|\hat{r}_{\phi}(X)-r(\phi(X))|^2 \lesssim \left(\frac{M}{n_0}\right)^{1/m'}\hspace{-.3cm}+\epsilon^2+(\epsilon')^2+\frac{1}{M}+(\delta\epsilon')^{-2m} \left(\frac{n^2}{M^2}\mbb{E}(\sup_{z\in S_1^2}\|\hat{\phi}_{(Z_1,Z_2)\rightarrow z}-\phi\|_{\infty}^{2m})\right).
\end{align}
The bound above involves an additional term related to the approximation error for $\phi$ by $\hat{\phi}$, as well as the error terms $\epsilon,\epsilon',\delta$. According to the definition of $\delta$ above, it is only required that 
\begin{align*}
\delta\gtrsim \left(\frac{M}{np_0}\right)^{1/m'}\ge \left(\frac{M}{n\eta}\right)^{1/m'}.
\end{align*}
Moreover, by definition, we have $\epsilon,\epsilon'\asymp \delta$. When we take $\delta\asymp (M(n\eta)^{-1})^{1/m'}$, it yields that
\begin{align*}
    &\mbb{E}|\hat{r}_{\phi}(X)-r(\phi(X))|^2 \lesssim \left(\frac{M}{n_0}\right)^{1/m'}+\frac{1}{M}+\left(\frac{M}{n}\right)^{4m/m'} \bigg(\frac{n^2}{M^2}\mbb{E}(\sup_{z\in S_1^2}\|\hat{\phi}_{(Z_1,Z_2)\rightarrow z}-\phi\|_{\infty}^{2m})\bigg).
\end{align*}
Recall the smaller order terms $\delta_{H_1}$-$\delta_{H_3}$ in Lemma \ref{lemma:l2convdensityratio}. Here, we have so far omitted all those terms and have only focused on bounding conditional on the event $I=\{|n_0-np_0|<np_0/2\}$, as in Lemma \ref{lemma:l2convdensityratio} so far for simplicity that also appeared in Lemma \ref{Variancelowerbound}. Now adding those terms back, we obtain the bound
\begin{align*}
    \mbb{E}|\hat{r}_{\phi}(X)-r(\phi(X))|^2
    &\lesssim 
    \frac{1}{\eta^2}\left(\frac{M}{n\eta}\right)^{1/m'}+\delta_{H_1}+(\delta_{H_2}+1)\cdot\frac{1}{\eta^2 M}+\delta_{H_3}\nonumber\\
    &\qquad +\left(\frac{M}{n}\right)^{4m/m'} \bigg(\frac{n^2}{M^2}\sup_{x_1,x_2\in \X}\|\hat{\phi}_{\omega}(\cdot;x_1,x_2)-\phi_{\omega}\|_{\infty}^{2m}\bigg).
\end{align*}
We are now ready to prove the bounds in Lemmas \ref{rank_variancelowerbound} and \ref{rank_biasbound}. Focusing first on Lemma \ref{rank_variancelowerbound}, plugging in the definition of the density ratio in \eqref{def:densityratioest_rank}, it becomes
\begin{align}\label{densityratioconv_rank}
    &\mbb{E}\bigg(\bigg(\frac{K_{\phi}(1)}{M}-\frac{1-e(X_1)}{e(X_1)}\bigg)^{2}\bigg|\{D_i\}_{i=1}^{n}\bigg)\mathds{1}(n_1>0)\nonumber\\
    &\lesssim \frac{1}{\eta^2}\left(\frac{M}{n\eta}\right)^{1/m'}\hspace{-.3cm}+\delta_{H_1}+(\delta_{H_2}+1)\cdot\frac{1}{\eta^2 M}+\delta_{H_3}+\left(\frac{M}{n}\right)^{4m/m'} \bigg(\frac{n^2}{M^2}\sup_{x_1,x_2\in \X}\|\hat{\phi}_{\omega}(\cdot;x_1,x_2)-\phi_{\omega}\|_{\infty}^{2m}\bigg),
\end{align}
proving the first bound.

For the variance estimation, following the corresponding arguments in the Proof of Lemma \ref{Variancelowerbound} in Section \ref{pf:covariate} yields
\begin{align*}
    |n\Var E_{\phi,n}^*-\sigma_{\phi}^2|&\lesssim \mbb{E}\bigg(\frac{1}{n}\sum_{i=1,D_i=1}^{n}\bigg(\bigg(1+\frac{K_{\phi}^*(i)}{M}\bigg)^2-\left(\frac{1}{e(X_i)}\right)^2\bigg)\sigma_{\phi,1}^2(X_i)\bigg)
    \nonumber\\&\quad + \mbb{E}\bigg(\frac{1}{n}\sum_{i=1,D_i=0}^{n}\bigg(\bigg(1+\frac{K_{\phi}^*(i)}{M}\bigg)^2-\left(\frac{1}{1-e(X_i)}\right)^2\bigg)\sigma_{\phi,0}^2(X_i)\bigg)\\
    &\coloneqq \mbb{E}J_2'+\mbb{E}J_3',
\end{align*}
where
\begin{align}\label{def:sigmaphi}
        \sigma_{\phi}^2:=\Var (\mu_{\phi,1}(L_{\phi,1})-\mu_{\phi,0}(L_{\phi,0}))+\mbb{E}\bigg(\frac{\sigma_{\phi,1}^2(X)}{e(X)}+\frac{\sigma^2_{\phi,0}(X)}{1-e(X)}\bigg),
\end{align}
with $e(x):=\mbb{P}(D=1|X=x)$ and $\sigma_{\phi,\omega}(x)^2:= \mbb{E} [(Y(\omega)-\mu_{\phi,\omega}(L_{\phi,\omega}))^2 | X=x]$. 

Similar to bounding $\mbb{E}|J_2|$ in \eqref{J2} in the Proof of Lemma \ref{Variancelowerbound}, we have
\begin{align*}
    \mbb{E}|J_2'|\lesssim \mbb{E}\bigg(\mbb{E}\bigg(\bigg(\frac{K_{\phi}^*(1)}{M}-\frac{1-e(X_1)}{e(X_1)}\bigg)^{2}\Bigg|\{D_i\}_{i=1}^{n}\bigg)\mathds{1}(n_1>0)\bigg)^{1/2}.
\end{align*}
Comparing to the bound in \eqref{densityratioconv_rank}, a bound for $\mbb{E}|J_2'|$ above can be obtained by replacing $K_{\phi}(1)$ (defined by using $\{\hat{L}_{\phi,\omega,i}=\hat{\phi}_{\omega}(X_i)\}$) by $K^*_{\phi}(1)$ (defined by replacing $\hat{\phi}_{\omega}(X_i)$ by $\phi_{\omega}(X_i)$) in the proof. Then, without the additional term involving the convergence of $\hat{\phi}$ to $\phi$ in \eqref{densityratioconv_rank}, we have
\begin{align*}
    \mbb{E}|J_2'|\lesssim \frac{1}{\eta}\left(\frac{M}{n\eta}\right)^{1/(2m')}+\delta_{H_1}^{1/2}+(\delta_{H_2}^{1/2}+1)\cdot\frac{1}{\eta M^{1/2}}+\delta_{H_3}^{1/2}.
\end{align*}
This proves the second assertion. The same bound also holds for $\mbb{E}|J_3'|$ by symmetry, yielding
\begin{align}\label{generaleps}
    |n\Var E_{\phi,n}^*-\sigma_{\phi}^2|\lesssim\frac{1}{\eta}\left(\frac{M}{n\eta}\right)^{1/(2m')}+\delta_{H_1}^{1/2}+(\delta_{H_2}^{1/2}+1)\cdot\frac{1}{\eta M^{1/2}}+\delta_{H_3}^{1/2}+\frac{1}{\eta^{3}n^{1/3}},
\end{align}
where the last term $\frac{1}{\eta^{3}n^{1/3}}$ appears due to bounding $\mbb{E}J_{212}$ similar to \eqref{J2boundtotal} outside the event $I$. This proves the third assertion and completes the proof of Lemma \ref{rank_variancelowerbound}.

Next, we prove Lemma \ref{rank_biasbound}. According to \citet[Proof of Lemma A.2, Section B.1]{cattaneo2023rosenbaum} along with similar arguments used in \eqref{adderrorinbias}, we have
\begin{align*}
    \mbb{E}|B_{\phi,n}-\hat{B}_{\phi,n}|&\lesssim (\eta^{-k/m'}+\delta_{H_1})\Bigg(\bigg(\frac{M}{n}\bigg)^{k/m'}+n^{-k/2}+\max_{l\in [k-1]}\bigg(n^{-\gamma_{\phi,l}}\bigg(\left(\frac{M}{n}\right)^{l/m'}+n^{-l/2}\bigg)\bigg)\nonumber\\
    &\qquad+\lim_{\delta\rightarrow 0}\mbb{E}\sup_{x,y\in\mbb{X},\|\phi(x)-\phi(y)\|\le \delta}\|(\hat{\phi}-\phi)(x)-(\hat{\phi}-\phi)(x)\|_{\infty}\Bigg),
\end{align*}
where $k:=\lfloor m'/2 \rfloor\vee 1 +1$ and $\gamma_{\phi,l}$'s are given in Assumption D.4 in Section \ref{AssumpD}. This proves the first assertion. 
Moreover,
\begin{align*}
    \sqrt{n}\mbb{E}|\Delta E_{\phi,n}|
    &=\sqrt{n}\mbb{E}\bigg|\frac{1}{n}\sum_{i=1}^{n}(2D_i-1)\bigg(\bigg(1+\frac{K_{\phi}(i)}{M}\bigg)-\bigg(1+\frac{K^*_{\phi}(i)}{M}\bigg)\bigg)\bm{\varepsilon}_{\phi,i}\bigg|\\
    &\le \sqrt{n}\bigg(\mbb{E}\bigg(\frac{1}{n}\sum_{i=1}^{n}(2D_i-1)\bigg(\bigg(1+\frac{K_{\phi}(i)}{M}\bigg)-\left(1+\frac{K^*_{\phi}(i)}{M}\right)\bigg)\bm{\varepsilon}_{\phi,i}\bigg)^2\bigg)^{\frac{1}{2}}\\
    &=\bigg(\mbb{E}\bigg(\frac{1}{n}\sum_{i=1}^{n}\bigg(\bigg(1+\frac{K_{\phi}(i)}{M}\bigg)-\bigg(1+\frac{K^*_{\phi}(i)}{M}\bigg)\bigg)^2\sigma^2_{\phi,D_i}(X_i)\bigg)\bigg)^{\frac{1}{2}},
\end{align*}
where in the last step, we have used the independence and conditional mean zero properties of $\bm{\varepsilon}_{\phi,i}$. Now, we can further bound
\begin{align}\label{H1234}
    &\mbb{E}\bigg(\frac{1}{n}\sum_{i=1}^{n}\bigg(\bigg(1+\frac{K_{\phi}(i)}{M}\bigg)-\bigg(1+\frac{K^*_{\phi}(i)}{M}\bigg)\bigg)^2\sigma^2_{\phi,D_i}(X_i)\bigg)\nonumber\\
    &\lesssim \mbb{E}\bigg(\frac{1}{n}\sum_{i=1,D_i=1}^{n}\bigg(\frac{K_{\phi}(i)}{M}-\frac{K_{\phi}^*(i)}{M}\bigg)^2\sigma^2_{\phi,1}(X_i)\bigg)+\mbb{E}\bigg(\frac{1}{n}\sum_{i=1,D_i=0}^{n}\bigg(\frac{K_{\phi}(i)}{M}-\frac{K_{\phi}^*(i)}{M}\bigg)^2\sigma^2_{\phi,0}(X_i)\bigg)\nonumber\\
    &=:H_1+H_2.
\end{align}
We only bound $H_1$, since $H_2$ can be bounded in an identical way. Similar to \eqref{J2}, $H_1$ can be bounded by
\begin{align*}
    H_1&\lesssim\mbb{E}\bigg(\bigg(\frac{K_{\phi}(1)}{M}-\frac{K_{\phi}^*(1)}{M}\bigg)^{2}\bigg|\{D_i\}_{i=1}^{n}\bigg)\mathds{1}(n_1>0).
\end{align*}
Recall that $K_{\phi}(1)$ is defined by using $\{\hat{L}_{\phi,\omega,i}=\hat{\phi}_{\omega}(X_i)\}$ while $K^*_{\phi}(1)$ is defined by replacing $\hat{\phi}_{\omega}(X_i)$ as $\phi_{\omega}(X_i)$ in the definition of $K_{\phi}(1)$. Following the proof of \eqref{densityratioconv_rank}, and noting that the only difference here is the term involving approximation error of $\phi$ by $\hat{\phi}$, we have
\begin{align*}
    H_1\lesssim (\delta\epsilon')^{-2m} \bigg(\frac{n_0^2}{M^2}\mbb{E}(\sup_{x_1,x_2\in \mbb{X}}\|\hat{\phi}_{\omega}(\cdot;x_1,x_2)-\phi_{\omega}\|_{\infty}^{2m})\bigg).
\end{align*}
The same bound also holds for $H_2$ similarly. Together, \eqref{H1234} yields
\begin{align}\label{withdeltabiasbound}
    \sqrt{n}\mbb{E}|\Delta E_{\phi,n}|
    &\lesssim(\delta\epsilon')^{-m} \bigg(\frac{n_0^2}{M^2}\mbb{E}(\sup_{x_1,x_2\in \mbb{X}}\|\hat{\phi}_{\omega}(\cdot;x_1,x_2)-\phi_{\omega}\|_{\infty}^{2m})\bigg)^{1/2}.
\end{align}
Taking again $\epsilon,\epsilon'\asymp \delta\asymp (M(n\eta)^{-1})^{1/m'}$, we obtain
\begin{align*}
    \sqrt{n}\mbb{E}|\Delta E_{\phi,n}|
    \lesssim \left(\frac{M}{n}\right)^{2m/m'} \bigg(\frac{n^2}{M^2}\sup_{x_1,x_2\in \X}\|\hat{\phi}_{\omega}(\cdot;x_1,x_2)-\phi_{\omega}\|_{\infty}^{2m}\bigg)^{1/2}
\end{align*}
yielding the second assertion in Lemma \ref{rank_biasbound}.

\end{document}